\def\sqr#1#2{{\vcenter{\vbox{\hrule height.#2pt
              \hbox{\vrule width.#2pt height#1pt \kern#1pt \vrule width.#2pt}
              \hrule height.#2pt}}}}
\crefname{section}{section}{sections}
\crefname{subsection}{subsection}{subsections}
\Crefname{section}{Section}{Sections}
\Crefname{subsection}{Subsection}{Subsections}
\crefname{condition}{Condition}{Conditions}
\Crefname{figure}{Figure}{Figures}
\newtheorem {theorem}{Theorem}[section]
\newtheorem {lemma}[theorem]{{\bf Lemma}}
\newtheorem {proposition}[theorem]{{\bf Proposition}}
\theoremstyle{remark}
\newtheorem {remark}{{\bf Remark}}[section]
\theoremstyle{definition}
\newtheorem {definition}{{\bf Definition}}[section]
\theoremstyle{plain} \numberwithin {equation}{section}
\newtheorem{assumption}{Assumption}
\numberwithin{assumption}{section}
\def\deq{\mathop{\buildrel\Delta\over=}}
\begin{document}

\title{Null controllability for stochastic coupled systems of   fourth order parabolic equations}
\author{Yu Wang\footnote{School of Mathematics, Southwest Jiaotong University, Chengdu, P. R. China.
Email: yuwangmath@163.com.}
}
\date{}

\maketitle

\begin{abstract}
	This paper aims to establish null controllability for systems coupled by two backward fourth order stochastic parabolic equations.
    The main goal is to control both equations with only one control act on the drift term.
    To achieve this, we develop a new global Carleman estimate for fourth order stochastic parabolic equations, which allows us to deduce a suitable observability inequality for the adjoint systems. 
\end{abstract}

\noindent\bf AMS Mathematics Subject Classification. 
\rm 93B05, 93B07.

\noindent{\bf Keywords}.  Null controllability, coupled systems, backward fourth order stochastic parabolic equation, observability, Carleman estimates.

\section{Introduction}

Let $T>0$ and $(\Omega, \mathcal{F}, \mathbf{F},   \mathbb{P})$ with $\mathbf   F=\{\mathcal{F}_{t}\}_{t \geq 0}$ be a complete filtered probability space on which a   one-dimensional standard Brownian motion   $\{W(t)\}_{t \geq 0}$ is defined and   $\mathbf{F}$ is the natural filtration generated   by $W(\cdot)$, augmented by all the $\mathbb{P}$   null sets in $\mathcal{F}$.
Denote by $ \mathbb{F} $    the progressive $\sigma$-field with respect   to $\mathbf{F}$.
Let $H$ be a Banach space.
Write  $L^2_{\mathcal{F}_t}(\Omega;H)$ for the  space of all $\mathcal{F}_t$-measurable random   variables $\xi$ such that   $\mathbb{E}|\xi|_H^2<\infty$;    $L_{\mathbb{F}}^{2}(0, T ; H)$ for the space   consisting of all $H$-valued $\mathbf F$-adapted   processes $X(\cdot)$ such that   $\mathbb{E}\bigl(|X(\cdot)|_{L^{2}(0, T ;   H)}^{2}\bigr)<\infty$;    $L_{\mathbb{F}}^{\infty}(0, T ; H)$ for  the space   consisting of all $H$-valued $\mathbf F$-adapted   bounded processes.
All these spaces are  Banach spaces with the canonical norms (e.g.,   \cite[Section 2.6]{Lue2021a}).

Let $ G \subset \mathbb{R}^{n} (n \in \mathbb{N}) $ be a bounded open set with a $ C^{4} $ boundary $ \Gamma $. 
Let $ G_{0} \subset G $ be a nonempty subset. 
Set $Q=(0, T) \times G$ and $   \Sigma= (0, T) \times \Gamma$. Denote by $\nu(x) = (\nu^1(x), \cdots, \nu^n(x))$ the unit outward normal vector of $\Gamma$ at point $x$. 
As usual, $ \chi_{G_{0}} $   denotes the characteristic function of $ G_{0} $.

Consider the following coupled fourth order backward stochastic parabolic equations:
\begin{align}\label{eqEquations}
    \left\{\begin{aligned} 
        & d y_{1} -\Delta^{2} y_{1} d t  =  ( a_{1} y_{1} + a_{2} y_{2} + a_{3} Y_{1}) d t + Y_{1} d W(t)   &&  \text {in } Q, \\
        & d y_{2} -\Delta^{2} y_{2} d t  =  ( b_{1} y_{1} + b_{2} y_{2} + b_{3} Y_{2} + \chi_{G_{0}} u) d t + Y_{2} d W(t)   &&  \text {in } Q, \\
        & y_{1}= \frac{\partial y_{1} }{\partial \nu}  =0, \quad y_{2}= \frac{\partial y_{2} }{\partial \nu}  =0  &&\text {on } \Sigma, \\
        & y_{1}(T)  = y_{1}^{T}, \quad  y_{2}(T)  = y_{2}^{T} &&  \text {in } G,
        \end{aligned}\right.
\end{align}
where $ (y_{1}^{T}, y_{2}^{T}) \in \big[ L^{2}_{\mathcal{F}_{T}}(\Omega; L^{2}(G)  ) \big]^{2} $ is a terminal state, $ u   \in L_{\mathbb{F}}^{2} (0, T ; L^{2}(G_{0}) ) $ is a control and
\begin{align*}
    \left\{ 
       \begin{aligned}
        & a_{1}, a_{2} \in L_{\mathbb{F}}^{\infty} (0, T ; L^{\infty}(G) ),  \quad   \quad  a_3 \in L_{\mathbb{F}}^{\infty} (0, T ; W^{2, \infty}(G) ), \\
        & b_{1}, b_{2} \in L_{\mathbb{F}}^{\infty} (0, T ; L^{\infty}(G) ), \quad   \quad b_3 \in L_{\mathbb{F}}^{\infty} (0, T ; W^{2, \infty}(G) )
        .
       \end{aligned}
     \right.
\end{align*}
By the classical wellposedness result for backward stochastic evolution equations (e.g., \cite[Theorem 4.11]{Lue2021a}), we know that  \cref{eqEquations} admits a unique weak solution
\begin{align*}
    (y_{1}, y_{2}; Y_{1}, Y_{2})\in \big[ L^2_{\mathbb{F}}(\Omega;C([0,T];L^2(G))) \cap L^2_{\mathbb{F}}(0,T;H^2(G)\cap H_0^1(G)) \big]^{2} \times \big[L^2_{\mathbb{F}}(0,T;L^2(G))\big]^{2}
    .
\end{align*}

Now we give the definition of the null controllability for \cref{eqEquations}.
\begin{definition}
    \label{def.control}
    The system \cref{eqEquations} is called null controllable at a  time $ T > 0$ if for any
    given $ (y_{1}^{T}, y_{2}^{T}) \in \big[ L^{2}_{\mathcal{F}_{T}}(\Omega; L^{2}(G)  ) \big]^{2}$, 
    there exists a control $ u   \in L_{\mathbb{F}}^{2} (0, T ; L^{2}(G_{0}) ) $
    such that the solution $ (y_{1}, y_{2}; Y_{1}, Y_{2})$ to \cref{eqEquations} satisfies that $ (y_{1}(0 ), y_{2}(0)) = (0,0) $, $ \mathbb{P} $-a.s.
\end{definition}

Note that we only act one control in the second equation. To control the entire system, we expect that the action of $ y_{2} $ to $ y_{1} $ is sufficiently effective.
Hence, it is reasonable to make the following assumption.
\begin{assumption} \label{as1}
    There exists a nonempty open set $ G_{1} \subset G_{0} $ and a constant $ \sigma > 0 $ such that $ a_{2}(x, t) \geq \sigma $ or $ a_{2}(x, t) \leq -  \sigma $, a.e. $ (x, t) \in G_{1} \times (0,T) $, $ \mathbb{P} $-a.s.
\end{assumption}

The main result of this paper is the following.

\begin{theorem} \label{thmControlResult}
    Under the \cref{as1}, system \cref{eqEquations} is null controllable at any time $ T > 0 $.
\end{theorem}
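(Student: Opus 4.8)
\emph{Step 1: duality.} The plan is to reduce null controllability to an observability inequality for the adjoint system, prove the latter by a global Carleman estimate for the scalar fourth order stochastic parabolic operator together with \cref{as1}, and then remove the Carleman weights by an energy argument. By the transposition formula between forward and backward stochastic evolution equations (as in \cite{Lue2021a}), the adjoint of \cref{eqEquations} is the \emph{forward} coupled system
\begin{align}\label{eqAdjoint}
    \left\{\begin{aligned}
        & d\zeta_{1} + \Delta^{2}\zeta_{1}\,dt = -( a_{1}\zeta_{1} + b_{1}\zeta_{2})\,dt - a_{3}\zeta_{1}\,dW(t) && \text{in }Q,\\
        & d\zeta_{2} + \Delta^{2}\zeta_{2}\,dt = -( a_{2}\zeta_{1} + b_{2}\zeta_{2})\,dt - b_{3}\zeta_{2}\,dW(t) && \text{in }Q,\\
        & \zeta_{1} = \frac{\partial\zeta_{1}}{\partial\nu} = 0,\quad \zeta_{2} = \frac{\partial\zeta_{2}}{\partial\nu} = 0 && \text{on }\Sigma,\\
        & \zeta_{1}(0) = \zeta_{1}^{0},\quad \zeta_{2}(0) = \zeta_{2}^{0} && \text{in }G,
    \end{aligned}\right.
\end{align}
which is well posed forward since $-\Delta^{2}$ is dissipative. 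Pairing a solution of \cref{eqEquations} with a solution of \cref{eqAdjoint} and using the clamped boundary conditions, every coupling term cancels except $\mathbb{E}\int_{0}^{T}\!\int_{G_{0}}u\,\zeta_{2}\,dx\,dt$, and one checks by the Hilbert Uniqueness Method (or a penalization argument) that \cref{thmControlResult} is equivalent to the observability inequality
\begin{align}\label{eqObs}
    \mathbb{E}\,|\zeta_{1}(T)|_{L^{2}(G)}^{2} + \mathbb{E}\,|\zeta_{2}(T)|_{L^{2}(G)}^{2} \;\le\; C\,\mathbb{E}\int_{0}^{T}\!\!\int_{G_{0}}|\zeta_{2}|^{2}\,dx\,dt ,
\end{align}
with $C$ independent of the data, for every solution $(\zeta_{1},\zeta_{2})$ of \cref{eqAdjoint}. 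Note that only $\zeta_{2}$ is observed, so \cref{as1} will be needed to bring $\zeta_{1}$ under control.

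\emph{Step 2: the Carleman estimate.} To prove \cref{eqObs} I would first establish a global Carleman estimate for the scalar equation $d\phi + \Delta^{2}\phi\,dt = f\,dt + g\,dW$ with $\phi = \partial_{\nu}\phi = 0$ on $\Sigma$. Fix $\psi\in C^{4}(\overline{G})$ with $\psi>0$ in $G$, $\psi=0$ and $\partial_{\nu}\psi\le 0$ on $\Gamma$, and $|\nabla\psi|\ge c_{0}>0$ on $\overline{G\setminus G_{1}}$ (such $\psi$ exists by the Fursikov--Imanuvilov construction), and set $\varphi=e^{\mu\psi}/(t(T-t))$, $\theta=\exp\!\big(\lambda(e^{\mu\psi}-e^{2\mu\|\psi\|_{\infty}})/(t(T-t))\big)$. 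The target, for all large $\lambda,\mu$, is
\begin{align*}
    \mathcal{I}(\phi) &:= \mathbb{E}\!\int_{Q}\!\theta^{2}\big(\lambda^{7}\mu^{8}\varphi^{7}\phi^{2}+\lambda^{5}\mu^{6}\varphi^{5}|\nabla\phi|^{2}+\lambda^{3}\mu^{4}\varphi^{3}|\Delta\phi|^{2}+\lambda\mu^{2}\varphi|\nabla\Delta\phi|^{2}\big)\,dx\,dt\\
    &\le C\Big(\mathbb{E}\!\int_{Q}\!\theta^{2}f^{2}\,dx\,dt + \lambda^{4}\mu^{4}\,\mathbb{E}\!\int_{Q}\!\theta^{2}\varphi^{4}g^{2}\,dx\,dt + \mathbb{E}\!\int_{0}^{T}\!\!\int_{G_{1}}\!\theta^{2}\lambda^{7}\mu^{8}\varphi^{7}\phi^{2}\,dx\,dt\Big)
\end{align*}
(the exponents are schematic). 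This would follow by conjugating with $\theta$, writing the pointwise identity for $\theta\,\Delta^{2}(\theta^{-1}v)$ as a self-adjoint principal operator plus commutators, applying It\^o's formula in time (which produces the $g^{2}$-correction term and explains the hypothesis $a_{3},b_{3}\in W^{2,\infty}(G)$, since the $dW$-coefficient gets differentiated twice in space), integrating over $Q$, and disposing of the boundary integrals on $\Sigma$ using $\partial_{\nu}\psi\le0$ and the clamped condition. Applying this estimate to $\zeta_{1}$ and to $\zeta_{2}$ in \cref{eqAdjoint}, and taking $\lambda,\mu$ large to absorb the zeroth order contributions of $a_{1},a_{2},b_{1},b_{2},a_{3},b_{3}$ into $\mathcal{I}(\zeta_{1})+\mathcal{I}(\zeta_{2})$, gives
\begin{align*}
    \mathcal{I}(\zeta_{1}) + \mathcal{I}(\zeta_{2}) \le C\Big(\mathbb{E}\!\int_{0}^{T}\!\!\int_{G_{1}}\!\theta^{2}\lambda^{7}\mu^{8}\varphi^{7}\zeta_{1}^{2}\,dx\,dt + \mathbb{E}\!\int_{0}^{T}\!\!\int_{G_{0}}\!\theta^{2}\lambda^{7}\mu^{8}\varphi^{7}\zeta_{2}^{2}\,dx\,dt\Big).
\end{align*}

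\emph{Step 3: absorbing the $\zeta_{1}$-local term (the crux), and conclusion.} Since only $\zeta_{2}$ is observed, the term $\mathbb{E}\int_{0}^{T}\!\int_{G_{1}}\theta^{2}\lambda^{7}\mu^{8}\varphi^{7}\zeta_{1}^{2}$ must be removed using \cref{as1}. Assume, without loss of generality, $a_{2}\ge\sigma$ on $G_{1}$ (the case $a_{2}\le-\sigma$ is identical), pick $\xi\in C_{c}^{\infty}(G_{1})$ with $0\le\xi\le1$ and $\xi\equiv1$ where the local term is supported, and bound it by $\tfrac1\sigma\,\mathbb{E}\int_{Q}\theta^{2}\lambda^{7}\mu^{8}\varphi^{7}\xi\,\zeta_{1}\,(a_{2}\zeta_{1})\,dx\,dt$. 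Now substitute $a_{2}\zeta_{1}\,dt=-d\zeta_{2}-\Delta^{2}\zeta_{2}\,dt-b_{2}\zeta_{2}\,dt-b_{3}\zeta_{2}\,dW$ from the second equation of \cref{eqAdjoint}; the multiplier $\theta^{2}\lambda^{7}\mu^{8}\varphi^{7}\xi\,\zeta_{1}$ contains no coefficient, so in all subsequent integrations by parts $a_{1},a_{2},b_{1},b_{2}$ are never differentiated (consistent with their mere $L^{\infty}$ regularity). The stochastic integrals have zero expectation; the $d\zeta_{2}$-term is integrated by parts in time via It\^o's product formula, with vanishing boundary contributions because $\theta(0)=\theta(T)=0$, substituting $d\zeta_{1}$ from the first equation; and the fourth order term $\Delta^{2}\zeta_{2}$ is transferred onto the multiplier by spatial integration by parts, the resulting $\Delta^{2}\zeta_{1}$ and $\Delta^{2}\zeta_{2}$ being eliminated by reinserting the two equations so that the genuinely singular pieces cancel and every survivor is either lower order or local in $\zeta_{2}$. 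After Young's inequality, arranged so that each weight stays below the top powers of $\mathcal{I}(\zeta_{1})+\mathcal{I}(\zeta_{2})$, together with a local Caccioppoli-type step converting local weighted norms of $\Delta\zeta_{2},\nabla\Delta\zeta_{2}$ on $G_{1}$ into local norms of $\zeta_{2}$ on $G_{0}$, this yields $\mathcal{I}(\zeta_{1})+\mathcal{I}(\zeta_{2})\le C\,\mathbb{E}\int_{0}^{T}\!\int_{G_{0}}\theta^{2}\lambda^{7}\mu^{8}\varphi^{7}\zeta_{2}^{2}$. Finally fix $\lambda,\mu$: on $[T/4,3T/4]$ the weight $\theta^{2}\varphi^{7}$ is bounded above and below, so the left side dominates $\mathbb{E}\int_{T/4}^{3T/4}\!\int_{G}(\zeta_{1}^{2}+\zeta_{2}^{2})$ and the right side is $\le C\,\mathbb{E}\int_{0}^{T}\!\int_{G_{0}}\zeta_{2}^{2}$; an energy estimate for \cref{eqAdjoint} (It\^o's formula for $\mathbb{E}(|\zeta_{1}(t)|_{L^{2}(G)}^{2}+|\zeta_{2}(t)|_{L^{2}(G)}^{2})$, in which the fourth order terms have the favourable sign and the rest is $\le C\,\mathbb{E}(|\zeta_{1}|_{L^{2}(G)}^{2}+|\zeta_{2}|_{L^{2}(G)}^{2})$) plus Gronwall gives $\mathbb{E}(|\zeta_{1}(T)|_{L^{2}(G)}^{2}+|\zeta_{2}(T)|_{L^{2}(G)}^{2})\le C\,\mathbb{E}(|\zeta_{1}(t_{0})|_{L^{2}(G)}^{2}+|\zeta_{2}(t_{0})|_{L^{2}(G)}^{2})$ for $t_{0}\in[T/4,3T/4]$; integrating in $t_{0}$ and chaining the bounds gives \cref{eqObs}, hence \cref{thmControlResult}. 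I expect Step 3 — pushing the fourth order term $\Delta^{2}\zeta_{2}$ through the coupling while keeping the polynomial weights within the range controlled by the Carleman estimate and without ever differentiating $a_{2}$ — together with the construction of the scalar stochastic fourth order Carleman estimate itself, to be the main obstacles.
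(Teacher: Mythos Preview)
Your overall architecture—duality to observability, scalar Carleman applied to each component, elimination of the local $\zeta_1$ term via the coupling, then energy/Gronwall—is exactly the paper's, and Steps 1, 2 and the final passage from the weighted estimate to \cref{eqObs} match closely. (The paper's Carleman, \cref{thm.carlemanForwardPre}, carries $|\nabla g|^2$ and $|\nabla^2 g|^2$ on the right rather than just $g^2$; you correctly anticipate this when explaining why $a_3,b_3\in W^{2,\infty}$ is needed.)

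The substantive difference is in Step 3. The paper does \emph{not} use the direct substitution $a_2\zeta_1\,dt=-d\zeta_2-\Delta^2\zeta_2\,dt-\dots$ that you describe. Instead (Proposition~\ref{propTwotoOne}) it applies It\^o's formula to a full weighted quadratic form
\[
\mathcal{P}=e^{k\tau\alpha}\rho^{4/3}\zeta_1^2+\beta e^{2\tau\alpha}\rho\,\zeta_1\zeta_2+e^{l\tau\alpha}\rho^{2/3}\zeta_2^2,
\]
with \emph{three distinct} exponential Carleman exponents. The diagonal terms $\zeta_1^2$, $\zeta_2^2$ make the fourth order contributions assemble into negative $-|\Delta\zeta_i|^2$ terms that are simply dropped, rather than split; and the distinct exponents ensure every junk term carries an extra factor $e^{c\tau\alpha}$ with $c>0$, which Lemma~\ref{lmS} collapses to $\le 1$ for large $\tau$, so no delicate power-counting is required.

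Your route—It\^o on $w\zeta_1\zeta_2$ alone—can also be made to work, but the mechanism you describe is off. After It\^o on $w\zeta_1\zeta_2$ both equations have already been used, and the surviving fourth order terms are $-\int w\zeta_1\Delta^2\zeta_2-\int w\zeta_2\Delta^2\zeta_1$. If you integrate by parts four times you obtain $-2\int w\zeta_2\Delta^2\zeta_1+\text{l.o.t.}$, and ``reinserting the first equation'' just reproduces the It\^o identity you started from: there is no cancellation of singular pieces. What actually closes is integrating by parts \emph{twice} to reach $-2\int w\,\Delta\zeta_1\Delta\zeta_2+\text{l.o.t.}$, then splitting by Young so that $\epsilon\int_Q\theta^2\xi^3|\Delta\zeta_1|^2$ is absorbed by the Carleman left-hand side while the local piece $C_\epsilon\int_{G_0}\theta^2\xi^{11}|\Delta\zeta_2|^2$ is downgraded to $\int_{G_0}\theta^2\xi^{N}\zeta_2^2$ by a localized It\^o/energy argument (as in the paper's \cref{eq.pfthm2.1.S33}--\cref{eq.pfthm2.1.S39}). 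This is workable but heavier in bookkeeping than the paper's $\mathcal{P}$-trick, and the local energy step for $\Delta\zeta_2$ feeds back an $a_2\zeta_1\zeta_2$ term that must itself be split against the global $s^6\xi^6\zeta_1^2$; the powers do line up, but only after several iterations.
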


\begin{remark}
    From the proof of \cref{thmObservabilityEstimateI},  it is not difficult to see that  \cref{thmControlResult}  still holds when two equations of \cref{eqEquations} contains $ \nabla y_{1} $,  $ \nabla^{2} y_{1} $, $ \nabla \Delta y_{1} $ and $ Y_{1} $ in the drift terms. 
    Indeed, for instance, by adding $ Y_{1} $ to the drift term of the second equation in \cref{eqEquations}, direct computation reveals that \cref{prop1} and \cref{propTwotoOne} still hold. Therefore, \cref{thmControlResult} remains valid.
    We do not introduce these  terms in the  system  \cref{eqEquations} for the simplicity of notations. 
\end{remark}

\begin{remark}
    Using the Carleman inequality \cref{eq.carlemanEstimate} and a proof similar to \cref{thmObservabilityEstimateI}, we can establish the controllability of the backward fourth order stochastic parabolic equation.
    That is, for any $ y_{T} \in  L^{2}_{\mathcal{F}_{T}}(\Omega; L^{2}(G)  ) $, there exists a control $ u  \in L_{\mathbb{F}}^{2} (0, T ; L^{2}(G_{0}) )  $ such that the solution $ y $ of equation
    \begin{equation*} 
        \left\{\begin{alignedat}{2}
            &d y-\Delta^2 y d t=(b_{2} y+b_{3} Y+\chi_{G_{0}} u) d t+Y d W(t) && \quad \text { in } Q, \\
            &y=  \frac{\partial y}{\partial \nu}=0 &&\quad \text { on } \Sigma, \\
            &y(T)=y_{T} &&\quad \text { in } G.
        \end{alignedat}\right.
    \end{equation*}
    satisfying $ y(0) =0 $, $ \mathbb{P} $-a.s.
\end{remark}

\begin{remark}
    It would be quite interesting to study whether the system \cref{eqEquations} remains controllable if they are coupled through diffusion terms, i.e., by adding $ Y_{2} $ in the drift term of the first equation and assuming $ a_{2} \equiv 0 $. 
    We currently do not know how to prove it.
\end{remark}

To prove the null controllability of \cref{eqEquations}, we introduce the adjoint equation of \cref{eqEquations}:
\begin{align}\label{eqEquationsAdj}
    \left\{\begin{aligned} 
        & d z_{1} +\Delta^{2} z_{1} d t  =  - ( a_{1} z_{1} + b_{1} z_{2} ) d t  - a_{3} z_{1} d W(t)   &&  \text {in } Q, \\
        & d z_{2} + \Delta^{2} z_{2} d t  =  -  ( a_{2} z_{1} + b_{2} z_{2}  ) d t - b_{3} z_{2} d W(t)   &&  \text {in } Q, \\
        & z_{1}= \frac{\partial z_{1} }{\partial \nu} =0, \quad z_{2}= \frac{\partial z_{2} }{\partial \nu}  =0  &&\text {on } \Sigma, \\
        & z_{1}(0)  = z_{1}^{0}, \quad  z_{2}(0)  = z_{2}^{0} &&  \text {in } G.
        \end{aligned}\right.
\end{align}
By the classical wellposedness result for stochastic evolution equations (e.g., \cite[Theorem 3.24]{Lue2021a}), we know that \cref{eqEquationsAdj}  admits a unique weak solution
\begin{align*}
    (z_{1}, z_{2}) \in \big[L^2_{\mathbb{F}}(\Omega;C([0,T];L^2(G)))\cap L^2_{\mathbb{F}}(0,T;H^2(G)\cap H_0^1(G)) \big]^{2}.
\end{align*}

By a standard duality argument, \Cref{thmControlResult} is implied by the following observability estimate (e.g., \cite[Theorem 7.17]{Lue2021a}).
\begin{theorem} \label{thmObservabilityEstimateI}
    There exists a constant $ C > 0 $ such that the solution $ (z_{1}, z_{2}) $ to system \cref{eqEquationsAdj} satisfies
    \begin{align*}
        |(z_{1}(T), z_{2}(T))|_{L^{2}_{\mathcal{F}_{T}}(\Omega; L^{2}(G) \times L^{2}(G))} 
        \leq 
        C |z_{2}|_{L^{2}_{\mathbb{F}}(0,T; L^{2}(G_{0}))}
        .
    \end{align*}
\end{theorem}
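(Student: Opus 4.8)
The plan is to follow the standard duality-and-Carleman route for null controllability of coupled parabolic systems by one control, adapted to the stochastic fourth-order setting; since the duality reduction is already recalled above, it suffices to prove the observability estimate of \cref{thmObservabilityEstimateI}. The essential new input is the global Carleman estimate \cref{eq.carlemanEstimate} for equations of the form $d\cdot+\Delta^{2}\cdot\,dt$, which I will apply separately to each of the two equations of the adjoint system \cref{eqEquationsAdj}. Write $\theta$ for the Carleman weight and $\varphi$ for the associated blow-up function entering \cref{eq.carlemanEstimate}; recall that $\theta$ — and hence $\theta$ times any power of $\varphi$ — tends to $0$ as $t\to0^{+}$ and $t\to T^{-}$, while $\varphi$ is bounded below on $\overline Q$ by a positive constant. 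Fix nonempty open sets $G_{1}'\Subset G_{1}$ and a cutoff $\psi\in C_{c}^{\infty}(G_{1})$ with $0\le\psi\le1$ and $\psi\equiv1$ on $G_{1}'$. Throughout, "absorb" means: take the large parameters of \cref{eq.carlemanEstimate} large and use the Cauchy-Schwarz and Young inequalities.

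First I would apply \cref{eq.carlemanEstimate} to the $z_{2}$-equation of \cref{eqEquationsAdj}, with drift source $-(a_{2}z_{1}+b_{2}z_{2})$, diffusion $-b_{3}z_{2}$ and observation set $G_{0}$; the regularity $b_{2}\in L^{\infty}_{\mathbb F}(0,T;L^{\infty}(G))$, $b_{3}\in L^{\infty}_{\mathbb F}(0,T;W^{2,\infty}(G))$ allows one to absorb the terms generated by $z_{2}$ itself, leaving a weighted energy of $z_{2}$ bounded by $\mathbb E\iint_{Q}\theta^{2}|z_{1}|^{2}$ plus a local term on $(0,T)\times G_{0}$. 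Symmetrically, applying \cref{eq.carlemanEstimate} to the $z_{1}$-equation (source $-(a_{1}z_{1}+b_{1}z_{2})$, diffusion $-a_{3}z_{1}$, observation set $G_{1}'$) bounds a weighted energy of $z_{1}$ by $\mathbb E\iint_{Q}\theta^{2}|z_{2}|^{2}$ plus a local term on $(0,T)\times G_{1}'$. The core step — essentially \cref{propTwotoOne} — removes this $z_{1}$ local term using \cref{as1}. Assuming without loss of generality $a_{2}\ge\sigma$ on $G_{1}\times(0,T)$, one has $\chi_{G_{1}'}|z_{1}|^{2}\le\sigma^{-1}\chi_{G_{1}'}z_{1}(a_{2}z_{1})$, so inserting $\psi$ bounds the $z_{1}$ local term by $\sigma^{-1}\mathbb E\iint_{Q}\psi\,\theta^{2}(\cdots)\,z_{1}(a_{2}z_{1})\,dx\,dt$, and I would then substitute $a_{2}z_{1}\,dt=-dz_{2}-\Delta^{2}z_{2}\,dt-b_{2}z_{2}\,dt-b_{3}z_{2}\,dW$ from the $z_{2}$-equation. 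The $dW$-term has zero expectation; in the $dz_{2}$-term an It\^o integration by parts in $t$ transfers the differential onto $\psi\theta^{2}(\cdots)z_{1}$ — the time-boundary contributions vanish since $\theta$ suppresses the $\varphi$-factors at $t=0,T$ — and, via the $z_{1}$-equation, produces a $\Delta^{2}z_{1}$ term; in the $\Delta^{2}z_{2}$-term, four spatial integrations by parts — with no boundary term on $\Sigma$ because $\psi$ is compactly supported in $G$ — move $\Delta^{2}$ onto $\psi\theta^{2}(\cdots)z_{1}$. Expanding by the Leibniz rule and using Young's inequality, each resulting term is a product of a factor from $z_{2}$ with a factor from $z_{1}$ (or a pure $|z_{2}|^{2}$ term); splitting it suitably, the $z_{1}$-part is absorbed into the left-hand side of the $z_{1}$-Carleman estimate and the $z_{2}$-part becomes a purely local term $\mathbb E\iint_{(0,T)\times G_{1}}\theta^{2}(\cdots)|z_{2}|^{2}$, which is harmless because $G_{1}\subset G_{0}$ and the Carleman parameters are only frozen at the end, so even a large power of them does no damage.

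To finish, I would insert this into the $z_{1}$-Carleman estimate, absorb the $z_{1}$-terms it produced, and add the result to the $z_{2}$-Carleman estimate; the two global cross terms $\mathbb E\iint_{Q}\theta^{2}|z_{1}|^{2}$ and $\mathbb E\iint_{Q}\theta^{2}|z_{2}|^{2}$ absorb one another for the parameters large, since they carry strictly lower powers of $\varphi$ than the dominant left-hand side terms. This gives, for a fixed admissible choice of parameters (essentially \cref{prop1}),
\[
\mathbb E\!\iint_{Q}\theta^{2}\Lambda_{1}|z_{1}|^{2}\,dx\,dt+\mathbb E\!\iint_{Q}\theta^{2}\Lambda_{2}|z_{2}|^{2}\,dx\,dt\le C\,\mathbb E\!\iint_{(0,T)\times G_{0}}\theta^{2}\Lambda_{0}|z_{2}|^{2}\,dx\,dt
\]
with strictly positive weights $\Lambda_{0},\Lambda_{1},\Lambda_{2}$. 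Since $\theta^{2}\Lambda_{0}$ is bounded on $\overline Q$, the right-hand side is $\le C\,|z_{2}|_{L^{2}_{\mathbb F}(0,T;L^{2}(G_{0}))}^{2}$; restricting the left-hand integrals to $t\in[T/4,3T/4]$, where $\theta^{2}\Lambda_{1}$ and $\theta^{2}\Lambda_{2}$ are bounded below by a positive constant, bounds the left-hand side from below by $c\int_{T/4}^{3T/4}\mathbb E\big(|z_{1}(t)|_{L^{2}(G)}^{2}+|z_{2}(t)|_{L^{2}(G)}^{2}\big)\,dt$. Finally, It\^o's formula applied to $|z_{1}(t)|_{L^{2}(G)}^{2}+|z_{2}(t)|_{L^{2}(G)}^{2}$ — together with the dissipativity of $\Delta^{2}$ under the boundary conditions of \cref{eqEquationsAdj} and the boundedness of all coefficients — gives $\mathbb E\big(|z_{1}(T)|_{L^{2}(G)}^{2}+|z_{2}(T)|_{L^{2}(G)}^{2}\big)\le e^{C(T-t)}\,\mathbb E\big(|z_{1}(t)|_{L^{2}(G)}^{2}+|z_{2}(t)|_{L^{2}(G)}^{2}\big)$ for all $t\in[0,T]$; integrating over $t\in[T/4,3T/4]$ and chaining with the two previous estimates yields \cref{thmObservabilityEstimateI}, and hence \cref{thmControlResult}.

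I expect the two-to-one step to be the main obstacle: one must run the integrations by parts against the genuinely singular Carleman weight of the fourth-order operator and check that, after the Leibniz expansion — four spatial derivatives and one time derivative distributed over $\psi\theta^{2}(\cdots)z_{1}$ — every $z_{1}$-term produced still carries a weight controlled by the left-hand side of the $z_{1}$-Carleman estimate. This is precisely where the fine power structure of \cref{eq.carlemanEstimate} must be right, and where the low regularity of $a_{2}$ (merely bounded, so that no derivative may land on it) forces the smooth cutoff $\psi$ to carry the entire localization.
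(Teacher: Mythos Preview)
Your overall architecture matches the paper's: apply the Carleman estimate \cref{eq.carlemanEstimate} to each equation, eliminate the local $z_{1}$ term using \cref{as1} and the coupling, then pass from the weighted inequality to the observability estimate via restriction to $[T/4,3T/4]$ and an energy/Gr\"onwall argument. The last two steps are exactly as in the paper.

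The two-to-one step, however, is where your sketch and the paper diverge, and your version has a gap. You claim that after substituting $a_{2}z_{1}\,dt=-dz_{2}-\Delta^{2}z_{2}\,dt-\cdots$ and integrating by parts, ``the $z_{2}$-part becomes a purely local term $|z_{2}|^{2}$''. This is not automatic here. Moving the four spatial derivatives of $\Delta^{2}z_{2}$ entirely onto $\psi\theta^{2}(\cdots)z_{1}$ produces, as leading term, $\psi\theta^{2}(\cdots)\Delta^{2}z_{1}\cdot z_{2}$; the It\^o integration of the $dz_{2}$ term produces the same thing with the same sign (no cancellation). But $|\Delta^{2}z_{1}|^{2}$ is \emph{not} on the left of \cref{eq.carlemanEstimate}: the top order available is $\lambda|\nabla\Delta z_{1}|^{2}$, with no power of $s$. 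So you cannot absorb this term as stated. If instead you stop after two integrations by parts you get $\psi\theta^{2}(\cdots)\Delta z_{1}\,\Delta z_{2}$; Young's inequality then leaves a local $|\Delta z_{2}|^{2}$ term on $G_{1}$ with weight of order $s^{11}\xi^{11}\theta^{2}$, which must itself be reduced to a local $|z_{2}|^{2}$ term by a further It\^o computation on a cutoff times $z_{2}^{2}$ (in the spirit of \cref{eq.pfthm2.1.S33}--\cref{eq.pfthm2.1.S39}). That reduction can be carried out, but it is a genuine extra step that your proposal suppresses; the sentence you flag in your last paragraph is exactly the one that needs it.

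The paper sidesteps this difficulty by a different device in \cref{propTwotoOne}. Rather than substituting into the local term with the Carleman weight, it first trades the polynomial factor $s^{7}\xi^{7}$ for a slightly smaller exponential $e^{\frac{5}{3}s\alpha}$ via \cref{lmS}, and then computes $d\mathcal{P}$ for the quadratic form $\mathcal{P}=e^{k\tau\alpha}\rho^{4/3}z_{1}^{2}+\beta e^{2\tau\alpha}\rho\,z_{1}z_{2}+e^{l\tau\alpha}\rho^{2/3}z_{2}^{2}$ with carefully chosen exponents $k>2>r$, $l>r$, $k+l<4$. The point of including the diagonal pieces $z_{i}^{2}$ is that the terms $z_{i}\Delta^{2}z_{i}$ produce, after two integrations by parts, \emph{good-sign} $-|\Delta z_{i}|^{2}$ contributions (see \cref{eqObEstimatePf3}, \cref{eqObEstimatePf4}); these absorb the cross term $\Delta z_{1}\,\Delta z_{2}$ coming from $z_{1}\Delta^{2}z_{2}+z_{2}\Delta^{2}z_{1}$ (\cref{eqObEstimatePfI4eq7}) without ever invoking the higher-order left-hand side of \cref{eq.carlemanEstimate}. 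The exponential slack from \cref{lmS} replaces all power-of-$s$ bookkeeping. Your direct substitution approach is closer to the deterministic second-order argument of Guerrero; it should ultimately work here too, but only after the additional local reduction for $|\Delta z_{2}|^{2}$, whereas the paper's quadratic-form argument is self-contained.
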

Here and in what follows,
we denote by $C$  a generic positive constant depending on $G$, $G_{0}$, $T$ and $ a_{i}, b_{i} $, $ i=1, 2, 3 $, whose value may vary from line to line.

A plenty  of works on controllability and observability estimates for coupled deterministic parabolic systems  exist in the literature, including works such as \cite{Guerrero2007,Lissy2019,Duprez2016,Carreno2016,Steeves2019,Steeves2019a} and the references therein.
The main issue, compared to the single-equation cases, is that one wants to control the system with fewer controls than equations by indirectly affecting the equations where no control term appears, thanks to the coupling terms in the system.
Hence, it is more difficult to obtain the controllability for coupled systems than for the single equations.
It is a fundamental problem since, in some complex systems, only a few quantities can be effectively controlled.
Additional details regarding this specific subject can be found in the survey article \cite{AmmarKhodja2011}.

As far as we know, there are only a few works studying controllability problems for coupled stochastic systems with fewer controls than equations, see, \cite{li2012,HernandezSantamaria2022,Liu2018,Liu2014a}. 
In particular, in \cite{HernandezSantamaria2022}, the authors obtain null controllability for one dimensional linear backward stochastic systems coupling fourth- and second-order parabolic equations. 

In order to prove \cref{thmObservabilityEstimateI}, we will use Carleman estimates, which is derived taking inspiration from the literature \cite{Gao2015,Guerrero2019,Liu2014,Tang2009}.
To state the Carleman estimate, we need to introduce the weight functions:
\begin{equation}\label{eq.alpha}
    \alpha(x, t)=\frac{e^{\lambda(|2
    \eta|_{C(\overline{G})}+\eta(x))}-e^{4
    \lambda|\eta|_{C(\overline{G})}}}{t^{1/2}(T-t)^{1/2}},
    \quad \xi(x, t)=\frac{e^{\lambda(|2
    \eta|_{C(\overline{G})}+\eta(x))}}{t^{1/2}(T-t)^{1/2}}.
\end{equation}
Let $ G_{i} $  be three nonempty open sets such that $ \overline{G_{i}} \subset G_{i-1} $  and denote by $ Q_{i} = (0,T) \times G_{i} $ for $ i = 1,2,3 $.
Let the function $ \eta (x) $ satisfy
\begin{align}\label{eq.eta} 
    \eta \in C^{4}(\overline{G}),\quad  \eta >0
\text{ in } G, \quad \eta|_{{\Gamma}}=0,
\quad|\nabla \eta| \geq C>0 \text { in }
\overline{ G \backslash G_{3}} 
\end{align}
For the existence of $ \eta $, see \cite[Lemma 1.1]{Fursikov1996}.
We remark that our weight functions are the same as that in \cite{Wang}.

We have the following Carleman estimate.
\begin{theorem}\label{thm.carlemanForwardPre}
	For any $ T > 0 $, there are positive constants $ C  , \lambda_{0}  $ and $ s_{0} = s_{0} (T)   $,  such that for all $ \lambda \geq \lambda_{0} $,  $s \geq s_{0}$  and
	$ \varphi  \in L^{2}_{\mathbb{F}}(0,T;H^{2}(G)),    f\in L^{2}_{\mathbb{F}}(0,T;L^{2}(G)),  g \in L^{2}_{\mathbb{F}}(0,T;H^{2}(G)) $
	satisfying
	\begin{equation}\label{eq.carlemanForwardEquation}
		\left\{
		\begin{alignedat}{2}
			&d \varphi +\Delta^2 \varphi  d t=f d t + g   d W(t) && \quad \text { in } Q, \\
			&\varphi = \frac{\partial \varphi }{\partial \nu} =0 && \quad \text { on } \Sigma,
		\end{alignedat}
		\right.
	\end{equation}
	it holds that 
	\begin{align}\label{eq.carlemanEstimate}
		\notag
		& 
        \mathbb{E} \int_{Q} e^{2 s \alpha}(
            s^{6} \lambda^{8} \xi^{6} | \varphi |^{2}
        +s^{4} \lambda^{6} \xi^{4} |\nabla \varphi |^{2} 
        +s^{3} \lambda^{4}    \xi^{3} |\Delta \varphi |^{2} 
        + s^{2}  \lambda^{4}  \xi^{2} | \nabla^{2} \varphi  |^{2}   
        +   \lambda    | \nabla \Delta \varphi  |^{2}   
        ) d x d t
        \\
        &
        \leq C   \Big( \mathbb{E} \int_{Q_{2}}  s^{7}  \lambda^{8} \xi^{7}  e^{2 s \alpha} |\varphi|^{2} d x d t
        +   \mathbb{E} \int_{Q} e^{2 s \alpha}  
        ( 
            |f|^{2} 
            + s^{5} \lambda^{5} \xi^{5} |g| ^{2}
            + s^{3} \lambda^{3} \xi^{3}|\nabla g|  ^{2}
            + s \lambda \xi |\nabla^{2} g| ^{2}
        )d x d t
        \Big)
        .
	\end{align}
\end{theorem}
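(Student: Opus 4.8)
The plan is to prove \cref{eq.carlemanEstimate} by the weighted-multiplier (Fursikov--Imanuvilov-type) argument adapted to the stochastic fourth-order setting, following \cite{Gao2015,Liu2014,Tang2009}; by a routine density/regularization argument it suffices to treat $\varphi$ smooth enough that every weighted integral below is finite. Set $\theta:=e^{s\alpha}$, $\ell:=s\alpha$, $v:=\theta\varphi$. Since $\alpha(x,t)\to-\infty$ as $t\downarrow0$ and $t\uparrow T$, the process $v$ and its spatial derivatives (of all orders used below) extend by $0$ to $t\in\{0,T\}$, and $v=\partial_{\nu}v=0$ on $\Sigma$. Using that $\theta$ is nonrandom, \cref{eq.carlemanForwardEquation} gives
\begin{equation*}
    dv+\theta\Delta^{2}(\theta^{-1}v)\,dt=\big(\ell_{t}v+\theta f\big)\,dt+\theta g\,dW(t).
\end{equation*}
Expanding $\theta\Delta^{2}(\theta^{-1}v)=\Delta^{2}v+\mathcal{R}v$ — where, since $\nabla\ell=s\lambda\xi\,\nabla\eta$, a $j$-fold spatial derivative of $\ell$ carries the factor $s\lambda^{j}\xi$ — and regrouping by order, rewrite the equation as $dv+(\mathcal{A}v+\mathcal{B}v)\,dt=\widehat{F}\,dt+\theta g\,dW(t)$, with $\mathcal{A}$ formally self-adjoint (principal part $\Delta^{2}$, a second-order term with coefficient $\sim s^{2}\lambda^{2}\xi^{2}$, and a zeroth-order term $\sim|\nabla\ell|^{4}\sim s^{4}\lambda^{4}\xi^{4}|\nabla\eta|^{4}$), $\mathcal{B}$ formally skew-adjoint (the odd-order-in-$\nabla\ell$ pieces: a third-order term $\sim s\lambda\xi\,\nabla\eta\cdot\nabla\Delta v$, a first-order term $\sim s^{3}\lambda^{3}\xi^{3}\,\nabla\eta\cdot\nabla v$, and divergence corrections), and $\widehat{F}$ collecting $\ell_{t}v$, $\theta f$, and all remaining terms, each strictly lower order than those retained.

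\emph{The multiplier identity.} Multiply the conjugated equation by $2\mathcal{A}v$, apply It\^o's formula to $\int_{G}v\,\mathcal{A}v\,dx$, integrate over $(0,T)$ and take $\mathbb{E}$ (the time-endpoint contributions vanish by the decay of $v$, the $dW$-term has zero mean); together with the analogous identity using the multiplier $2\mathcal{B}v$, this is the stochastic $L^{2}(Q)$-counterpart of $\|\mathcal{A}v\|^{2}+\|\mathcal{B}v\|^{2}+2\langle\mathcal{A}v,\mathcal{B}v\rangle=\|\widehat{F}\|^{2}$. Integrating the cross term $\mathbb{E}\int_{Q}\mathcal{A}v\cdot\mathcal{B}v$ by parts in $x$ — using $v=\partial_{\nu}v=0$ on $\Sigma$ and $|\nabla\eta|\ge C$ on $\overline{G\setminus G_{3}}$ — produces on $Q\setminus Q_{3}$ a positive quadratic form which, after reinstating $v=\theta\varphi$ and using $s\xi\ge c$, dominates the left-hand side of \cref{eq.carlemanEstimate}. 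The quadratic-variation corrections $\langle dv\rangle=|\theta g|^{2}\,dt$ arising in It\^o's formula are, after integration by parts, controlled by $\mathbb{E}\int_{Q}e^{2s\alpha}(s^{5}\lambda^{5}\xi^{5}|g|^{2}+s^{3}\lambda^{3}\xi^{3}|\nabla g|^{2}+s\lambda\xi|\nabla^{2}g|^{2})$ — whence the hypothesis $g\in L^{2}_{\mathbb{F}}(0,T;H^{2}(G))$. The source is handled by $\big|\mathbb{E}\int_{Q}\mathcal{A}v\cdot\widehat{F}\big|\le\varepsilon\,\mathbb{E}\int_{Q}|\mathcal{A}v|^{2}+C_{\varepsilon}\mathbb{E}\int_{Q}e^{2s\alpha}|f|^{2}+(\text{lower order})$, and every lower-order remainder — a strictly smaller power of $s$ or $\lambda$ times an already-controlled quantity — is absorbed by fixing $\lambda\ge\lambda_{0}$ and then $s\ge s_{0}(T)$ large.

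\emph{Globalization and the local term.} To pass from $Q\setminus Q_{3}$ to all of $Q$, add $\mathbb{E}\int_{Q_{3}}$ of the dominant form to both sides and majorize it by $\mathbb{E}\int_{Q}\zeta\,(\cdots)$ with $\zeta\in C_{c}^{\infty}(G_{2})$, $0\le\zeta\le1$, $\zeta\equiv1$ near $\overline{G_{3}}$. Integrating by parts repeatedly — using \cref{eq.carlemanForwardEquation} once more to eliminate the time derivatives that appear (again producing admissible It\^o and $g$-corrections) — trades each spatial derivative of $\varphi$ in the local term for a factor $\sim s\lambda\xi$, and, after absorbing the small remainders, leaves the single local term $\mathbb{E}\int_{Q_{2}}s^{7}\lambda^{8}\xi^{7}e^{2s\alpha}|\varphi|^{2}$; this yields \cref{eq.carlemanEstimate}.

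\emph{Main obstacle.} The delicate step should be the boundary analysis on $\Sigma$: the fourth-order operator generates boundary integrals involving $\Delta v$, $\partial_{\nu}\Delta v$ and normal derivatives of $v$ up to order three, none directly controlled by $v=\partial_{\nu}v=0$. One uses that $v=\partial_{\nu}v\equiv0$ on $\Sigma$ forces all tangential derivatives of $v$ and of $\partial_{\nu}v$ to vanish there, so each boundary term reduces on $\Gamma$ to a multiple of $\partial_{\nu}^{2}v$ or $\partial_{\nu}^{3}v$; the multipliers $\mathcal{A},\mathcal{B}$ — augmented if necessary by a boundary-adapted lower-order multiplier — must then be chosen so that the surviving boundary integrals have a sign compatible with \cref{eq.carlemanEstimate} or are absorbable. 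Tracking the powers of $s,\lambda,\xi$ honestly through the many integrations by parts and It\^o corrections is the other point requiring care.
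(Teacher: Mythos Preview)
Your outline follows the right overall scheme --- conjugate, split into self-adjoint/skew-adjoint parts, extract positivity from the commutator, localize --- and this is indeed what the paper does via its pointwise identity (\cref{thm.fundamentalIndentity}). But two substantive ingredients are missing, and without them the argument does not close.

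\textbf{The $|\varphi|^{2}$ and $|\nabla\varphi|^{2}$ terms do not come directly from the commutator.} You assert that the cross term $\langle\mathcal{A}v,\mathcal{B}v\rangle$ produces a positive form dominating the full left-hand side of \cref{eq.carlemanEstimate}. In the paper's computation of the interior term $M$ (see \cref{eq.paM1,eq.thm31S6-Middle2}) the coefficient of $|\nabla\psi|^{2}$ at top order $s^{5}\lambda^{6}\xi^{5}$ is \emph{not} manifestly positive: one gets $+22\,|\nabla\eta|^{4}|\nabla\psi\cdot\nabla\eta|^{2}-16\,|\nabla\eta|^{6}|\nabla\psi|^{2}$, and the directional term cannot compensate the full gradient. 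The paper does not try to make this positive directly. Instead it secures a good $s^{3}\lambda^{4}\xi^{3}|\Delta\varphi|^{2}$ term (via \cref{eq.s2-5-2,eq.thm31S5E2}) and then invokes a separate Carleman estimate for the Laplacian, \cref{lemma.LaplaceCarleman}, to deduce the $s^{6}\lambda^{8}\xi^{6}|\varphi|^{2}$ and $s^{4}\lambda^{6}\xi^{4}|\nabla\varphi|^{2}$ terms from $|\Delta\varphi|^{2}$ (see \cref{eq.thm31S5E8_1}). This auxiliary elliptic Carleman is a genuine extra input, and the paper's Remark following the theorem explicitly says the $s^{6}$ (rather than $s^{7}$) power on the left comes from this route. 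Your proposal needs this step.

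\textbf{The boundary analysis needs more than sign.} You correctly flag the boundary as the obstacle, but ``choose the multipliers so the surviving integrals have a sign or are absorbable'' is not enough. In the paper only part of the boundary contribution $V_{1}\cdot\nu$ has a favorable sign (coming from $\partial\eta/\partial\nu\le -C$ on $\Gamma$, see \cref{eqCarlemanBoundary6}); the tangential piece of $|\nabla\Delta\psi|^{2}$ on $\Gamma$ and all of $V_{2}\cdot\nu$ are handled by interpolation on $\Gamma$ and then by bounding $\mathbb{E}\int_{0}^{T}e^{2s\alpha_{\star}}|\varphi|_{H^{4}(G)}^{2}\,dt$. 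That last quantity is controlled by an $H^{4}$-regularity estimate for the stochastic equation itself, \cref{lemma.auxiliaryEstimates} (proved via a Galerkin argument in the appendix), applied to $e^{s\alpha_{\star}}\varphi$. This regularity lemma is what lets boundary terms of order $s^{9/4}\xi^{9/4}|\nabla^{2}\varphi|^{2}$ and $s^{3/4}\xi^{3/4}|\nabla^{3}\varphi|^{2}$ be absorbed back into the interior; your plan has no analogue of it.
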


\begin{remark}
    The orders of some terms in \cref{eq.carlemanEstimate} seem inconsistent.
    For instance, it is an interesting question whether $ \lambda^2|\nabla \Delta \varphi|^2 $ can be included on the left-hand side of \cref{eq.carlemanEstimate}.
    To do this, one possible approach is to replace $ \Phi_{2} $ in \cref{thm.fundamentalIndentity} with $ \Phi_{2} = -2s\lambda^{2}\xi |\nabla \eta|^{2} - 2s\lambda\xi \Delta \eta - \lambda^{2} $.
    This will result in the term $-2s^{2}\lambda^{4}\xi^{2}|\nabla\eta|^{2}|\Delta\psi|^{2}$ appearing in $M$ (see \cref{eq.paM1}), and we currently lack a method to handle this term.
    Hence, we currently do not know how to achieve $ \lambda^2|\nabla \Delta \varphi|^2 $.

    Moreover, the term $ s^{7}  \lambda^{8} \xi^{7}  e^{2 s \alpha} |\varphi|^{2} $ appears on the right-hand side  instead of $ s^{6}  \lambda^{8} \xi^{6}  e^{2 s \alpha} |\varphi|^{2} $. 
    It is because we need to estimate local term $   \mathbb{E} \int_{(0, T) \times G_{3}} s^{3} \lambda^{4} \xi^{3}\theta^{2}|\Delta \varphi|^{2}d x d t $ (see \cref{eq.pfthm2.1.S34}).
    Note that the term $ s^{6}  \lambda^{8} \xi^{6}  e^{2 s \alpha} |\varphi|^{2} $ on the left-hand side has a lower order of $ s $ than that on the right-hand side.
    This is because we utilize \cref{lemma.LaplaceCarleman} to obtain this term (see \cref{eq.thm31S5E8_1}), hence, we cannot obtain results of a higher order in $s$.
    Indeed, as mentioned in \cite[Remark 1.7]{LeRousseau2020}, for a fourth-order equation, it seems difficult to let $  s^{7}  \lambda^{8} \xi^{7}  e^{2 s \alpha} |\varphi|^{2} $ appear in the left hand side of \cref{eq.carlemanEstimate}.
\end{remark}

\begin{remark}
    From the proof of \Cref{thm.carlemanForwardPre} and that of \cite[Theorem 1.3]{Wang}, it is not difficult to see that Carleman inequality \cref{eq.carlemanEstimate} is still valid when $ \varphi $ satisfies the boundary condition $ \varphi= \Delta \varphi =0  $ on $ \Sigma $. 
    Hence, \cref{thmControlResult} holds true when $ y_{i} $ satisfies $ y_{i}= \Delta y_{i} =0  $ on $ \Sigma $ for $ i= 1,2 $.
\end{remark}

\begin{remark}
    Using the pointwise weighted identity  \cref{eq.finalEquation} with $ \kappa = - 1 $, we can deduce the Carleman estimate for the backward fourth order stochastic parabolic equations. 
    In comparison to existing results (see \cite[Theorem 1.3]{Wang}), we further estimate $ \nabla \Delta \varphi $, enabling us to contain $ \nabla \Delta y $ in the drift term of the control system in \cite{Wang} .
\end{remark}

The rest of this paper is organized as follows. In  \Cref{sc.2}, we give some preliminaries. \Cref{sc.3} is devoted to prove \cref{thm.carlemanForwardPre}. 
By means of this Carleman estimate, we prove \cref{thmObservabilityEstimateI} in  \Cref{sc.4}. 
Then, \Cref{sc.5} is addressed to the proof of  \cref{thmControlResult}.
Finally, we finished our article with the appendix in which we prove lemmas that are useful in deriving the Carleman estimate.

\section{Preliminary results} \label{sc.2}

In this section, we present some preliminary results for the proof of  \cref{thm.carlemanForwardPre}.

We have a pointwise weighted identity for the stochastic   partial differential operator  $ \kappa d \varphi+\Delta^{2} \varphi dt $,  which plays a fundamental role in the proof of the Carleman estimates.  

In what follows, we will use the notation $y_{x_{i}} \equiv y_{x_{i}}(x)=\partial y(x) / \partial x_{i}$, where $x_{i}$ is the $i$-th coordinate of a generic point $x=(x_{1}, \ldots, x_{n})$ in $\mathbb{R}^{n} $.

\begin{theorem}\label{thm.fundamentalIndentity}
    Let $ \varphi $ be an  $ H^{4}(G) $-valued It\^o process.
    Set   $ \theta=e^{\ell} $, $ \ell=s \alpha $, $ \psi=\theta \varphi $. 
    $ \kappa $ is a nonzero constant.
    The function $ \alpha $ and $ \xi $ are as in  \cref{eq.alpha}. 
    Let $\eta\in C^4(\mathbb{R}^n)$. 
    Then, for any $ t\in[0,T] $ and a.e.  $ (x,\omega) \in G \times \Omega $, 
    \begin{align}\label{eq.finalEquation}
        \notag
		& 2 \theta K_{1}  ( \kappa d \varphi \!+\! \Delta^{2} \varphi d t )
		 \!-\! 2 \kappa \sum_{i,j=1}^{n} (
		\psi_{x_{i}x_{i}x_{j}} d \psi - \psi_{x_{i}x_{j}}d \psi_{x_{i}}
		\!+\!\Psi_{2} \psi_{x_{i}} \delta_{ij} d \psi
		\!+\!\Psi_{3}^{ij} \psi_{x_{i}} d \psi
		)_{x_{j}}
		\!-\! 2 \operatorname{div}  (V_{1} +  V_{2}) d t
		\\ \notag
		& =
		2 K_{1} ^{2} d t
		+ 2 K_{1} K_{3}
		+ 2 M d t
		+ 2 \sum_{i,j,k,l=1}^{n} \Lambda^{i j k l}_{1} \psi_{x_{i}x_{j}}\psi_{x_{k}x_{l}}d t
		+ 2 \sum_{i,j=1}^{n} \Lambda_{2}^{i j} \psi_{x_{i}} \psi_{x_{j}}d t
        + 2 \Lambda_{3} \psi^{2}d t
		\\ \notag
		& \quad 
        \!+\!  2 \kappa d \bigg(
            \frac{1}{2} |\nabla^{2} \psi|^{2}
            \!-\!\frac{1}{2} \Psi_{2} |\nabla \psi |^{2}
            \!-\! 2 s^{2} \lambda^{2} \xi^{2} |\nabla \eta \nabla \psi|^{2}
            \!+\! \frac{1}{2} \Psi_{5} \psi^{2}
		\bigg)
		\!+\!  \kappa \theta^{2}  \Psi_{2} | d \nabla \varphi + \nabla \ell d \varphi|^{2}
		\!-\! \kappa  \Psi_{5} \theta^{2} ( d \varphi )^{2}
		\\ 
		& \quad
		+ 4\kappa  s^{2}\lambda^{2}\xi^{2} \theta^{2} | \nabla \eta d \nabla \varphi + \nabla \eta \nabla \ell d \varphi|^{2}
		-\kappa \theta^{2} \sum_{i,j=1}^{n} [  d \varphi_{x_{i}x_{j}} + 2 \ell_{x_{j}} d \varphi_{x_{i}} +   ( \ell_{x_{i}x_{j}} + \ell_{x_{i}} \ell_{x_{j}} ) d \varphi ]^{2}
    \end{align}
    Here
\begin{equation*}
    K_{1} = \Delta^{2} \psi   + \Psi_{2}\Delta \psi  + \sum_{i, j=1}^n  \Psi_{3} ^{ij} \psi_{x_{i}x_{j}}  + \sum_{i=1}^n \Psi_{4}^{i}  \psi_{x_{i}}    + \Psi_{5} \psi  ,
\end{equation*}
with 
\begin{equation*}
    \begin{cases}
        \Psi_{2} = 2 s^{2}\lambda^{2}\xi^{2} |\nabla\eta|^{2},
        \quad
        \Psi_{3}^{ij} = 4 s^{2}\lambda^{2}\xi^{2} \eta_{x_{i}} \eta_{x_{j}},
        \\ 
        \Psi_{4}^{i} = 12 s^{2}\lambda^{3}\xi^{2} |\nabla\eta|^{2} \eta_{x_{i}}  +4 s^{2} \lambda^{2} \xi^{2} \Delta \eta \eta_{x_{i}}  
        + \sum\limits_{j=1}^n 8 s^{2}\lambda^{2}\xi^{2} \eta_{x_{i}x_{j}} \eta_{x_{j}},
        \\  
        \Psi_{5} = s ^{4}\lambda ^{4}\xi^{4} |\nabla \eta|^{4},
    \end{cases}\quad i,j=1,\cdots,n.
    \end{equation*}
\begin{equation*}
 	K_{2}  =    \sum_{i=1}^n \Phi_{1}^{i} \Delta \psi_{x_{i}} d t  + \Phi_{2} \Delta \psi d t  + \sum_{i, j=1}^n \Phi_{3}^{ij}  \psi_{x_{i}x_{j}} d t  + \sum_{i=1}^n \Phi_{4} ^{i}  \psi_{x_{i}} d t  + \Phi_{5} \psi d t
    + \kappa d \psi 
 	,
\end{equation*}
with 
\begin{equation*}
	\begin{cases} \Phi_{1}^{i} = -4 s \lambda \xi  \eta_{x_{i}},
 	\quad
 	\Phi_{2} =  -2 s\lambda^{2}\xi |\nabla \eta|^{2} - 2s\lambda\xi  \Delta \eta -  \lambda,
 	\\ 
 	\Phi_{3}^{ij} = 4 s\lambda\xi  \eta_{x_{i}x_{j}} - 4 s\lambda^{2}\xi \eta_{x_{i}} \eta_{x_{j}},\quad \Phi_{4}^{i} = -4 s^{3} \lambda^{3} \xi^{3} |\nabla\eta|^{2} \eta_{x_{i}} ,
 	\\  \Phi_{5} =
 	- 6 s^{3} \lambda ^{4}\xi^{3} |\nabla\eta|^{4}
 	- 12 s ^{3}\lambda ^{3}\xi^{3}  (\nabla^{2}\eta \nabla \eta \nabla \eta )
 	-2 s^{3} \lambda ^{3}\xi^{3} |\nabla\eta|^{2} \Delta \eta
 	- s ^{3}\lambda ^{\frac{7}{2}}\xi^{3} |\nabla\eta|^{4},
    \end{cases} \!\!\! i,j=1,\cdots,n.
\end{equation*}
\begin{align*} 
	\notag
	K_{3} & =
    -8 s \lambda \xi \sum\limits_{i,j=1}^{n} \eta_{x_{i}x_{j}} \psi_{x_{i}x_{j}}dt
    -4\nabla  \Delta \ell \cdot \nabla \psi  d t + 4  (\nabla \ell \cdot \nabla \Delta \ell ) \psi d t 
    +2 |\nabla^{2} \ell|^{2} \psi d t-\Delta^{2} \ell \psi d t 
    \\ \notag
    & \quad
    +|\Delta \ell|^{2} \psi d t  
    + 8 s^{3} \lambda^{3} \xi^{3}(\nabla^{2} \eta \nabla \eta \nabla \eta) \psi d t
    -\kappa s \alpha_{t} \psi d t
    + s ^{3}\lambda ^{\frac{7}{2}}\xi^{3} |\nabla\eta|^{4} \psi dt
    + \lambda \Delta \psi dt
    ,
\end{align*}
\begin{align*}
    V_{1} & =( V_{1}^{1}, V_{1}^{2},\cdots, V_{1}^{n} ),
    \quad
    V_{2} = ( V_{2}^{1}, V_{2}^{2},\cdots, V_{2}^{n} ),
    \\
    V_{1}^{j} & =  \sum_{i,k=1}^n\Big[ \sum_{l=1}^n \Phi_{1}^{l} \psi_{x_{k}x_{k}x_{l}} \psi_{x_{i}x_{i}x_{j}}  -  \frac{1}{2} \sum_{l=1}^n  \Phi_{1}^{j} \psi_{x_{k}x_{k}x_{l}} \psi_{x_{i}x_{i}x_{l}}
        + \frac{1}{2}  \Psi_{2} \Phi_{1}^{j}\psi_{x_{i}x_{i}}\psi_{x_{k}x_{k}}
      \\
      &  \quad \quad \quad \ \
       + \sum_{l=1}^{n} \Psi_{3}^{ik}\Phi_{1}^{l} \psi_{x_{i}x_{k}}\psi_{x_{l}x_{j}}
       -  \frac{1}{2}  \sum_{l=1}^{n}\Psi_{3}^{ij}\Phi_{1}^{l} \psi_{x_{i}x_{k}}\psi_{x_{k}x_{l}}
        + \Phi_{4}^{k} \psi_{x_{i}x_{i}x_{j}} \psi_{x_{k}}
        \\
        & \quad \quad \quad \ \
        - \Phi_{4}^{k} \psi_{x_{i}x_{j}} \psi_{x_{i}x_{k}}
        + \frac{1}{2} \Phi_{4}^{j}\psi_{x_{i}x_{k}}^{2}
        +\Big ( \Psi_{2} \Phi_{4}^{k} \delta_{ij}
        - \frac{1}{2} \Psi_{2} \Phi_{4}^{j} \delta_{ik}
        - \frac{1}{2} \Psi_{5} \Phi_{1}^{j} \delta_{ik}
        + \Psi_{3}^{ij}\Phi_{4}^{k} \Big ) \psi_{x_{i}}\psi_{x_{k}}\Big],
    \\
    V_{2}^{j}  &=
    \sum_{i,l,r,m=1}^n \Theta_{1}^{ijlrm} \psi_{x_{i}x_{i}x_{l}}\psi_{x_{r}x_{m}}
    + \sum_{i,k,l,r=1}^n \Theta_{2}^{ijklr} \psi_{x_{i}x_{k}}\psi_{x_{l}x_{r}}
    + \sum_{i,k,l=1}^n \Theta_{3}^{ijkl} \psi_{x_{i}x_{k}}\psi_{x_{l}}
    \\
    & \quad
    + \sum_{i,k=1}^n \Theta_{4}^{ijk} \psi_{x_{i}}\psi_{x_{k}}
    + \sum_{i=1}^n \Theta_{5} \psi_{x_{i}x_{i}x_{j}}\psi
    + \sum_{i,k=1}^n \Theta_{6}^{ijk} \psi_{x_{i}x_{k}}\psi
    + \sum_{i=1}^n \Theta_{7}^{ij} \psi_{x_{i}}\psi
    +  \Theta_{8}^{j} \psi^{2}
,
\end{align*}
\begin{align*}
    M & =  
    - \sum_{i,j,k,l = 1}^{n} (
        \Phi_{1x_{j}}^{ l } 
        + \Phi_{3}^{j l } 
    )  \psi_{x_{k}x_{k}x_{ l }} \psi_{x_{i}x_{i}x_{j}}
    + \sum_{i = 1}^{n} \biggl( 
        \dfrac{1}{2} \Phi_{1 x_{i}}^{i} - \Phi_{2}
    \biggr) |\nabla \Delta \psi |^{2}
    \\
    & \quad 
    + \sum_{i,j,k,l = 1}^{n} [
        (- \Psi_{3}^{i j} \Phi_{1}^{l} )_{x_{k}}
        + \Psi_{3}^{i l} \Phi_{3}^{k j}
    ] \psi_{x_{i}x_{j}} \psi_{x_{k}x_{l}}
    \\
    & \quad 
    + \sum_{i,j,k = 1}^{n} \bigg[
        2 \Phi_{4 x_{j}}^{k}
        + \Psi_{2} \Phi_{3}^{j k}
        + \dfrac{1}{2} \sum_{l=1}^{n} (\Psi_{3}^{l j} \Phi_{1}^{k})_{x_{l}}
        + \Psi_{3}^{j k} \Phi_{2}
        - \Psi_{4}^{j} \Phi_{1}^{k}
    \bigg] 
    \psi_{x_{i}x_{j}} \psi_{x_{i}x_{k}}
    \\
    & \quad
    - \dfrac{1}{2} \sum_{i=1}^{n} \Phi_{4 x_{i} }^{i} |\nabla^{2} \psi |^{2}
    + \bigg [
        \Phi_{5} 
        -  \dfrac{1}{2} \sum_{i=1}^{n}  (\Psi_{2} \Phi_{1}^{i})_{x_{i}}
        + \Phi_{2} \Psi_{2}
    \bigg ] |\Delta \psi|^{2}
    \\
    & \quad
    + \sum_{i, j =1}^{n} \biggl[
        - (\Psi_{2} \Phi_{4}^{i})_{x_{j}} 
        - \dfrac{1}{2} \sum_{k=1}^{n} ( \Psi_{3}^{i k} \Phi_{4}^{j} ) _{x_{k}}
        - \Psi_{3}^{i j} \Phi_{5}
        + \Psi_{4}^{i} \Phi_{4}^{j}
        + (\Psi_{5} \Phi_{1}^{i})_{x_{j}}
        - \Psi_{5}\Phi_{3}^{i j}
    \biggr] \psi_{x_{i}} \psi_{x_{j}}
    \\
    & \quad 
    + \biggl[
        - \dfrac{1}{2} \sum_{i=1}^{n}  (\Psi_{5} \Phi_{4}^{i})_{x_{i}} 
        + \Psi_{5} \Phi_{5}
    \biggr] | \psi|^{2}
    + \biggl[
        \dfrac{1}{2} \sum_{i=1}^{n} (\Psi_{2} \Phi_{4}^{i} )_{x_{i}}
        - \Psi_{2} \Phi_{5}
        + \dfrac{1}{2} \sum_{i=1}^{n} (\Psi_{5} \Phi_{1}^{i} )_{x_{i}}
        - \Psi_{5} \Phi_{2}
    \biggr] |\nabla \psi|^{2}
    ,
\end{align*}
\begin{align*}
    & \Lambda_{1}^{ijkl} =
    \sum_{r=1}^{n} \Big(
        \frac{1}{2} \Phi_{2x_{r}x_{r}} \delta_{ij}\delta_{kl}
        - \Phi_{3x_{j}x_{r}}^{kr} \delta_{il}
        - \Phi_{3x_{i}x_{r}}^{kr} \delta_{lj}
        +\frac{1}{2}\sum_{m=1}^{n} \Phi_{3x_{r}x_{m}}^{rm} \delta_{ik} \delta_{lj}
    \Big)
    + \Phi_{3x_{i}x_{j}}^{kl}
    + \Phi_{3x_{i}x_{l}}^{kj}
    ,
    \\
   & \Lambda_{2}^{ij}
        =
        \sum_{k=1}^n\Big [  - \Phi_{4 x_{i}x_{k}x_{k}}^{j}
     +  \frac{1}{2} \Phi_{4x_{i}x_{j}x_{k}}^{k}
     - 2\Phi_{5x_{i}x_{j}}
    + (\Psi_{2}\Phi_{3}^{jk} )_{x_{i}x_{k}}
    - \frac{1}{2}  (\Psi_{2}\Phi_{3}^{ij} )_{x_{k}x_{k}}
    - \sum_{l=1}^{n} \frac{1}{2}   (\Psi_{2}\Phi_{3}^{kl} \delta_{ij} )_{x_{k}x_{l}}
    \\
    & \quad \quad \quad \quad \ 
    +   (\Psi_{3}^{ik}\Phi_{2}  )_{x_{k}x_{j}}
    - \sum_{l=1}^{n} \frac{1}{2}  (\Psi_{3}^{kl}\Phi_{2} \delta_{ij} )_{x_{k}x_{l}}
    - \frac{1}{2}  (\Psi_{3}^{ij}\Phi_{2}  )_{x_{k}x_{k}}
    + \sum_{l=1}^{n}  (\Psi_{3}^{ik}\Phi_{3}^{jl} )_{x_{k}x_{l}}
    \\
    & \quad \quad \quad \quad \ 
    - \sum_{l=1}^{n} \frac{1}{2}  (\Psi_{3}^{ij}\Phi_{3}^{kl} )_{x_{k}x_{l}}
    - \frac{1}{2}  (\Psi_{3}^{kl}\Phi_{3}^{ij} )_{x_{k}x_{l}}
    + \frac{1}{2}  (\Psi_{4}^{i}\Phi_{1}^{j} )_{x_{k}x_{k}}
    +  (\Psi_{4}^{i}\Phi_{2} )_{x_{j}}
    + \frac{1}{2}  (\Psi_{4}^{k}\Phi_{2}  \delta_{ij} )_{x_{k}}
    \\
    & \quad \quad \quad \quad \ 
    -  (\Psi_{4}^{i}\Phi_{3}^{jk} )_{x_{k}}
    + \frac{1}{2}  (\Psi_{4}^{k}\Phi_{3}^{ij} )_{x_{k}}
    \Big ],
    \\
      &   \Lambda_{3} =
        \sum_{i=1}^n \Big[  \sum_{j=1}^n \frac{1}{2} \Phi_{5x_{i}x_{i}x_{j}x_{j}}
        + \sum_{j=1}^n \frac{1}{2}   ( \Psi_{2} \Phi_{5}  )_{x_{j}x_{j}}
        + \sum_{j=1}^n \frac{1}{2} (\Psi_{3}^{ij} \Phi_{5})_{x_{i}x_{j}}
        - \frac{1}{2}   ( \Psi_{4} ^{i} \Phi_{5}  )_{x_{i}}
        \\
        &  \quad \quad \quad \quad \
        - \sum_{j=1}^n \frac{1}{2} ( \Psi_{5} \Phi_{1}^{i}  )_{x_{i}x_{j}x_{j}}
        + \frac{1}{2}  ( \Psi_{5} \Phi_{2}  )_{x_{i}x_{i}}
         \Big]
        ,
    \end{align*}
    and 
    \begin{align*}
     &   \Theta_{1}^{ijlrm} =
       \Phi_{2} \delta_{lj}\delta_{rm}
     + \Phi_{3}^{rm}    \delta_{lj}
     - \Phi_{3}^{mj}    \delta_{lr}
     + \Phi_{3}^{lr}    \delta_{mj},                                                \\
     &   \Theta_{2}^{ijklr} =
        -\frac{1}{2}\Phi_{2x_{j}} \delta_{ik}\delta_{lr}
        - \Phi_{3x_{i}}^{lr} \delta_{kj}
        + \Phi_{3x_{k}}^{rj} \delta_{il}
        + \sum_{m=1}^{n} \Phi_{3x_{m}}^{rm} \delta_{kj}\delta_{il}
        - \frac{1}{2} \sum_{m=1}^{n} \Phi_{3x_{m}}^{jm} \delta_{il} \delta_{kr}
        - \Phi_{3x_{i}}^{kl} \delta_{rj}
        ,
        \\
      &   \Theta_{3}^{ijkl} =
        - \Phi_{4x_{i}}^{l}\delta_{kj}
        - \Phi_{5}\delta_{ik}\delta_{lj}
        + \Psi_{2}\Phi_{3}^{ik}\delta_{lj}
        -\Psi_{2}\Phi_{3}^{ij}\delta_{kl}
        + \Psi_{3}\Phi_{2}\delta_{ik}
        -\Psi_{3}\Phi_{2}\delta_{ij}
        +\Psi_{3}^{ik}\Phi_{3}^{lj}
        \\
        & \qquad\quad\;
        -\Psi_{3}^{ij}\Phi_{3}^{kl}
        +\Psi_{4}^{l}\Phi_{1}^{k}\delta_{ij}
        \\
       &   \Theta_{4}^{ijk}=
        \Phi_{4x_{i}x_{j}}^{k}
         -  \frac{1}{2}\Phi_{4x_{i}x_{k}}^{j}
         + 2 \Phi_{5x_{i}}\delta_{kj}-\Phi_{5x_{j}}\delta_{ik}
         -  ( \Psi_{2}\Phi_{3}^{jk}  )_{x_{i}}
         + \frac{1}{2}  ( \Psi_{2}\Phi_{3}^{ik}  )_{x_{j}}
         + \sum_{l=1}^{n} \frac{1}{2}  ( \Psi_{2}\Phi_{3}^{jl})_{x_{l}} \delta_{ik}
        \\
        & \qquad\quad 
        - \sum_{l=1}^{n}  ( \Psi_{3}^{il}\Phi_{2}  )_{x_{l}}\delta_{kj}
        +  \frac{1}{2} \sum_{l=1}^{n}  ( \Psi_{3}^{lj}\Phi_{2}  )_{x_{l}}\delta_{ik}
         +  \frac{1}{2}   ( \Psi_{3}^{ik}\Phi_{2}  )_{x_{j}}
         -  \sum_{l=1}^{n}  ( \Psi_{3}^{ij}\Phi_{3}^{kl}  )_{x_{l}}
         + \frac{1}{2}  \sum_{l=1}^{n}  ( \Psi_{3}^{ik}\Phi_{3}^{jl}  )_{x_{l}}
        \\
        & \qquad\quad 
        + \frac{1}{2}  \sum_{l=1}^{n}  ( \Psi_{3}^{lj}\Phi_{3}^{ik}  )_{x_{l}}
        - \frac{1}{2}    ( \Psi_{4}^{i}\Phi_{1}^{k}  )_{x_{j}}
        +  \Psi_{4}^{i} \Phi_{2} \delta_{kj} 
        - \frac{1}{2}\Psi_{4}^{j} \Phi_{2} \delta_{ik}
        + \Psi_{4}^{i}\Phi_{3}^{kj}
        -\frac{1}{2} \Psi_{4}^{j}\Phi_{3}^{ik}
    \\
      &   \Theta_{5}= \Phi_{5},
        \\
      &   \Theta_{6}^{ijk}= - \Phi_{5x_{j}}\delta_{ik}+ \Psi_{5}\Phi_{1}^{k}\delta_{ij},
        \\
      &   \Theta_{7}^{ij} = \sum_{k=1}^{n} \Phi_{5x_{k}x_{k}}\delta_{ij} + \Psi_{2} \Phi_{5} \delta_{ij} + \Psi_{3}^{ij}\Phi_{5} -  ( \Psi_{5}\Phi_{1}^{i}  )_{x_{j}} + \Psi_{5}\Phi_{2} \delta_{ij} + \Psi_{5}\Phi_{3}^{ij},
        \\
      &   \Theta_{8}^{j} = - \frac{1}{2} \sum_{i=1}^n \Phi_{5x_{i}x_{i}x_{j}} - \frac{1}{2} ( \Psi_{2}\Phi_{5}  )_{x_{j}} - \frac{1}{2} \sum_{i=1}^n  ( \Psi_{3}^{ij} \Phi_{5}  )_{x_{i}} + \frac{1}{2} \Psi_{4}^{j}\Phi_{5}   
      +\frac{1}{2}\sum_{i=1}^n  ( \Psi_{5}\Phi_{1}^{j}  )_{x_{i}x_{i}}
      -\frac{1}{2}    (\Psi_{5}\Phi_{2} )_{x_{j}}
      \\
      & \quad \quad \ 
        - \frac{1}{2}\sum_{i=1}^n  ( \Psi_{5}\Phi_{3}^{ij}  )_{x_{i}}
        +\frac{1}{2} \Psi_{5} \Phi_{4}^{j}
        .
    \end{align*}
\end{theorem}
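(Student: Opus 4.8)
The plan is to prove \cref{eq.finalEquation} by the standard conjugation-and-splitting technique for Carleman estimates, adapted to the stochastic fourth-order operator. Since $\theta=e^{s\alpha}$ is smooth and deterministic in $(x,t)$, one writes $\varphi=\theta^{-1}\psi$. First I would expand $\theta\,\Delta^{2}\varphi=\theta\,\Delta^{2}(\theta^{-1}\psi)$ by the Leibniz rule, using the elementary relations $\ell_{x_{i}}=s\lambda\xi\eta_{x_{i}}$, $\ell_{x_{i}x_{j}}=s\lambda\xi\eta_{x_{i}x_{j}}+s\lambda^{2}\xi\eta_{x_{i}}\eta_{x_{j}}$, and so on, so that $\theta\,\Delta^{2}\varphi$ becomes a sum of explicit polynomials in $s,\lambda,\xi$ and derivatives of $\eta$ multiplying the space derivatives of $\psi$ up to order four, together with lower-order-in-time contributions; simultaneously, Itô's formula for $\psi=\theta\varphi$ gives $d\psi=\theta\,d\varphi+s\alpha_{t}\theta\varphi\,dt$ (no quadratic-variation term, $\theta$ being of finite variation in $t$), so $\theta\,\kappa\,d\varphi=\kappa\,d\psi-\kappa s\alpha_{t}\psi\,dt$. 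Grouping the result, one regroups the terms into $K_{1}\,dt$ (collecting $\Delta^{2}\psi$ and the lower even-order terms, with coefficients $\Psi_{2},\dots,\Psi_{5}$), $K_{2}$ (collecting the third-order term $\sum_{i}\Phi_{1}^{i}\Delta\psi_{x_{i}}$, the further lower-order terms with coefficients $\Phi_{2},\dots,\Phi_{5}$, and the stochastic differential $\kappa\,d\psi$), and a remainder $K_{3}$ gathering the genuinely lower-order leftovers, including $-\kappa s\alpha_{t}\psi\,dt$ and $\lambda\Delta\psi\,dt$. By construction this produces the decomposition $\theta(\kappa\,d\varphi+\Delta^{2}\varphi\,dt)=K_{1}\,dt+K_{2}+K_{3}$; the coefficients $\Phi_{i}$ are chosen (as the \cref{thm.fundamentalIndentity} statement records) precisely so that the cross term below delivers a manageable bulk term.

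Multiplying the decomposition by $2K_{1}$ gives
$2\theta K_{1}(\kappa\,d\varphi+\Delta^{2}\varphi\,dt)=2K_{1}^{2}\,dt+2K_{1}K_{2}+2K_{1}K_{3}$.
The terms $2K_{1}^{2}\,dt$ and $2K_{1}K_{3}$ already appear verbatim on the right-hand side of \cref{eq.finalEquation}, so the entire content of the theorem reduces to the identity for the cross term $2K_{1}K_{2}$: one must show that it equals $2\operatorname{div}(V_{1}+V_{2})\,dt$ plus the stochastic spatial divergence $2\kappa\sum_{i,j}(\psi_{x_{i}x_{i}x_{j}}d\psi-\psi_{x_{i}x_{j}}d\psi_{x_{i}}+\Psi_{2}\psi_{x_{i}}\delta_{ij}d\psi+\Psi_{3}^{ij}\psi_{x_{i}}d\psi)_{x_{j}}$ plus $2M\,dt+2\sum\Lambda_{1}^{ijkl}\psi_{x_{i}x_{j}}\psi_{x_{k}x_{l}}\,dt+2\sum\Lambda_{2}^{ij}\psi_{x_{i}}\psi_{x_{j}}\,dt+2\Lambda_{3}\psi^{2}\,dt$ plus the Itô-type terms: the total differential $2\kappa\,d(\tfrac12|\nabla^{2}\psi|^{2}-\tfrac12\Psi_{2}|\nabla\psi|^{2}-2s^{2}\lambda^{2}\xi^{2}|\nabla\eta\nabla\psi|^{2}+\tfrac12\Psi_{5}\psi^{2})$ and the quadratic correction terms $\kappa\theta^{2}\Psi_{2}|d\nabla\varphi+\nabla\ell\,d\varphi|^{2}$, $-\kappa\Psi_{5}\theta^{2}(d\varphi)^{2}$, $4\kappa s^{2}\lambda^{2}\xi^{2}\theta^{2}|\nabla\eta\,d\nabla\varphi+\nabla\eta\nabla\ell\,d\varphi|^{2}$ and $-\kappa\theta^{2}\sum_{i,j}[d\varphi_{x_{i}x_{j}}+2\ell_{x_{j}}d\varphi_{x_{i}}+(\ell_{x_{i}x_{j}}+\ell_{x_{i}}\ell_{x_{j}})d\varphi]^{2}$.

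To establish this cross-term identity I would expand $2K_{1}K_{2}$ into the products of each term of $K_{1}$ with each term of $K_{2}$. For the products not involving $\kappa\,d\psi$, repeatedly apply pointwise integration by parts in $x$ — the elementary identities $fg_{x_{j}}=(fg)_{x_{j}}-f_{x_{j}}g$, $\Delta f\cdot g=\operatorname{div}(g\nabla f-f\nabla g)+f\Delta g$, and their iterates for the higher-order factors — to move derivatives onto the lower-order factor, symmetrize, and sort the output into (i) total $x$-divergences, which assemble into the fields $V_{1}$ and $V_{2}$ (the former carrying the $\Phi_{1}^{i}$- and $\Phi_{4}$-contributions, the latter the $\Phi_{2},\Phi_{3},\Phi_{4},\Phi_{5}$-contributions through $\Theta_{1},\dots,\Theta_{8}$); (ii) the quadratic bulk form $M$ in $\nabla\Delta\psi,\nabla^{2}\psi,\Delta\psi,\nabla\psi,\psi$ with coefficients built from the derivatives of the $\Phi$'s and $\Psi$'s; and (iii) the lower-weight quadratic remainders $\Lambda_{1}^{ijkl}\psi_{x_{i}x_{j}}\psi_{x_{k}x_{l}}$, $\Lambda_{2}^{ij}\psi_{x_{i}}\psi_{x_{j}}$, $\Lambda_{3}\psi^{2}$. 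For the products involving $\kappa\,d\psi$ one uses Itô's formula instead: e.g. $2\Delta^{2}\psi\cdot\kappa\,d\psi$ is rewritten, after two spatial integrations by parts, as $2\kappa\,d(\tfrac12|\nabla^{2}\psi|^{2})$ plus the contribution to the stochastic spatial divergence plus the non-negative correction $-\kappa\theta^{2}\sum_{i,j}[d\varphi_{x_{i}x_{j}}+\cdots]^{2}$, which arises because $d\psi_{x_{i}x_{j}}=\theta(d\varphi_{x_{i}x_{j}}+2\ell_{x_{j}}d\varphi_{x_{i}}+(\ell_{x_{i}x_{j}}+\ell_{x_{i}}\ell_{x_{j}})d\varphi)$ and the martingale part of $d\varphi$ has nontrivial quadratic variation; likewise $2\Psi_{2}\Delta\psi\cdot\kappa\,d\psi$ and $2\Psi_{5}\psi\cdot\kappa\,d\psi$ produce the remaining pieces of the total differential and the remaining correction terms. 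Collecting everything yields \cref{eq.finalEquation}.

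The conceptual steps are standard; the genuine obstacle is twofold. The first is sheer volume and accuracy of bookkeeping: dozens of product terms, each differentiated several times, and all the coefficients $\Phi_{i},\Psi_{i}$ (polynomials in $s,\lambda,\xi$ with $\eta$-dependent factors) must be tracked exactly so that $V_{1},V_{2},M,\Lambda_{1},\Lambda_{2},\Lambda_{3}$ and $\Theta_{1},\dots,\Theta_{8}$ come out precisely as stated. The second, and the part that is subtle rather than merely laborious, is the correct handling of the stochastic terms: every integration by parts against $\kappa\,d\psi$ must be routed through Itô's formula and generates a squared Itô-correction on the right-hand side, and getting the signs and the exact combinations $\nabla\ell\,d\varphi$, $\nabla\eta\nabla\ell\,d\varphi$, $\ell_{x_{i}x_{j}}+\ell_{x_{i}}\ell_{x_{j}}$ right is where care is most needed. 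I would therefore organize the computation by first carrying out the purely deterministic identity for the operator $\Delta^{2}$ — essentially the fourth-order deterministic Carleman identity, compare \cite{Wang} — and then adding, as a separate self-contained block, only the extra contributions coming from $\kappa\,d\varphi$ and its interaction with $K_{1}$.
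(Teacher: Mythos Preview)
Your proposal is correct and follows exactly the approach the paper intends: the paper omits the proof entirely, stating only that it ``can be proved by following the steps of the proof of the fundamental identity presented in \cite[Theorem~2.2]{Wang}'', and your outline --- conjugate, decompose $\theta(\kappa\,d\varphi+\Delta^{2}\varphi\,dt)=K_{1}\,dt+K_{2}+K_{3}$, multiply by $2K_{1}$, then expand the cross term $2K_{1}K_{2}$ via iterated pointwise integration by parts for the $dt$-products and via It\^o's formula for the $\kappa\,d\psi$-products --- is precisely that method. Your identification of the two real difficulties (the sheer bookkeeping of the coefficients and the careful tracking of the It\^o corrections) is accurate.
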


\Cref{thm.fundamentalIndentity} can be proved by following the steps of the proof of the fundamental identity presented in \cite[Theorem 2.2]{Wang}. So we omit the proof here.

It is easy to see that for all $ t \in (0,T) $ and $ x \in G $,
\begin{equation}\label{eqEstimatesOrder}
    |\xi_{t}|  \leq \frac{T}{2} \xi^{3}, \quad |\alpha_{t}|  \leq \frac{T}{2} \xi^{3}, \quad \xi^{-1} \leq \frac{T}{2}.
\end{equation}

In what follows, for a nonnegative integer $m$, we denote by $O(\lambda^{m})$ a function of order $\lambda^{m}$ for large $\lambda$.
Similarly to \cite[Proposition 2.2]{Wang}, we have the following estimate.

\begin{proposition}\label{prop.estimatesOrder}
    When $ s $ and $ \lambda $ are large enough, it holds that\vspace{-1mm}
    \begin{equation}\label{eq.estimatesOrderInequalities}
        \begin{cases}
            \begin{alignedat}{2}
                &\sum_{i,j,k,l=1}^{n} \Lambda^{ijkl}_{1}  \zeta _{ij} \zeta_{kl} \geq  s  \xi O(\lambda^{4}) |\zeta|^{2}, && \quad \forall \zeta=( \zeta_{ij} ) \in \mathbb{R}^{n\times n},
                \\
                &\sum_{i, j=1}^n \Lambda_{2}^{ij} \zeta_{i} \zeta_{j} \geq \big[s^{3}\xi^{3}O(\lambda^{6}) + Ts^{2}\xi^{4}O(\lambda^{2}) \big] |\zeta|^{2}  , && \quad \forall \zeta= ( \zeta_{1},\dots,\zeta_{n} ) \in \mathbb{R}^{n},
                \\
                &\Lambda_{3} \geq s ^{5} \xi^{5} O(\lambda^{8})  + Ts^{4}\xi^{6}O(\lambda^{4}) . &&
            \end{alignedat}
        \end{cases}\vspace{-1mm}
    \end{equation}
    for any $ t \in (0,T) $ and $ x \in G $.
\end{proposition}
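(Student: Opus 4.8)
The estimate \cref{eq.estimatesOrderInequalities} is an entirely computational statement, to be proved exactly as \cite[Proposition 2.2]{Wang}; here I only describe the mechanism. The ingredients are: (a) the explicit weights \cref{eq.alpha}, which give $\xi_{x_{i}}=\lambda\eta_{x_{i}}\xi$ and $\alpha_{x_{i}}=\lambda\eta_{x_{i}}\xi$ for $i=1,\dots ,n$ — the two spatial gradients coincide because $\alpha-\xi$ does not depend on $x$; (b) the elementary bounds \cref{eqEstimatesOrder}; and (c) the regularity $\eta\in C^{4}(\overline G)$ from \cref{eq.eta}, so that $\eta$ and its derivatives of order $\le 4$ are bounded on $\overline G$. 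The first of these has the effect that differentiating in $x$ a quantity of the form
\[
  s^{a}\lambda^{b}\xi^{c}\,p\bigl(\nabla\eta,\nabla^{2}\eta,\nabla^{3}\eta\bigr),
\]
with $p$ polynomial in the derivatives of $\eta$, produces a finite sum of terms of the same shape with the \emph{same} exponents $a,c$ and with exponent in $\lambda$ equal to $b$ or $b+1$; iterating, $k$ derivatives in $x$ raise the power of $\lambda$ by at most $k$ while preserving the powers of $s$ and $\xi$. Since each of $\Lambda_{1}^{ijkl}$, $\Lambda_{2}^{ij}$, $\Lambda_{3}$ in \cref{thm.fundamentalIndentity} is built from at most a few $x$-derivatives of products of the $\Phi$'s and $\Psi$'s, after full expansion it is a finite sum of monomials $s^{a}\lambda^{b}\xi^{c}$ multiplied by bounded smooth functions of $x$.

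The plan is then to determine, for each $\Lambda_{i}$, the dominant $(s,\xi)$-homogeneity $(a,c)$ and the largest $\lambda$-exponent $b$ arising there. For $\Lambda_{1}$ only $\Phi_{2}$ and $\Phi_{3}$ enter, each of size $s\lambda^{2}\xi$, differentiated at most twice; hence every entry of $(\Lambda_{1}^{ijkl})$ is $O(s\lambda^{4}\xi)$ and $\sum_{i,j,k,l}\Lambda_{1}^{ijkl}\zeta_{ij}\zeta_{kl}\ge s\xi\,O(\lambda^{4})\,|\zeta|^{2}$ — here one needs only the order of the leading term, not its sign. For $\Lambda_{2}$ the governing contributions are three $x$-derivatives of $\Phi_{4}$ (size $s^{3}\lambda^{3}\xi^{3}$), two $x$-derivatives of $\Phi_{5}$ (size $s^{3}\lambda^{4}\xi^{3}$), and at most two $x$-derivatives of the products $\Psi_{2}\Phi_{3}$, $\Psi_{3}\Phi_{2}$, $\Psi_{4}\Phi_{1}$, $\Psi_{3}\Phi_{3}$, $\Psi_{4}\Phi_{2}$, $\Psi_{4}\Phi_{3}$, all of which lie in the family $s^{3}\lambda^{b}\xi^{3}$ with $b\le 6$; the monomials of strictly lower order in $s$ (produced by the $-\lambda$ summand of $\Phi_{2}$) and the $\xi$-deficient or $\alpha_{t}$-type remainders are absorbed, using \cref{eqEstimatesOrder} in the forms $\xi^{-1}\le T/2$ and $|\alpha_{t}|\le\frac{T}{2}\xi^{3}$, into $s^{3}\xi^{3}O(\lambda^{6})$ and $Ts^{2}\xi^{4}O(\lambda^{2})$ respectively, once $s$ and $\lambda$ are large. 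For $\Lambda_{3}$ one proceeds identically: it is built from four $x$-derivatives of $\Phi_{5}$ (which only gives the dominated family $s^{3}\lambda^{8}\xi^{3}$) together with at most three $x$-derivatives of $\Psi_{2}\Phi_{5}$, $\Psi_{3}\Phi_{5}$, $\Psi_{4}\Phi_{5}$, $\Psi_{5}\Phi_{1}$, $\Psi_{5}\Phi_{2}$, all in the family $s^{5}\lambda^{b}\xi^{5}$ with $b\le 8$, while the $\alpha_{t}$-dependent and $\xi$-deficient remainders land, again via \cref{eqEstimatesOrder}, in $Ts^{4}\xi^{6}O(\lambda^{4})$; this yields the third inequality. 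The artificial fractional-power correction $-s^{3}\lambda^{7/2}\xi^{3}|\nabla\eta|^{4}$ in $\Phi_{5}$ is harmless here, as its derivatives only produce monomials of $\lambda$-exponent strictly below $8$ within the same $(s,\xi)$-families.

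The main obstacle is thus purely the volume of the bookkeeping: each $\Lambda_{i}$ unfolds into a large number of monomials, and one must check that none of them exceeds the stated order and that every ``off-family'' remainder — those coming from the deliberately inserted lower-order terms ($-\lambda$ in $\Phi_{2}$, $-s^{3}\lambda^{7/2}\xi^{3}|\nabla\eta|^{4}$ in $\Phi_{5}$) and from the $\alpha_{t}$-factors — really falls into one of the allowed $O(\cdot)$-classes after a single use of \cref{eqEstimatesOrder}. There is no conceptual difficulty: the genuine top-order parts of $\Lambda_{2}$ and $\Lambda_{3}$ are in fact nonnegative, and positive on $\overline{G\backslash G_{3}}$ thanks to $|\nabla\eta|\ge C>0$ there in \cref{eq.eta}, but this refinement is not needed for \cref{eq.estimatesOrderInequalities}, since in the proof of \cref{thm.carlemanForwardPre} the $\Lambda_{i}$-terms will ultimately be absorbed by the strictly larger positive contributions coming from $K_{1}^{2}$ and from $M$. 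Because the pointwise identity \cref{eq.finalEquation} and the coefficients $\Phi$, $\Psi$ differ from those of \cite{Wang} only through the new $\nabla\Delta\psi$-related terms, the computation is a routine adaptation of the one carried out there.
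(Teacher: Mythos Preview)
Your proposal is correct and matches the paper's approach: the paper does not prove \cref{prop.estimatesOrder} at all but simply states it as an analogue of \cite[Proposition 2.2]{Wang}, and your write-up faithfully describes the purely computational mechanism behind that reference (tracking the $(s,\lambda,\xi)$-orders of each monomial in $\Lambda_{1},\Lambda_{2},\Lambda_{3}$ using $\xi_{x_i}=\lambda\eta_{x_i}\xi$ and \cref{eqEstimatesOrder}). There is nothing to add.
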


From the Carleman inequality for the Laplace operator with homogeneous Dirichlet boundary conditions (see \cite[Lemma 2.4]{Guerrero2019}), we obtain the following result.

\begin{lemma}\label{lemma.LaplaceCarleman}\cite[Lemma 2.3]{Wang}
    Let $ q \in H^{2}(G)\cap H_0^1(G) $ and $ \eta $ be given by \eqref{eq.eta}. Then there exists a constant $C>0$ independent of $s$ and $\lambda$, and parameters $\hat{\lambda}>1$ and $\hat{s}>1$ such that for all $\lambda \geq \hat{\lambda}$ and for all $ s  \geq \hat{s}  $, 
    \begin{align} \label{eq.LaplaceCarleman} \notag
        & \int_{Q}  s^{6} \lambda^{8} \xi^{6}\theta^{2} |q|^{2} d x d t\!+\!  \int_{Q}   s^{4} \lambda^{6}  \xi^{4}\theta^{2} |\nabla q|^{2} d x d t
        \\
        & 
        \leq  C  \Big(  \int_{(0, T)\times G_{2}}   s^{6} \lambda^{8} \xi^{6}\theta^{2} |q|^{2} d x d t+   \int_{Q}   s^{3} \lambda^{4}  \xi^{3}\theta^{2} |\Delta q|^{2} d x d t \Big
        ).
    \end{align}
\end{lemma}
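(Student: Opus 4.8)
This is \cite[Lemma 2.3]{Wang}: I would derive \eqref{eq.LaplaceCarleman} from the classical second–order elliptic Carleman estimate with homogeneous Dirichlet data, \cite[Lemma 2.4]{Guerrero2019}, by a weight shift. In the present notation that estimate supplies constants $C>0$, $\hat\lambda>1$, $\hat s>1$ such that for all $\lambda\ge\hat\lambda$, $s\ge\hat s$ and every $v\in H^{2}(G)\cap H^{1}_{0}(G)$,
\[
\int_{Q}\big(s^{3}\lambda^{4}\xi^{3}|v|^{2}+s\lambda^{2}\xi|\nabla v|^{2}\big)\theta^{2}\,dx\,dt\le C\Big(\int_{Q_{2}}s^{3}\lambda^{4}\xi^{3}|v|^{2}\theta^{2}\,dx\,dt+\int_{Q}\theta^{2}|\Delta v|^{2}\,dx\,dt\Big);
\]
if \cite{Guerrero2019} states this only for a purely spatial weight, one freezes $t$, uses the Carleman parameter $s/[t^{1/2}(T-t)^{1/2}]$ (bounded below uniformly in $t$ since $t^{1/2}(T-t)^{1/2}\le T/2$, cf.\ \eqref{eqEstimatesOrder}), and integrates in $t$. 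I would apply this to $v:=s^{3/2}\lambda^{2}\xi^{3/2}q$, which for a.e.\ $t$ lies in $H^{2}(G)\cap H^{1}_{0}(G)$: indeed $\xi^{3/2}(\cdot,t)\in C^{2}(\overline{G})$ since $\eta\in C^{4}(\overline{G})$ by \eqref{eq.eta}, it is bounded and bounded away from $0$, and $q|_{\Gamma}=0$.

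\textbf{The computation.} Since $\nabla\xi=\lambda\xi\,\nabla\eta$, one has $\nabla v=s^{3/2}\lambda^{2}\xi^{3/2}\big(\nabla q+\tfrac32\lambda(\nabla\eta)q\big)$ and $\Delta v=s^{3/2}\lambda^{2}\xi^{3/2}\big(\Delta q+3\lambda\,\nabla\eta\!\cdot\!\nabla q+(\tfrac94\lambda^{2}|\nabla\eta|^{2}+\tfrac32\lambda\Delta\eta)q\big)$, so that $|v|^{2}=s^{3}\lambda^{4}\xi^{3}|q|^{2}$ and, with a generic $C$ depending on $\eta$,
\[
s\lambda^{2}\xi|\nabla v|^{2}\ge\tfrac12 s^{4}\lambda^{6}\xi^{4}|\nabla q|^{2}-C s^{4}\lambda^{8}\xi^{4}|q|^{2},\qquad |\Delta v|^{2}\le C s^{3}\lambda^{4}\xi^{3}\big(|\Delta q|^{2}+\lambda^{2}|\nabla q|^{2}+\lambda^{4}|q|^{2}\big).
\]
Inserting these into the estimate above: on the left the leading contributions are $s^{6}\lambda^{8}\xi^{6}\theta^{2}|q|^{2}$ and $\tfrac12 s^{4}\lambda^{6}\xi^{4}\theta^{2}|\nabla q|^{2}$, up to the term $C s^{4}\lambda^{8}\xi^{4}\theta^{2}|q|^{2}$; the local term on the right becomes $s^{6}\lambda^{8}\xi^{6}\theta^{2}|q|^{2}$ over $Q_{2}$; and the source term is controlled (after integrating over $Q$) by $C s^{3}\lambda^{4}\xi^{3}\theta^{2}|\Delta q|^{2}+C s^{3}\lambda^{6}\xi^{3}\theta^{2}|\nabla q|^{2}+C s^{3}\lambda^{8}\xi^{3}\theta^{2}|q|^{2}$.

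\textbf{Absorption.} By \eqref{eqEstimatesOrder} one has $\xi^{-1}\le T/2$, whence
\[
s^{3}\lambda^{6}\xi^{3}\le\tfrac{T}{2}\,s^{3}\lambda^{6}\xi^{4},\qquad s^{3}\lambda^{8}\xi^{3}\le\big(\tfrac{T}{2}\big)^{3}s^{3}\lambda^{8}\xi^{6},\qquad s^{4}\lambda^{8}\xi^{4}\le\big(\tfrac{T}{2}\big)^{2}s^{4}\lambda^{8}\xi^{6}.
\]
Each of the three majorants has the same $\lambda$- and $\xi$-powers as a leading left–hand term ($s^{4}\lambda^{6}\xi^{4}|\nabla q|^{2}$ for the first, $s^{6}\lambda^{8}\xi^{6}|q|^{2}$ for the other two) but a strictly smaller power of $s$; hence, after enlarging $\hat s$, all three are absorbed into the left–hand side, and relabelling $C$ gives exactly \eqref{eq.LaplaceCarleman}.

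\textbf{Main difficulty.} There is no conceptual obstacle — the statement is just the elliptic Carleman estimate read at a higher weight level. The only point needing care is the bookkeeping in the absorption step, i.e.\ checking that every term produced when the Laplacian or gradient falls on the weight factor $\xi^{3/2}$ is, for $s$ large, dominated by one of the two leading terms on the left; the exponents in the ansatz $v=s^{3/2}\lambda^{2}\xi^{3/2}q$ are chosen precisely so that this closes. Verifying that $v$ is an admissible test function and reducing the time–dependent weight to the standard elliptic setting at frozen $t$ are routine.
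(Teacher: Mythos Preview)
Your proposal is correct and follows precisely the approach indicated in the paper: the paper does not give its own proof but simply cites \cite[Lemma 2.3]{Wang} and notes that it is obtained from the elliptic Carleman estimate \cite[Lemma 2.4]{Guerrero2019}, and your weight-shift argument with $v=s^{3/2}\lambda^{2}\xi^{3/2}q$ is exactly the standard way to carry this out. The bookkeeping and absorption you describe are accurate.
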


Let
\begin{equation}
    \label{eq.alphastar}
\alpha_{\star}(t)=\frac{e^{\lambda|2 \eta|_{C(\overline{G})}}-e^{4 \lambda|\eta|_{C(\overline{G})}}}{t^{1/2}(T-t)^{1/2}}, \quad \quad
\xi_{\star}(t)=\frac{e^{\lambda|2 \eta|_{C(\overline{G})}}}{t^{1/2}(T-t)^{1/2}}.
\end{equation}

\begin{lemma}\label{lemma.auxiliaryEstimates}
    There exists a constant $ C>0 $, such that for 
    $ \varphi \in  L_{\mathbb{F}}^{2}(0, T ; H^{4}(G)) , f \in L_{\mathbb{F}}^{2}(0, T ; L^{2}(G))$ and $ g \in  L_{\mathbb{F}}^{2}(0, T ; H^{2}(G)) $  satisfying 
    \begin{equation}
        \label{eqauxiliaryEstimatesEq1}
        \left\{
        \begin{alignedat}{2}
            &  d \varphi  + \Delta^2 \varphi  d t=    f d t+g d W(t) && \quad \text { in } Q, \\
            & \varphi = \frac{\partial \varphi}{\partial \nu} =0 && \quad \text { on } \Sigma,
        \end{alignedat}
        \right. 
    \end{equation}
    for $ s\geq C T^{1/2} $, it holds that
    \begin{align*}
        \notag
        &\mathbb{E} \int_{\Sigma} \Big[
        s^{\frac{9}{4}} \lambda^{3} \xi^{\frac{9}{4}} \theta^{2} |\nabla^{2} \varphi|^{2}
        + s^{\frac{3}{4}} \lambda     \xi^{\frac{3}{4}} \theta^{2} |\nabla^{3} \varphi|^{2}
        \Big]d \Gamma dt
        + \mathbb{E} \int_{0}^{T} e^{2s\alpha_{\star}} |\varphi|^{2}_{H^{4}(G)} dt
        \\
        &
        \leq
        C \mathbb{E} \int_{Q}  s^{6} \lambda^{8} \xi^{6} \theta^{2} |\varphi|^{2} dxdt
        +  C  \mathbb{E} \int_{Q}  \theta^{2}
        (
            |f|^{2} 
            +  | \Delta g| ^{2}
            + s^{2} \lambda^{2} \xi^{2}|\nabla g|  ^{2}
            +  s^{4} \lambda^{4} \xi^{4} g ^{2}
        )d x d t
        .
    \end{align*}
\end{lemma}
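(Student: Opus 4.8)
The plan is to pass to an $x$-independent Carleman weight and then combine weighted energy estimates for the equation with trace and interpolation inequalities. Set $\theta_\star=e^{s\alpha_\star}$, $w=\theta_\star\varphi$, with $\alpha_\star,\xi_\star$ as in \eqref{eq.alphastar}. Since $\alpha_\star$ depends only on $t$, the weight commutes with spatial differentiation: $\nabla^kw=\theta_\star\nabla^k\varphi$, $\Delta^j(\theta_\star\phi)=\theta_\star\Delta^j\phi$ and $|w|_{H^4(G)}^2=\theta_\star^2|\varphi|_{H^4(G)}^2$; and since $\eta|_\Gamma=0$, one has $\alpha=\alpha_\star$, $\xi=\xi_\star$ on $\Sigma$, so $\theta^2|\nabla^k\varphi|^2=|\nabla^kw|^2$ there. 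Moreover $w(0)=w(T)=0$ ($\theta_\star$ vanishes to infinite order at $t=0,T$, while $\varphi\in C([0,T];L^2(G))$ $\mathbb{P}$-a.s.), $w=\partial_\nu w=0$ on $\Sigma$ (hence also $dw=\partial_\nu(dw)=0$ on $\Sigma$), and It\^o's formula gives that $w$ solves $dw+\Delta^2w\,dt=\widehat f\,dt+\widehat g\,dW$ with $\widehat f=s(\alpha_\star)_tw+\theta_\star f$ and $\widehat g=\theta_\star g$. Thus the lemma is equivalent to bounding $\mathbb{E}\int_\Sigma\big(s^{9/4}\lambda^3\xi_\star^{9/4}|\nabla^2w|^2+s^{3/4}\lambda\xi_\star^{3/4}|\nabla^3w|^2\big)\,d\Gamma\,dt+\mathbb{E}\int_0^T|w|_{H^4(G)}^2\,dt$ by the right-hand side, denoted $\mathcal{R}$.

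For the interior, I would prove the two weighted energy estimates
\[
\mathbb{E}\int_0^T s^3\lambda^4\xi_\star^3|w|_{H^2(G)}^2\,dt\le C\mathcal{R},\qquad \mathbb{E}\int_0^T|w|_{H^4(G)}^2\,dt\le C\mathcal{R}.
\]
The first follows from It\^o's formula applied to $t\mapsto s^3\lambda^4\xi_\star^3(t)|w(t)|_{L^2(G)}^2$, integrating by parts twice in $\langle w,\Delta^2w\rangle$; the boundary contributions vanish because $w=\partial_\nu w=0$ on $\Sigma$, leaving $\langle w,\Delta^2w\rangle_{L^2(G)}=|\Delta w|_{L^2(G)}^2\ge c\,|w|_{H^2(G)}^2$ by elliptic regularity for the Dirichlet Laplacian. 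In the resulting identity the time derivative of the weight and the $s(\alpha_\star)_tw$ part of $\widehat f$ are controlled by \eqref{eqEstimatesOrder} (and its analogue for $\alpha_\star,\xi_\star$) together with the hypothesis $s\ge CT^{1/2}$, which for $s,\lambda$ large give $s^3\lambda^4\xi_\star^5\theta_\star^2\le s^6\lambda^8\xi^6\theta^2$ and $s^2(\alpha_\star)_t^2\theta_\star^2\le s^6\lambda^8\xi^6\theta^2$ (using $\alpha_\star\le\alpha$, $\xi_\star\le\xi$); the contribution of $\theta_\star f$ is \emph{not} absorbed but estimated by Young's inequality using $(s^3\lambda^4\xi_\star^3)^2\le s^6\lambda^8\xi^6$ and $\theta_\star\le\theta$, producing exactly $C\mathbb{E}\int_Q\theta^2|f|^2$; and the It\^o correction $\mathbb{E}\int_0^Ts^3\lambda^4\xi_\star^3|\widehat g|_{L^2(G)}^2\,dt$ yields $C\mathbb{E}\int_Qs^4\lambda^4\xi^4\theta^2g^2$. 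The second estimate is obtained in the same spirit, using the biharmonic elliptic estimate $|w|_{H^4(G)}\le C|\Delta^2w|_{L^2(G)}$ for the clamped operator; here the stochastic integrand enters through $|\Delta\widehat g|_{L^2(G)}^2=\theta_\star^2|\Delta g|_{L^2(G)}^2$ and lower-order terms in $g$, which account for the remaining $\theta^2(|\Delta g|^2+s^2\lambda^2\xi^2|\nabla g|^2+s^4\lambda^4\xi^4g^2)$ on the right-hand side.

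The boundary terms then follow from the multiplicative trace inequality $\|u\|_{L^2(\Gamma)}^2\le C\|u\|_{L^2(G)}\|u\|_{H^1(G)}$ and the interpolation inequality $|w|_{H^3(G)}\le C|w|_{H^2(G)}^{1/2}|w|_{H^4(G)}^{1/2}$. Applying the trace inequality componentwise to $\nabla^2w$ and $\nabla^3w$ gives, pointwise in $(t,\omega)$,
\[
|\nabla^2w|_{L^2(\Gamma)}^2\le C|w|_{H^2(G)}^{3/2}|w|_{H^4(G)}^{1/2},\qquad |\nabla^3w|_{L^2(\Gamma)}^2\le C|w|_{H^2(G)}^{1/2}|w|_{H^4(G)}^{3/2}.
\]
Multiplying the first by $(s^3\lambda^4\xi_\star^3)^{3/4}=s^{9/4}\lambda^3\xi_\star^{9/4}$ and the second by $(s^3\lambda^4\xi_\star^3)^{1/4}=s^{3/4}\lambda\xi_\star^{3/4}$ and using Young's inequality, both weighted boundary densities are bounded by $C\big(s^3\lambda^4\xi_\star^3|w|_{H^2(G)}^2+|w|_{H^4(G)}^2\big)$; integrating over $Q$ and taking expectations reduces them to the two interior estimates, hence to $C\mathcal{R}$. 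Translating back via $w=\theta_\star\varphi$, $\alpha|_\Sigma=\alpha_\star$, $\xi|_\Sigma=\xi_\star$ and $|w|_{H^4(G)}^2=\theta_\star^2|\varphi|_{H^4(G)}^2$ yields the assertion.

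The main obstacle is the $H^4$ energy estimate: the stochastic term must be handled with care, since a formal integration by parts against $\Delta^2w$ produces boundary integrals of fifth-order derivatives that cancel only after a Galerkin (spectral) approximation exploiting self-adjointness of the clamped biharmonic operator; and one must track the powers of $s,\lambda,\xi$ and the $T$-factors precisely enough that the source terms land exactly in $C\mathbb{E}\int_Q\theta^2(|f|^2+|\Delta g|^2+s^2\lambda^2\xi^2|\nabla g|^2+s^4\lambda^4\xi^4g^2)$ and the remainder in $C\mathbb{E}\int_Qs^6\lambda^8\xi^6\theta^2|\varphi|^2$. The intermediate weight $s^3\lambda^4\xi_\star^3$ is forced by two constraints: its square must not exceed the leading weight $s^6\lambda^8\xi^6$ (so the $f$-contribution is absorbed into the first right-hand term), and its $3/4$- and $1/4$-powers must reproduce precisely the boundary weights $s^{9/4}\lambda^3\xi^{9/4}$ and $s^{3/4}\lambda\xi^{3/4}$.
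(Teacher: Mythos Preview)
Your approach is correct and close in spirit to the paper's, but you take an unnecessary detour. The paper also sets $\varphi_\star=e^{s\alpha_\star}\varphi$ (your $w$) and obtains the $H^4$ estimate $\mathbb{E}\int_0^T e^{2s\alpha_\star}|\varphi|_{H^4(G)}^2\,dt\le C\mathcal{R}$ from a separate regularity lemma proved by exactly the Galerkin/spectral argument you describe (your ``main obstacle'' is the content of Lemma~A.1). The difference is in the treatment of the boundary terms: instead of interpolating between $H^2$ and $H^4$ and therefore needing an additional weighted $H^2$ energy estimate, the paper interpolates all the way down to $L^2$, using
\[
|\varphi|_{H^2}|\varphi|_{H^3}\le C|\varphi|_{L^2}^{3/4}|\varphi|_{H^4}^{5/4},\qquad
|\varphi|_{H^3}|\varphi|_{H^4}\le C|\varphi|_{L^2}^{1/4}|\varphi|_{H^4}^{7/4},
\]
together with $\alpha|_\Gamma=\alpha_\star$, $\xi|_\Gamma=\xi_\star$. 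Young's inequality with exponents $8/3,8/5$ (resp.\ $8,8/7$) then gives
\[
s^{9/4}\lambda^3\xi_\star^{9/4}e^{2s\alpha_\star}|\varphi|_{H^2}|\varphi|_{H^3}
\le C\,s^6\lambda^8\xi_\star^6 e^{2s\alpha_\star}|\varphi|_{L^2}^2+C\,e^{2s\alpha_\star}|\varphi|_{H^4}^2,
\]
and the first term is dominated by $\int_G s^6\lambda^8\xi^6\theta^2|\varphi|^2\,dx$ since $\xi_\star\le\xi$, $\alpha_\star\le\alpha$. Thus the $L^2$-weighted piece is \emph{already} the first term on the right-hand side of the lemma, and no separate $H^2$ energy identity is needed. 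Your route works, but the paper's interpolation choice makes the $s^3\lambda^4\xi_\star^3$ intermediate weight---and the It\^o computation for it---superfluous.
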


The proof of \cref{lemma.auxiliaryEstimates} is similar to that of \cite[Lemma 2.4]{Wang}, which is put in \cref{secProofLemma} for the sake of completeness.

\section{Proof of Carleman estimate} \label{sc.3}

\begin{proof}[Proof of \cref{thm.carlemanForwardPre}]

By a density argument,  we assume $ \varphi \in L_{\mathbb{F}}^{2}(0, T ; H^{4}(G)) $.

In order to shorten the formulas used in the sequel, we define the lower order terms  
\begin{align}
    \notag
    &\mathcal{A}_{1} \deq s^{5}  \mathbb{E}\int_{Q} \xi^{5}[O(\lambda^{8})+s  \xi O(\lambda^{7})]|\psi|^{2} d x d t ,
    & &
     \mathcal{A}_{2} \deq s^{3}  \mathbb{E}\int_{Q} \xi^{3}[O( \lambda^{6} )+s \xi O( \lambda^{5} )]|\nabla \psi|^{2} d x d t,
    \\ \label{eq.pfthm2.1.S8}
    &\mathcal{A}_{3} \deq s  \mathbb{E}\int_{Q} \xi[O( \lambda^{4} )+s \xi O( \lambda^{3} )]|\nabla^{2} \psi|^{2} d x d t,
    & &
    \mathcal{A}_{4} \deq    \mathbb{E}\int_{Q} O(1 ) | \nabla \Delta \psi|^{2} d x d t,
     \\ \notag
    & 
    \mathcal{B}_{1} \deq  s^{2} \mathbb{E} \int_{\Sigma} \xi^{2} O ( \lambda^{3} ) \theta^{2}|\nabla^{2} \varphi|^{2} d \Gamma d t,
    & &
    \mathcal{B}_{2} \deq  s^{1/2}  \mathbb{E} \int_{\Sigma} \xi^{1/2} O ( \lambda ) \theta^{2}|\nabla^{3} \varphi|^{2} d \Gamma d t,
\end{align}
and
\begin{equation*}
    \mathcal{A}=\mathcal{A}_{1}+\mathcal{A}_{2}+\mathcal{A}_{3}  +\mathcal{A}_{4},
    \quad \quad
    \mathcal{B}=\mathcal{B}_{1}+\mathcal{B}_{2}.
\end{equation*}
Integrating the equality \cref{eq.finalEquation} with $ \kappa = 1 $ on $Q$, taking mathematical expectation in both sides, and noting \cref{eqEstimatesOrder}, \cref{eq.estimatesOrderInequalities}, we conclude that, there exists a $ s_{1} \geq C T^{1/2} $, such that for $ s \geq s_{1} $, it holds that 
\begin{align}\label{eq.thm31S1}
    \notag
    & 2 \mathbb{E}\int_{Q} \theta K_{1} (  d \varphi + \Delta^{2} \varphi dt ) d x
    - 2  \mathbb{E}\int_{Q} \operatorname{div} ( V_{1} + V_{2} )dxdt
    \\ \notag
    & 
    - 2 \mathbb{E}\int_{Q} \sum_{i, j=1}^n (
   \psi_{x_{i}x_{i}x_{j}} d \psi - \psi_{x_{i}x_{j}}d \psi_{x_{i}}
   +\Psi_{2} \psi_{x_{i}} \delta_{ij} d \psi
   +\Psi_{3}^{ij} \psi_{x_{i}} d \psi
   )_{x_{j}} d x
   \\ \notag
    & \geq
   2 \mathbb{E}\int_{Q} K_{1}^{2} dxdt
   + 2 \mathbb{E}\int_{Q} K_{1} K_{3} d x
   + 2 \mathbb{E}\int_{Q} M dxdt
   \\ \notag
    & \quad
   - \mathbb{E}\int_{Q} \theta^{2} \sum_{i, j=1}^n\big[  d \varphi_{x_{i}x_{j}} + 2 \ell_{x_{j}} d \varphi_{x_{i}} +   ( \ell_{x_{i}x_{j}} + \ell_{x_{i}} \ell_{x_{j}} ) d \varphi \big]^{2} d x
   + \mathbb{E}\int_{Q} \theta^{2}  \Psi_{2} | d \nabla \varphi     + \nabla \ell d \varphi|^{2} d x
    \\ 
    & \quad
    + 4 \mathbb{E}\int_{Q}  s^{2}\lambda^{2}\xi^{2} \theta^{2} | \nabla \eta d \nabla \varphi
    + \nabla \eta \nabla \ell d \varphi|^{2} d x
    -  \mathbb{E}\int_{Q}  \Psi_{6} \theta^{2} ( d \varphi )^{2} d x + \mathcal{A}
   .
\end{align}

We will estimate the divergence terms, the interior terms  and the stochastic terms in the following Steps 1--3.

{\bf Step 1}. In this step, we deal with the divergence terms.

Firstly, we estimate $ V_{1} $. Since $ \varphi = \frac{\partial \varphi}{\partial \nu} = 0 $  and $ \psi = \theta \varphi $, we have $ \psi = 0 $ and  $\nabla \psi = 0 $ on  the boundary $ \Sigma $.
Hence, on  $ \Sigma $, we obtain 
\begin{align} \label{eqCarlemanBoundary1}
    \notag
    V_{1} \cdot \nu 
    & =  \sum_{i,j,k=1}^n\Big[ 
    \sum_{l=1}^n \Phi_{1}^{l} \psi_{x_{k}x_{k}x_{l}} \psi_{x_{i}x_{i}x_{j}}  
    -  \frac{1}{2} \sum_{l=1}^n  \Phi_{1}^{j} \psi_{x_{k}x_{k}x_{l}} \psi_{x_{i}x_{i}x_{l}}
    + \frac{1}{2}  \Psi_{2} \Phi_{1}^{j}\psi_{x_{i}x_{i}}\psi_{x_{k}x_{k}}
    \\ \notag
    &  \quad \quad \quad \ \
    + \sum_{l=1}^{n} \Psi_{3}^{ik}\Phi_{1}^{l} \psi_{x_{i}x_{k}}\psi_{x_{l}x_{j}}
    -  \frac{1}{2}  \sum_{l=1}^{n}\Psi_{3}^{ij}\Phi_{1}^{l} \psi_{x_{i}x_{k}}\psi_{x_{k}x_{l}}
    - \Phi_{4}^{k} \psi_{x_{i}x_{j}} \psi_{x_{i}x_{k}}
    + \frac{1}{2} \Phi_{4}^{j}\psi_{x_{i}x_{k}}^{2}
    \Big] \nu^{j},
    \\ \notag
    & =
    - 4 \sum_{i,j,k,p = 1}^{n}  s \lambda \xi  \eta_{x_{p}} \psi_{x_{k}x_{k}x_{p}} \psi_{x_{i}x_{i}x_{j}} \nu^{j}
    +2  \sum_{i,k,p=1}^{n}  s \lambda \xi  \frac{\partial \eta}{\partial \nu} \psi_{x_{k}x_{k}x_{p}} \psi_{x_{i}x_{i}x_{p}} 
    \\ \notag
    & \quad 
    - 4 \sum_{i,k=1}^{n}    s^{3} \lambda^{3} \xi^{3} \bigg(\frac{\partial \eta}{\partial \nu}\bigg)^{3}   \psi_{x_{i}x_{i}} \psi_{x_{k}x_{k}}   
    -16  \sum_{i,j,k,p=1}^{n}    s^{3} \lambda^{3} \xi^{3} \bigg(\frac{\partial \eta}{\partial \nu}\bigg)^{3}  \psi_{x_{i}x_{k}} \nu^{i} \nu^{k} \psi_{x_{p}x_{j}} \nu^{p} \nu^{j}
    \\
    & \quad 
    + 12 \sum_{i,j,k =1}^{n}  s^{3} \lambda^{3} \xi^{3} \bigg(\frac{\partial \eta}{\partial \nu}\bigg)^{3}  \psi_{x_{i}x_{j}}  \nu^{j} \psi_{x_{i}x_{k}} \nu^{k}
    -2 \sum_{i,k=1}^{n}  s^{3} \lambda^{3} \xi^{3} \bigg(\frac{\partial \eta}{\partial \nu}\bigg)^{3} \psi_{x_{i}x_{k}}^{2} 
    .
\end{align}
Since $ \nabla \psi = 0 $ on $ \Sigma $, it holds that 
\begin{align}
    \label{eqCarlemanBoundary2}
    \sum_{i,j=1}^{n} \psi_{x_{i}x_{j}} \nu^{i} \nu^{j} 
    = \sum_{i,j=1}^{n} \left( \psi_{x_{i}} \nu^{i} \nu^{j} \right)_{x_{j}} - \sum_{i,j=1}^{n}  \psi_{x_{i}} \left( \nu^{i} \nu^{j} \right)_{x_{j}}
    = \sum_{j=1}^{n} \bigg( \frac{\partial \psi}{\partial \nu} \nu^{j} \bigg)_{x_{j}}
    = \sum_{j=1}^{n} \psi_{x_{j}x_{j}},
\end{align}
and 
\begin{align}
    \label{eqCarlemanBoundary3}
    \notag
    \sum _{i,j=1}^{n} \psi_{x_{i}x_{j}}^{2} 
    & = \sum_{i=1}^{n} \Big( \sum_{j=1}^{n} \psi_{x_{i}x_{j}}^{2}  \Big)
    = \sum_{i=1}^{n} \left( |\nabla \psi_{x_{i}}|^{2}  \right)
    = \sum_{i=1}^{n} \left( |\nabla \psi_{x_{i}} \cdot \nu|^{2}  \right)
    \\
    &
    = \sum_{i=1}^{n} \Big( \sum_{j=1}^{n} \psi_{x_{i}x_{j}} \nu^{j}  \Big)^{2}
    = \sum_{i,j,k=1}^{n} \psi_{x_{i}x_{j}} \nu^{j} \psi_{x_{i}x_{k}} \nu^{k}
    .
\end{align}
Combining \cref{eqCarlemanBoundary1,eqCarlemanBoundary2,eqCarlemanBoundary3}, on the boundary $ \Sigma $, we deduce that 
\begin{align} \label{eqCarlemanBoundary4} 
    \notag
    V_{1} \cdot \nu  
    &= 
    -4 \sum_{i,j,k,p=1}^{n}  s \lambda \xi  \eta_{x_{p}} \psi_{x_{k}x_{k}x_{p}} \psi_{x_{i}x_{i}x_{j}} \nu^{j}
    + 2 \sum_{i,k,p=1}^{n}   s \lambda \xi  \frac{\partial \eta}{\partial \nu} \psi_{x_{k}x_{k}x_{p}} \psi_{x_{i}x_{i}x_{p}} 
    \\
    & \quad 
    -20 \sum_{i,k=1}^{n}    s^{3} \lambda^{3} \xi^{3} \bigg(\frac{\partial \eta}{\partial \nu}\bigg)^{3}   \psi_{x_{i}x_{i}} \psi_{x_{k}x_{k}}   
    + 10  \sum_{i,j=1}^{n}  s^{3} \lambda^{3} \xi^{3} \bigg(\frac{\partial \eta}{\partial \nu}\bigg)^{3}  \psi_{x_{i}x_{j}}^{2} .
\end{align}

We claim that, on the boundary $ \Sigma $,
\begin{align} 
    \label{eqCarlemanBoundary5}
    \sum_{i,j=1}^{n} \psi_{x_{i}x_{i}} \psi_{x_{j}x_{j}} = \sum_{i,j=1}^{n} \psi_{x_{i}x_{j}}^{2}.
\end{align}
In fact, for $ i =1 , \cdots, n $, since $ \psi_{x_{i}}=0 $ on $ \Sigma $, we can write
$ \nabla \psi_{x_{i}} = \mathscr{K}_{i} \nu $,
where $ \mathscr{K}_{i}=\frac{\partial \psi_{i}}{\partial \nu}$, which means $ \psi_{x_{i}x_{j}} = \mathscr{K}_{i} \nu^{j} $ for all $ j =1 , \cdots, n $. 
Notice that, for $ i,j= 1 , \cdots, n $,  $ \psi_{x_{i}x_{j}}=\psi_{x_{j}x_{i}} $.
We obtain $ \mathscr{K}_{i}\nu^{j}=\mathscr{K}_{j} \nu^{i} $.
Hence, it holds that 
\begin{align*}
    \notag
    \sum_{i,j=1}^{n} \psi_{x_{i}x_{i}} \psi_{x_{j}x_{j}}
    &= \sum_{i,j=1}^{n} \mathscr{K}_{i}\nu^{i}\mathscr{K}_{j}\nu^{j}
    = \sum_{i,j=1}^{n} \mathscr{K}_{i}\nu^{j}\mathscr{K}_{j}\nu^{i}
    = \sum_{i,j=1}^{n} \left( \mathscr{K}_{i}\nu^{j} \right)^{2}
     = \sum_{i,j=1}^{n} \psi_{x_{i}x_{j}}^{2}.
\end{align*} 
Combining \cref{eqCarlemanBoundary4,eqCarlemanBoundary5}, we obtain 
\begin{align} \label{eqCarlemanBoundary6}
    \notag
    V_{1} \cdot \nu  
    &= 
    -4 \sum_{i,j,k,p=1}^{n}  s \lambda \xi  \eta_{x_{p}} \psi_{x_{k}x_{k}x_{p}} \psi_{x_{i}x_{i}x_{j}} \nu^{j}
    + 2 \sum_{i,k,p=1}^{n}   s \lambda \xi  \frac{\partial \eta}{\partial \nu} \psi_{x_{k}x_{k}x_{p}} \psi_{x_{i}x_{i}x_{p}} 
    \\ \notag
    & \quad 
    -10 \sum_{i,k=1}^{n}    s^{3} \lambda^{3} \xi^{3} \bigg(\frac{\partial \eta}{\partial \nu}\bigg)^{3}   \psi_{x_{i}x_{i}} \psi_{x_{k}x_{k}}   
    \\
    & =-4 s \lambda \xi  \frac{\partial \eta}{\partial \nu}   |\nabla \Delta \psi \cdot \nu|^{2}
    + 2  s \lambda \xi  \frac{\partial \eta}{\partial \nu} |\nabla \Delta \psi|^{2} 
    -10  s^{3} \lambda^{3} \xi^{3} \bigg(\frac{\partial \eta}{\partial \nu}\bigg)^{3}  |\Delta \psi|^{2}
    .
\end{align}

Next, we consider the second term in the right hand side of \cref{eqCarlemanBoundary6}.
Put $ \tau \deq \frac{\nabla \Delta \psi - (\nabla \Delta \psi \cdot \nu)\nu}{|\nabla \Delta \psi - (\nabla \Delta \psi \cdot \nu)\nu|}. $ 
We have 
\begin{align}
    \label{eqCarlemanBoundary7}
    2 s \lambda \xi  \frac{\partial \eta}{\partial \nu} |\nabla \Delta \psi|^{2}
     = 2 s \lambda \xi  \frac{\partial \eta}{\partial \nu} \left( |\nabla \Delta \psi \cdot \nu|^{2} + |\nabla \Delta \psi \cdot \tau|^{2} \right).
\end{align}
Using the interpolation inequality, we obtain 
\begin{equation*}
    |\Delta \psi|_{H^{1}(\Gamma)} \leq |\Delta \psi|_{L^{2}(\Gamma)}^{1/3} |\Delta \psi|_{H^{3/2}(\Gamma)}^{2/3},
\end{equation*}
which implies 
\begin{align}
    \label{eqCarlemanBoundary8}
    \notag
    &\mathbb{E} \int_{\Sigma} 2 s \lambda \xi  \frac{\partial \eta}{\partial \nu}  |\nabla \Delta \psi \cdot \tau|^{2} d\Gamma dt
    \\\notag
    & \geq - C \mathbb{E} \int_{0}^{T} s \lambda \xi |\Delta \psi|_{H^{1}(\Gamma)}^{2} dt
    \\\notag
    & \geq - C \mathbb{E} \int_{0}^{T} s \lambda \xi |\Delta \psi|_{L^{2}(\Gamma)}^{2/3} |\Delta \psi|_{H^{3/2}(\Gamma)}^{4/3}  dt
    \\\notag
    & \geq \mathbb{E} \int_{\Sigma} s^{3} \lambda^{3} \xi^{3} \bigg( \frac{\partial \eta}{\partial \nu} \bigg)^{3} |\Delta \psi|^{2} d\Gamma dt
    - C  \mathbb{E} \int_{0}^{T} e^{2s\alpha_{\star}} |\varphi|_{H^{4}( G)}^{2} dt
    \\
    & = \mathbb{E} \int_{\Sigma} s^{3} \lambda^{3} \xi^{3} \bigg( \frac{\partial \eta}{\partial \nu} \bigg)^{3} |\nabla^{2} \psi|^{2} d\Gamma dt
    - C  \mathbb{E} \int_{0}^{T} e^{2s\alpha_{\star}} |\varphi|_{H^{4}( G)}^{2} dt
    ,
\end{align}
where we use the the fact that $  \frac{\partial \eta}{\partial \nu} \leq - C $ on $ \Sigma $ and \cref{eqCarlemanBoundary5}.
Combining \cref{eqCarlemanBoundary6,eqCarlemanBoundary7,eqCarlemanBoundary8}, we obtain 
\begin{align}
    \label{eqCarlemanBoundary9}\notag
    & \mathbb{E}\int_{Q} \operatorname{div}  V_{1} dxdt
    =\mathbb{E} \int_{\Sigma}  V_{1} \cdot \nu d\Gamma dt
    \\\notag
    & \geq \mathbb{E} \int_{\Sigma} \bigg[  
        - 2  s \lambda \xi  \frac{\partial \eta}{\partial \nu} |\nabla \Delta w \cdot \nu|^{2} 
        - 9 s^{3} \lambda^{3} \xi^{3} \bigg(\frac{\partial \eta}{\partial \nu}\bigg)^{3}  |\nabla^{2} w|^{2} 
    \bigg] d\Gamma dt 
    - C  \mathbb{E} \int_{0}^{T} e^{2s\alpha_{\star}} |\varphi|_{H^{4}(G)}^{2} dt
    \\
    & \geq 
    - C  \mathbb{E} \int_{0}^{T} e^{2s\alpha_{\star}} |\varphi|_{H^{4}(G)}^{2} dt
    .
\end{align}

Now, we estimate $ V_{2} $.
From \cref{eq.pfthm2.1.S8,eq.finalEquation} and $ \psi = |\nabla \psi | = 0 $ on the boundary, there exists a constant $ s_{2} \geq C T$, such that  for $ s \geq s_{2} $, we deduce
\begin{align}
    \label{eqCarlemanBoundary10}\notag
    & \mathbb{E}\int_{Q} \operatorname{div}  V_{2} dxdt = \mathbb{E} \int_{\Sigma}  V_{2} \cdot \nu d \Gamma dt
    \\\notag
    & =  \mathbb{E} \int_{\Sigma} \sum_{j=1}^{n} \Bigl[  
        \sum_{i,l,r,m=1}^n \Theta_{1}^{ijlrm} \psi_{x_{i}x_{i}x_{l}}\psi_{x_{r}x_{m}}
    + \sum_{i,k,l,r=1}^n \Theta_{2}^{ijklr} \psi_{x_{i}x_{k}}\psi_{x_{l}x_{r}} \Bigr] \nu^{j}d \Gamma dt 
    \\\notag
    & \geq 
    - C \mathbb{E} \int_{\Sigma}  \sum_{i,k,p,r,q=1}^{n} s \lambda^{2} \xi | \psi_{x_{i}x_{k}x_{p}}\psi_{x_{r}x_{q}} | d \Gamma dt 
    -  C \mathbb{E} \int_{\Sigma}  s \lambda^{3} \xi  |\nabla^{2} \psi|^{2}  d \Gamma dt 
    \\\notag
    & \geq
    - C \mathbb{E} \int_{\Sigma}  \bigl[  s^{2} \lambda^{3} \xi^{2} |\nabla^{2} \psi|^{2} + \lambda |\nabla ^{3} \psi|^{2} \bigr] d \Gamma dt 
    - C \mathbb{E} \int_{\Sigma}  s \lambda^{3} \xi  |\nabla^{2} \psi|^{2}  d \Gamma dt
    \\\notag
    & \geq
    - C \mathbb{E} \int_{\Sigma}  \bigl[  s^{2} \lambda^{3} \xi^{2} |\nabla^{2} \psi|^{2} + \lambda |\nabla ^{3} \psi|^{2} \bigr] d \Gamma dt 
    \\
    & \geq
    \mathcal{B}_{1}+\mathcal{B}_{2}.
\end{align}
Hence, from \cref{eqCarlemanBoundary10,eqCarlemanBoundary9}, we obtain 
\begin{align}
    \label{eqCarlemanBoundary11}
    \mathbb{E}\int_{Q} \operatorname{div}  (V_{1} + V_{2} ) dxdt 
    \geq
    - C  \mathbb{E} \int_{0}^{T} e^{2s\alpha_{\star}} | \varphi |_{H^{4}(G)}^{2} dt 
    + \mathcal{B}_{1}+\mathcal{B}_{2}
\end{align}

{\bf Step 2}. In this step, we deal with the interior term $ \mathbb{E}\int_{Q} M dxdt $ and stochastic terms via integration by parts.

From \cref{thm.fundamentalIndentity}, we have 
\begin{align} \label{eq.paM1}
    \notag
     M   & =
    8 s \lambda^{2}\xi   | \nabla \Delta \psi \cdot \nabla \eta  | ^{2}
    + 32 s^{3} \lambda^{4} \xi^{3}  | \nabla^{2} \psi  \nabla \eta \nabla \eta   |^{2}
    + 48 s^{3} \lambda^{3} \xi^{3} \nabla^{2} \eta  (\nabla^{2} \psi \nabla \eta )  ( \nabla^{2} \psi \nabla \eta  )
    \\  \notag
    & \quad
    + 16 s^{3} \lambda^{3} \xi^{3}  ( \nabla^{2} \psi \nabla \eta \nabla \eta  ) \sum\limits_{i,j=1}^{n} \eta_{x_{i}x_{j}} \psi_{x_{i}x_{j}}
    -16 s^{3} \lambda ^{4} \xi ^{3} |\nabla \eta|^{2}  | \nabla^{2} \psi \nabla \eta  | ^{2}
    +6 s^{3} \lambda^{4} \xi ^{3} |\nabla \eta |^{4} | \nabla^{2} \psi|^{2}
    \\  \notag
    & \quad 
    + 4 s^{3} \lambda^{3} \xi ^{3}  ( \nabla^{2} \eta \nabla \eta \nabla \eta  ) |\nabla ^{2} \psi|^{2}
    + 2  s^{3} \lambda^{3} \xi ^{3} |\nabla \eta|^{2} \Delta \eta |\nabla^{2} \psi|^{2}
    + 2 s^{3} \lambda^{4} \xi ^{3} |\nabla \eta |^{4} | \Delta \psi|^{2}
    \\[2mm] \notag
    & \quad
    - 4 s^{3} \lambda^{3} \xi ^{3}  ( \nabla^{2} \eta \nabla \eta \nabla \eta  ) |\Delta \psi|^{2}
    - 2  s^{3} \lambda^{3} \xi ^{3} |\nabla \eta|^{2} \Delta \eta |\Delta \psi|^{2}
    + 40 s^{5} \lambda^{6} \xi^{5} |\nabla \eta|^{4} | \nabla \psi \cdot \nabla \eta|^{2}
    \\[2mm] \notag
    & \quad
    + 64 s^{5} \lambda ^{5} \xi^{5}  ( \nabla^{2} \eta \nabla \eta \nabla \eta  ) |\nabla \psi \cdot \nabla \eta|^{2}
    - 16 s^{5} \lambda^{6} \xi^{5} |\nabla \eta|^{6} |\nabla  \psi|^{2}
    + 8 s^{7} \lambda ^{8} \xi^{7} |\nabla \eta|^{8} \psi^{2}
    + \lambda |\nabla \Delta \psi|^{2}
    \\[2mm] \notag
    & \quad
    - s^{3} \lambda^{\frac{7}{2}} \xi^{3} |\nabla \eta|^{4}|\Delta \psi|^{2}
    + 4 s^{5} \lambda^{\frac{11}{2}} \xi^{5} |\nabla \eta|^{4}|\nabla \psi \cdot \nabla \eta|^{2}
    + 2 s^{5} \lambda^{\frac{11}{2}} \xi^{5} |\nabla \eta|^{6}|\nabla \psi|^{2}
    -  s^{7} \lambda^{\frac{15}{2}} \xi^{7} |\nabla \eta|^{8}\psi^{2}
    \\[2mm]
    & \quad
    - 4 s^{2} \lambda^{3} \xi^{2} |\nabla ^{2} \psi \nabla \eta|^{2}
    - 2 s^{2} \lambda^{3} \xi^{2} |\nabla \eta|^{2}|\Delta \psi|^{2}
    +  s^{4} \lambda^{5} \xi^{4} |\nabla \eta|^{4}|\nabla \psi |^{2}
    .
\end{align}
Thanks to  \cref{eq.pfthm2.1.S8}, it holds that 
\begin{align}\label{eq.MK0} 
    & \mathbb{E} \int_{Q} \big (  
        - 4 s^{2} \lambda^{3} \xi^{2} |\nabla ^{2} \psi \nabla \eta|^{2}
        - 2 s^{2} \lambda^{3} \xi^{2} |\nabla \eta|^{2}|\Delta \psi|^{2}
        +  s^{4} \lambda^{5} \xi^{4} |\nabla \eta|^{4}|\nabla \psi |^{2}
    \big ) d x d t 
    \geq
    -  \mathcal{A}_{2} - \mathcal{A}_{3}.
\end{align} 
Combining \cref{eqEstimatesOrder,eq.pfthm2.1.S8}, using integration by parts, for $ s\geq s_{2} $, we obtain
\begin{align}
    \label{eq.MK1}
    \notag
    & - \mathbb{E}\int_{Q}  16 s^{3} \lambda ^{4} \xi ^{3} |\nabla \eta|^{2}  | \nabla^{2} \psi \nabla \eta  | ^{2} dxdt
    \\ 
    &  
    \geq 
    -4 \mathbb{E}\int_{Q}  s \lambda^{2} \xi |\nabla \Delta \psi \cdot \nabla \eta|^{2} dxdt
    -16 \mathbb{E}\int_{Q}  s^{5} \lambda^{6} \xi^{5} |\nabla \eta|^{4} |\nabla  \psi \cdot \nabla \eta|^{2} dxdt 
    -  \mathcal{A}_{2} 
    .
\end{align}
Thanks to \cref{eq.eta}, there exists  $ \lambda_{1} \geq C   $, such that for all  $\lambda \geq \lambda_{1} $, it holds that
\begin{align}\label{eq.MK2} 
    \notag
    &  \mathbb{E}  \int_{Q}  64 s^{5} \lambda ^{5} \xi^{5}  ( \nabla^{2} \eta \nabla \eta \nabla \eta  ) |\nabla \psi \cdot \nabla \eta|^{2} d x d t
    \\ 
    &  
    \geq  
    - C  \mathbb{E}  \int_{(0, T) \times G_{3}}s^{5} \lambda ^{5} \xi^{5} |\nabla \psi|^{2} d x d t 
    -  \mathbb{E}  \int_{Q} s^{5} \lambda^{6} \xi^{5} |\nabla \eta|^{4} | \nabla \psi \cdot \nabla \eta|^{2} d x d t 
    .
\end{align}
Combining \cref{eqEstimatesOrder,eq.pfthm2.1.S8}, using integration by parts, for $ s\geq s_{2} $, we have 
\begin{align}\label{eq.s2-3-2}  
     \mathbb{E}  \int_{Q} \big[4 s^{3} \lambda^{3} \xi ^{3}  ( 
        \nabla^{2} \eta \nabla \eta \nabla \eta  ) 
    +2  s^{3} \lambda^{3} \xi ^{3} |\nabla \eta|^{2} \Delta \eta\big] ( |\nabla ^{2} \psi|^{2}-|\Delta \psi|^{2} ) d x d t
     \geq - \mathcal{A}_{2} 
    ,
\end{align}
and 
\begin{align}
    \label{eq.s2-3-2-1}
      6 \mathbb{E}  \int_{Q} s^{3} \lambda^{4} \xi^{3} | \nabla \eta |^{4} |\nabla^{2} \psi|^{2} d x d t 
      \geq 
    6 \mathbb{E}  \int_{Q} s^{3} \lambda^{4} \xi^{3} | \nabla \eta |^{4} | \Delta \psi|^{2} d x d t
    - \mathcal{A}_{2} .
\end{align}
Similarly, for $ s \geq s_{2} $, we deduce 
\begin{align} \label{eq.MK4}
    \notag
    & \mathbb{E}  \int_{Q} 
    16 s^{3} \lambda^{3} \xi^{3}  ( \nabla^{2} \psi \nabla \eta \nabla \eta  )\sum\limits_{i,j=1}^{n} \eta_{x_{i}x_{j}} \psi_{x_{i}x_{j}}    dxdt
    \\ 
    & 
    \geq  \mathbb{E} \int_{Q} 16  s^{3} \lambda^{3} \xi^{3} \nabla^{2} \eta  (\nabla^{2} \psi \nabla \eta )  ( \nabla^{2} \psi \nabla \eta  ) dxdt 
    - ( \mathcal{A}_{2}+\mathcal{A}_{3} )
    .
\end{align}
Combining \cref{eqEstimatesOrder,eq.pfthm2.1.S8}, there exists   $ \lambda_{2} \geq  C  $, such that for all  $  \lambda \geq \lambda_{2}$ and  $  s \geq s_{2} $, it holds that 
\begin{align}
    \label{eq.MK4p}
    \notag
    &  \mathbb{E}  \int_{Q} 64   s^{3} \lambda^{3} \xi^{3} \nabla^{2} \eta   (\nabla^{2} \psi \nabla \eta )   ( \nabla^{2} \psi \nabla \eta  ) dxdt
    \\
    &\geq - \mathbb{E}\int_{Q}  s \lambda^{2} \xi |\nabla \Delta \psi \cdot\nabla \eta|^{2} dxdt
    - \mathbb{E}\int_{Q}  s^{5} \lambda^{6} \xi^{5} |\nabla \eta|^{4} |\nabla  \psi \cdot\nabla \eta|^{2} dxdt
    -  \mathcal{A}_{2}  
    .
\end{align}

Next, we estimate the stochastic terms. Thanks to the divergence  theorem, we obtain 
\begin{align}
    \label{eqCarlemanBoundary12}
    - 2 \mathbb{E}\int_{Q} \sum_{i, j=1}^n (
   \psi_{x_{i}x_{i}x_{j}} d \psi - \psi_{x_{i}x_{j}}d \psi_{x_{i}}
   +\Psi_{2} \psi_{x_{i}} \delta_{ij} d \psi
   +\Psi_{3}^{ij} \psi_{x_{i}} d \psi
   )_{x_{j}} d x
   = 0
\end{align}
From \cref{eq.carlemanForwardEquation}, for $ s \geq s_{2} $, it holds that 
\begin{align}
    \label{eqCarlemanBoundary13}\notag
    & 
   - \mathbb{E}\int_{Q} \theta^{2} \sum_{i, j=1}^n\big[  d \varphi_{x_{i}x_{j}} + 2 \ell_{x_{j}} d \varphi_{x_{i}} +   ( \ell_{x_{i}x_{j}} + \ell_{x_{i}} \ell_{x_{j}} ) d \varphi \big]^{2} d x
   -  \mathbb{E}\int_{Q}  \Psi_{6} \theta^{2} ( d \varphi )^{2} d x  
    \\ \notag
    & \quad  
    + \mathbb{E}\int_{Q} \theta^{2}  \Psi_{2} | d \nabla \varphi
    + \nabla \ell d \varphi|^{2} d x
    + 4 \mathbb{E}\int_{Q}  s^{2}\lambda^{2}\xi^{2} \theta^{2} | \nabla \eta d \nabla \varphi
    + \nabla \eta \nabla \ell d \varphi|^{2} d x
    \\ 
    & \quad  
    \geq - C  \Big (s^{4}  \mathbb{E} \int_{Q}\lambda^{4} e^{2 s\alpha}  \xi^{4}g^{2} d x d t + s^{2}  \mathbb{E} \int_{Q}\lambda^{2} e^{2 s\alpha}  \xi^{2}|\nabla g|^{2} d x d t 
    + \mathbb{E} \int_{Q} e^{2 s\alpha}  |\nabla^{2} g|^{2} d x d t \Big )
    .
\end{align}

Combining \cref{eq.thm31S1,eqCarlemanBoundary11,eq.paM1,eq.MK0,eq.MK1,eq.MK2,eq.s2-3-2,eq.s2-3-2-1,eq.MK4,eq.MK4p,eqCarlemanBoundary12,eqCarlemanBoundary13}, there exist $  \lambda_{3} \geq \max\{\lambda_{1}, \lambda_{2}\} $ and  $ s_{3} \geq \max \{s_{1},s_{2}\} $, such that for all  $ \lambda \geq \lambda_{3} $ and $ s \geq s_{3} $, we obtain 
\begin{align}
    \label{eq.thm31S6-Middle2}
    \notag
    & 
    2 \mathbb{E}\int_{Q} \theta K_{1} (  d \varphi + \Delta^{2} \varphi dt ) d x
    +C   \mathbb{E} \int_{Q} \theta  
    ( 
        f^{2} 
        + s^{4} \lambda^{4} \xi^{4} g ^{2}
        + s^{2} \lambda^{2} \xi^{2}|\nabla g|  ^{2}
        +   |\nabla^{2} g| ^{2}
    )d x d t
   \\\notag
   & 
   + C  \mathbb{E}  \int_{(0, T) \times G_{3}}s^{5} \lambda ^{5} \xi^{5} |\nabla \psi|^{2} d x d t 
   + C  \mathbb{E} \int_{0}^{T} e^{2s\alpha_{\star}} | \varphi |_{H^{4}(G)}^{2} dt  
   + \mathcal{A} + \mathcal{B}
   \\ \notag
    & \geq
   2 \mathbb{E}\int_{Q} K_{1}^{2} dxdt
   + 2 \mathbb{E}\int_{Q} K_{1} K_{3} d x
   \\  \notag
    & \quad 
    + ~ 2 \mathbb{E}\int_{Q} (  
            3 s \lambda^{2}\xi   | \nabla \Delta \psi \cdot \nabla \eta  | ^{2} 
            + 32 s^{3} \lambda^{4} \xi^{3}  | \nabla^{2} \psi  \nabla \eta \nabla \eta   |^{2} 
            + 8 s^{3} \lambda^{4} \xi ^{3} |\nabla \eta |^{4} | \Delta \psi|^{2} 
        \\  \notag
        &  \quad \quad \quad \quad \quad 
            + 22 s^{5} \lambda^{6} \xi^{5} |\nabla \eta|^{4} | \nabla \psi \cdot \nabla \eta|^{2} 
            - 16 s^{5} \lambda^{6} \xi^{5} |\nabla \eta|^{6} |\nabla  \psi|^{2}
            + 8 s^{7} \lambda ^{8} \xi^{7} |\nabla \eta|^{8} \psi^{2}
            + \lambda |\nabla \Delta \psi|^{2}
        \\   
        &\quad \quad \quad \quad \quad 
        + 2 s^{5} \lambda^{\frac{11}{2}} \xi^{5} |\nabla \eta|^{6}|\nabla \psi|^{2}
            -  s^{7} \lambda^{\frac{15}{2}} \xi^{7} |\nabla \eta|^{8}\psi^{2}
            - s^{3} \lambda^{\frac{7}{2}} \xi^{3} |\nabla \eta|^{4}|\Delta \psi|^{2}
     ) dxdt.    
\end{align}

{\bf Step 3}.
In this step, we get the estimate of $ \varphi $.

Using integration by parts, we have 
\begin{align}\label{eq.s2-5-2} \notag
    &\mathbb{E} \int_{Q} \bigl[
        \big( 8 s^{3} \lambda^{4} \xi^{3}  - s^{3} \lambda^{\frac{7}{2}} \xi^{3}\big) |\nabla \eta|^{4} |\Delta \psi|^{2}
        - 2 \big( 8 s^{3} \lambda^{4} \xi^{3}  - s^{3} \lambda^{\frac{7}{2}} \xi^{3}\big) |\nabla \eta|^{6} s^{2} \lambda^{2} \xi^{2} |\nabla \psi|^{2}
        \\\notag
        & \qquad ~
        + \big( 8 s^{3} \lambda^{4} \xi^{3}  - s^{3} \lambda^{\frac{7}{2}} \xi^{3}\big) |\nabla \eta|^{8} s^{4} \lambda^{4} \xi^{4} |\psi|^{2}
    \bigr] dxdt
    \\
    & \geq \mathbb{E} \int_{Q}
        \big( 8 s^{3} \lambda^{4} \xi^{3}  - s^{3} \lambda^{\frac{7}{2}} \xi^{3}\big) |\nabla \eta|^{4} \big(\Delta \psi +s^{2} \lambda^{2} \xi^{2} |\nabla \eta|^{2} \psi \big)^{2}
    dxdt
    - \mathcal{A}_{1}
    ,
\end{align}
and 
\begin{align}\notag
    \label{eq.thm31S5E2}
    &\mathbb{E}  \int_{Q}  s^{3} \lambda^{4} \xi ^{3} \theta^{2} |\nabla \eta |^{4} | \Delta \varphi|^{2}  dxdt
    \\\notag
    &= \mathbb{E}  \int_{Q}  s^{3} \lambda^{4} \xi^{3} |\nabla \eta|^{4} \big(
        \Delta \psi
        -2 s \lambda \xi \nabla \eta \cdot \nabla \psi
        + s^{2} \lambda^{2} \xi^{2} |\nabla \eta|^{2} \psi
        - s \lambda^{2} \xi |\nabla \eta|^{2} \psi
        - s \lambda \xi |\Delta \eta| \psi
    \big)^{2}dxdt
    \\  
    & \leq
    3 \mathbb{E}  \int_{Q}   s^{3} \lambda^{4} \xi^{3} |\nabla \eta|^{4} \big(
        \Delta \psi
        \!+\! s^{2} \lambda^{2} \xi^{2} |\nabla \eta|^{2} \psi
    \big)^{2}dxdt
    \!+\! 12 \mathbb{E}  \int_{Q} s^{5} \lambda^{6} \xi^{5} |\nabla \eta|^{4} |\nabla \psi\cdot \nabla \eta|^{2}
    + \mathcal{A}_{1}
    .
\end{align}
We also obtain 
\begin{align}
    \label{eq.thm31S5E2-M1}
    \notag
    & \mathbb{E} \int_{Q} \lambda |\nabla \Delta \psi |^{2} d x d t 
    + C  \mathbb{E} \int_{Q} (
        s^{2} \lambda^{3} \xi^{2} \theta^{2} |\nabla^{2} \varphi|^{2}
        + s^{4} \lambda^{5} \xi^{4} \theta^{2} |\nabla \varphi|^{2}
        + s^{6} \lambda^{7} \xi^{6} \theta^{2} | \varphi|^{2}
    ) d x d t 
    \\  
    & \geq
    \mathbb{E} \int_{Q} \lambda \theta^{2} |\nabla \Delta \varphi |^{2} d x d t 
    .
\end{align}
Using \cref{eq.pfthm2.1.S8}, there exists  $ s_{4} \geq C (T^{1/2} + T) $, such that for $ s \geq s_{4} $, it holds that 
\begin{align}
    \label{eq.thm31S5E3}
    2 \mathbb{E} \int_{Q} K_{1} K_{3}dx & \geq -  \mathbb{E} \int_{Q} K_{1}^{2} dx dt - \mathbb{E} \int_{Q} K_{3}^{2} dxdt 
    \geq  -  \mathbb{E} \int_{Q} K_{1}^{2} dx dt  - \mathcal{A}
    .
\end{align}
Thanks to\cref{eq.eta}, we obtain 
\begin{align}
    \label{eq.thm31S5E6}
    \mathbb{E} \int_{(0, T) \times G_{3}} s^{5} \lambda ^{5} \xi^{5} |\nabla \psi|^{2} dxdt 
    \leq C  \mathbb{E}  \int_{(0, T) \times G_{3}}   (s^{7} \lambda^{7} \xi^{7}\theta^{2}| \varphi|^{2}+s^{5} \lambda^{5} \xi^{5}\theta^{2}|\nabla \varphi|^{2}  )dx dt.
\end{align}
and
\begin{align}
    \label{eq.thm31S5E7}
    2 \mathbb{E}\int_{Q} \theta K_{1} (   d \varphi + \Delta^{2} \varphi dt ) dx
    \leq \mathbb{E} \int_{Q} K_{1}^{2} dxdt + \mathbb{E} \int_{Q} \theta^{2} f^{2} dx dt
    .
\end{align}
Using \cref{lemma.LaplaceCarleman}, there exist $ \lambda_{4} \geq \hat{\lambda} $ and $ s_{4} \geq \hat{s} $, such that  for all $ \lambda \geq \lambda_{4} $ and $ s \geq s_{4} $, we obtain 
\begin{align}
    \label{eq.thm31S5E8_1}
    \notag
    &\mathbb{E} \int_{Q} s^{6}  \lambda^{8} \xi^{6} \theta^{2} |\varphi|^{2} dx dt
    +\mathbb{E} \int_{Q} s^{4}  \lambda^{6} \xi^{4} \theta^{2} | \nabla \varphi|^{2} dx dt
    \\
    &\leq
    C \Big(
    \mathbb{E} \int_{(0,T)\times G_{3}} s^{6}  \lambda^{8} \xi^{6} \theta^{2} |\varphi|^{2} dx dt
    + \mathbb{E} \int_{Q} s^{3}  \lambda^{4} \xi^{3} \theta^{2} |\Delta \varphi|^{2} dx dt
    \Big)
    .
\end{align}
By means of \cref{lemma.auxiliaryEstimates,eqEstimatesOrder}, we can absorb $ \mathcal{B}  $ and $ |e^{s \alpha_{*}} \varphi|_{L_{\mathbb{F}}^{2}(0, T ; H^{4}(G))}^{2} $.

Next, we estimate $ \mathcal{A}_{3} $.  
Letting  $ \tilde{\varphi} = s \lambda^{2} \xi e^{s \alpha} \varphi $, we obtain 
\begin{equation}\label{eq.thm31S5E11}
    \mathbb{E} \int_{Q} | \Delta \tilde{\varphi} |^{2} dxdt 
    \leq C \mathbb{E} \int_{Q}  (s^{6} \lambda^{8}  \xi^{6} \theta^{2}| \varphi|^{2}+s^{4} \lambda^{6}  \xi^{4} \theta^{2}|\nabla \varphi|^{2} + s^{2} \lambda^{4}  \xi^{2} \theta^{2}| \Delta \varphi|^{2} )dxdt.
\end{equation}
Note that 
\begin{equation}\label{eq.thm31S5E12}
        |\tilde{\varphi}|^{2}_{L^{2}_{\mathbb{F}}( 0,T; H^{2}(G) )} \leq C  |\Delta \tilde{\varphi}|^{2}_{L^{2}_{\mathbb{F}}( 0,T; L^{2}(G) )},
\end{equation}
and 
\begin{align}
    \label{eq.pfMpara3}
    & \mathbb{E} \int_{Q}  s^{2} \lambda^{4}  \xi^{2} \theta^{2}| \nabla^{2} \varphi|^{2}  dxdt 
    \leq 
    C |\tilde{\varphi}|^{2}_{L^{2}_{\mathbb{F}}( 0,T; H^{2}(G) )} 
    + C \mathbb{E} \int_{Q}  (
        s^{6} \lambda^{8}  \xi^{6} \theta^{2}| \varphi|^{2}
    +   s^{4} \lambda^{6}  \xi^{4} \theta^{2}|\nabla \varphi|^{2}  )dxdt.
\end{align}
Combining  \cref{eq.thm31S6-Middle2,eq.s2-5-2,eq.thm31S5E2-M1,eq.thm31S5E3,eq.thm31S5E6,eq.thm31S5E7,eq.thm31S5E8_1,eq.thm31S5E11,eq.thm31S5E12,eq.pfMpara3}, there exist $ \lambda_{5} \geq \max \{ \lambda_{3}, \lambda_{4} \}$  and $ s_{5} \geq \max \{ s_{3}, s_{4} \} $, such that for all $ \lambda \geq \lambda_{5} $ and $ s \geq s_{5} $, it holds that 
\begin{align}
    \label{eq.thm31S5E13}
    \notag
    & C  \mathbb{E} \int_{Q} \theta^{2} f^{2} dx dt    
    + C  \mathbb{E} \int_{Q} \theta^{2}
    (
      s^{5} \lambda^{5} \xi^{5} g ^{2}
      + s^{3} \lambda^{3} \xi^{3}|\nabla g|  ^{2}
      + s \lambda \xi |\nabla^{2} g| ^{2}
    )d x d t
    \\\notag
    & 
    + C \Big[ \mathbb{E} \int_{(0, T) \times G_{3}} \big(s^{7} \lambda^{7} \xi^{7}\theta^{2}| \varphi|^{2}+s^{5} \lambda^{5} \xi^{5}\theta^{2}|\nabla \varphi|^{2} +s^{3} \lambda^{4} \xi^{3}\theta^{2}|\Delta \varphi|^{2}\big)d x d t\Big]   
    \\\notag
    & 
    + C  \mathbb{E} \int_{(0,T)\times G_{2}} s^{6}  \lambda^{8} \xi^{6} \theta^{2} |\varphi|^{2} dx dt  
    \\ 
    & \geq
    \mathbb{E} \int_{Q} \big (s^{6} \lambda^{8}  \xi^{6} \theta^{2}| \varphi|^{2}+s^{4} \lambda^{6}  \xi^{4} \theta^{2}|\nabla \varphi|^{2} +s^{3} \lambda^{4}  \xi^{3} \theta^{2}|\Delta \varphi|^{2}
    + s^{2} \lambda^{4}  \xi^{2} \theta^{2}| \nabla^{2} \varphi|^{2} 
    + \lambda \theta^{2} |\nabla \Delta \varphi |^{2}  
    \big ) d x d t.    
\end{align}

Now, we estimate the local terms on $ \nabla \varphi $ and $ \Delta \varphi $. Let us introduce a cut-off function $ \rho $ such that
\begin{equation}\label{eq.pfthm2.1.S31}
    \rho \in C_{0}^{\infty}(G_{2}), \quad \rho=1 \text { in } G_{3},  \quad 0 \leq \rho \leq 1.
\end{equation}
Using It\^o's formula and \cref{eq.carlemanForwardEquation}, we get
\begin{align*}
    d ( \rho^{4} \xi^{3} \theta^{2}\varphi^{2}  )
        = \big( 2s\rho^{4} \xi^{3} \alpha_{t}+3\rho^{4}\xi^{2}\xi_{t}  \big)\theta^{2}\varphi^{2} dt+  \rho^{4} \xi^{3}  \theta^{2}g^{2}dt
            +2\rho^{4} \xi^{3}  \theta^{2} \varphi d \varphi.
\end{align*}
Integrating it on $ Q_{2} $, taking mathematical expectation on the both sides, we arrive
\begin{align}
    \notag
    0=&\mathbb{E} \int_{Q_{2}}  \big( 2s\rho^{4} \xi^{3} \alpha_{t}+3\rho^{4}\xi^{2}\xi_{t}\big)\theta^{2}\varphi^{2} dxdt
    +\mathbb{E} \int_{Q_{2}} \rho^{4} \xi^{3} \theta^{2}g^{2}dxdt
    - \mathbb{E} \int_{Q_{2}}  2\rho^{4} \xi^{3} \theta^{2} \varphi\Delta^{2} \varphi dxdt
    \\ \label{eq.pfthm2.1.S33}
    &
    + \mathbb{E} \int_{Q_{2}}  2\rho^{4} \xi^{3} \theta^{2} \varphi f dxdt.
\end{align}
From \cref{eq.pfthm2.1.S31} and integration by parts, there exist $ \lambda_{6} \geq C $ and $ s_{6} \geq C T $, such that for all $ \lambda \geq \lambda_{6} $ and $ s \geq s_{6} $, it follows that
\begin{align}
    \label{eq.pfthm2.1.S34}
\notag &\mathbb{E} \int_{Q_{2}}
s^{3}\lambda^{4}\xi^{3} \rho^{4} \theta^{2}
|\Delta \varphi|^{2} dx dt\\
& \leq \mathbb{E} \int_{Q_{2}}  s^{3}
\lambda^{4} \xi^{3} \rho^{4}\theta^{2}
\varphi\Delta^{2} \varphi dxdt + C \mathbb{E} \int_{Q_{2}}
s^{5}\lambda^{6}\xi^{5} \rho^{4} \theta^{2}
|\nabla \varphi|^{2} dxdt
\\ 
\notag &\quad + C \mathbb{E} \int_{Q_{2}}
s^{7}\lambda^{8} \xi^{7} \theta^{2} \varphi^{2} dxdt
    + C \mathbb{E} \int_{Q}  s^{3}\lambda^{4}\xi^{3}  \theta^{2} ( |\varphi|^{2} + |\nabla \varphi|^{2} ) dxdt.
\end{align}
For $ s\geq s_{6} $, we also have
\begin{equation}\label{eq.pfthm2.1.S35}
        \Big| \mathbb{E} \int_{Q_{2}}  2 s^{3}\lambda^{4}\xi^{3} \rho^{4}  \theta^{2} \varphi f dxdt  \Big|
    \leq
    C \mathbb{E} \int_{Q_{2}} s^{6}\lambda^{8} \xi^{6} \theta^{2} \varphi^{2} dxdt
    +C\mathbb{E} \int_{Q}  \theta^{2} f^{2} dxdt
\end{equation}
and
\begin{equation}\label{eq.pfthm2.1.S36}
        \Big| \mathbb{E} \int_{Q_{2}}  s^{3}\lambda^{4}  ( 2s\rho^{4} \xi^{3} \alpha_{t}+3\rho^{4}\xi^{2}\xi_{t}  )\theta^{2}\varphi^{2} dxdt  \Big|
    \leq
    C   \mathbb{E} \int_{Q_{2}} s^{7}\lambda^{8} \xi^{7} \theta^{2} \varphi^{2} dxdt.
\end{equation}

Using integration by parts, for $ \varepsilon > 0 $,  $ \lambda \geq \lambda_{6} $ and $ s \geq s_{6} $, we obtain
\begin{align}
    \label{eq.pfthm2.1.S38}
    \mathbb{E} \int_{Q_{2}}  s^{5}\lambda^{6}\xi^{5} \rho^{4} \theta^{2} |\nabla \varphi|^{2} dxdt
    \leq \varepsilon \mathbb{E} \int_{Q_{2}}  s^{3}\lambda^{4}\xi^{3} \psi^{4} \theta^{2} |\Delta \varphi|^{2} dx dt +    C\varepsilon^{-1}\mathbb{E} \int_{Q_{2}} s^{7}\lambda^{8} \xi^{7} \theta^{2} \varphi^{2} dxdt.
\end{align}

From \cref{eq.pfthm2.1.S31,eq.pfthm2.1.S33,eq.pfthm2.1.S34,eq.pfthm2.1.S35,eq.pfthm2.1.S36,eq.pfthm2.1.S38}, letting $ \varepsilon>0 $ small enough,  for $ \lambda \geq  \lambda_{6} $ and $ s\geq s_{6} $, it follows that
\begin{align}
    \label{eq.pfthm2.1.S39}
    \notag
    &\mathbb{E}  \int_{(0,T)\times G_{3}}   (s^{5} \lambda^{5} \xi^{5}\theta^{2}|\nabla \varphi|^{2} +s^{3} \lambda^{4} \xi^{3}\theta^{2}|\Delta \varphi|^{2} )dxdt
    \\
    & \leq
    C \Big[\mathbb{E} \int_{Q}  \theta^{2} (f^{2} + s^{4} \lambda^{4} \xi^{4} g ^{2} ) dxdt
    + \mathbb{E} \int_{Q_{2}} s^{7}\lambda^{8} \xi^{7} \theta^{2} \varphi^{2} dxdt
    +  \mathbb{E} \int_{Q}  s^{3}\lambda^{4}\xi^{3}  \theta^{2} ( |\varphi|^{2} + |\nabla \varphi|^{2} ) dxdt
    \Big]
\end{align}

From \cref{eq.thm31S5E13}  and \cref{eq.pfthm2.1.S39}, for $ \lambda \geq  \max \{ \lambda_{5}, \lambda_{6} \} \geq C $ and $ s \geq \max \{ s_{5}, s_{6} \} \geq C (T + T^{1/2})$, we deduce \cref{eq.carlemanEstimate}.
\end{proof}

\section{Proof of the observability estimate} \label{sc.4}

In this section, we use the Carleman estimate \cref{thm.carlemanForwardPre} to prove \cref{thmObservabilityEstimateI}.

From \cref{thm.carlemanForwardPre}, we obtain the following proposition.
\begin{proposition} \label{prop1}
    Let $ (z_{1}, z_{2}) $ be a solution to the system \cref{eqEquationsAdj}. Then for all $ \lambda \geq \lambda_{0} $ and $ s \geq s_{0} $, it holds that 
    \begin{align*}
        \mathbb{E} \int_{Q} s^{6} \lambda^{8}  \xi^{6} e^{2 s \alpha} (z_{1}^{2} + z_{2}^{2}) d x d t
        \leq 
        C \mathbb{E} \int_{Q_{2}} s^{7} \lambda^{8} \xi^{7}  e^{2 s \alpha} (z_{1}^{2} + z_{2}^{2}) d x d t
        .
    \end{align*}
\end{proposition}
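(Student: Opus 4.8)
The plan is to apply the Carleman estimate \cref{thm.carlemanForwardPre} to each component of the adjoint system \cref{eqEquationsAdj} separately, regarding each of the two equations as a forward fourth order stochastic parabolic equation $d\varphi+\Delta^2\varphi\,dt=f\,dt+g\,dW(t)$, and then to add the two resulting inequalities and absorb all lower order terms into the left-hand sides. For the first equation I take $\varphi=z_1$, $f=-(a_1z_1+b_1z_2)$ and $g=-a_3z_1$; for the second, $\varphi=z_2$, $f=-(a_2z_1+b_2z_2)$ and $g=-b_3z_2$. Since $(z_1,z_2)\in[L^2_{\mathbb{F}}(0,T;H^2(G)\cap H_0^1(G))]^2$, $a_i,b_i\in L^\infty_{\mathbb{F}}(0,T;L^\infty(G))$ and $a_3,b_3\in L^\infty_{\mathbb{F}}(0,T;W^{2,\infty}(G))$, these data satisfy the hypotheses of \cref{thm.carlemanForwardPre}, and the boundary conditions $z_i=\partial z_i/\partial\nu=0$ on $\Sigma$ are exactly those required in \cref{eq.carlemanForwardEquation}.

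Applying \cref{eq.carlemanEstimate} to $z_1$ and to $z_2$ and summing, the observation region on the right-hand side is $Q_2$ in both cases, so the two local terms combine into $C\,\mathbb{E}\int_{Q_2}s^7\lambda^8\xi^7e^{2s\alpha}(z_1^2+z_2^2)\,dxdt$, which is precisely the right-hand side in the statement. It then remains to dominate the non-local source terms. Using the boundedness of $a_i,b_i$ we get $|f|^2\le C(z_1^2+z_2^2)$, which — since $\xi\ge 2/T$ by \cref{eqEstimatesOrder} — is controlled by $s^6\lambda^8\xi^6e^{2s\alpha}(z_1^2+z_2^2)$ for $s,\lambda$ large. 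For the terms coming from $g=-a_3z_1$ (and analogously $g=-b_3z_2$), the $W^{2,\infty}$ regularity of $a_3$ gives $|g|^2\le Cz_1^2$, $|\nabla g|^2\le C(z_1^2+|\nabla z_1|^2)$ and $|\nabla^2 g|^2\le C(z_1^2+|\nabla z_1|^2+|\nabla^2 z_1|^2)$; hence $s^5\lambda^5\xi^5|g|^2$, $s^3\lambda^3\xi^3|\nabla g|^2$ and $s\lambda\xi|\nabla^2 g|^2$ are absorbed, respectively, by the terms $s^6\lambda^8\xi^6z_1^2$, $s^4\lambda^6\xi^4|\nabla z_1|^2$ and $s^2\lambda^4\xi^2|\nabla^2 z_1|^2$ on the left-hand side of \cref{eq.carlemanEstimate}, once $\lambda$ (and then $s$) is enlarged. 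After these absorptions the left-hand side still bounds $\mathbb{E}\int_Q s^6\lambda^8\xi^6e^{2s\alpha}(z_1^2+z_2^2)\,dxdt$ from below, which is the desired inequality.

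The argument is routine given \cref{thm.carlemanForwardPre}; the only delicate point is the bookkeeping of the powers of $s$, $\lambda$ and $\xi$, and in particular the observation that the gradient-type terms on the left of \cref{eq.carlemanEstimate} carry strictly higher powers of $\lambda$ (namely $\lambda^6$ and $\lambda^4$) than the corresponding source terms built from $\nabla g$ and $\nabla^2 g$ ($\lambda^3$ and $\lambda$), so that in every case the absorption ratio behaves like $Cs\lambda^3\xi\to\infty$. Note that \cref{as1} plays no role at this stage; it will be used only afterwards, to pass from the local term in $z_1$ to one involving $z_2$ alone.
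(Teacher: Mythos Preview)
Your proposal is correct and follows exactly the approach the paper intends: the paper simply states that \cref{prop1} follows from \cref{thm.carlemanForwardPre} without spelling out the details, and your argument---applying \cref{eq.carlemanEstimate} to each of the two equations in \cref{eqEquationsAdj}, summing, and absorbing the source terms using the higher powers of $\lambda$ on the left---is precisely the routine computation that justifies this claim. The bookkeeping you record (in particular that every absorption ratio is at least of order $s\lambda^{3}\xi$, hence can be made large by enlarging $\lambda$ and $s$) is accurate.
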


\begin{lemma} \label{lmS}
    For any constants $ c_{0} > 0 $  and  $ p, q, c_{1} \geq 0 $, there exists a constant    $ \tilde{s}_{0} > 0  $ such that for all $ s \geq \tilde{s}_{0} $, we have 
    \begin{align}\label{eqcoreLemma}
        |c_{1} s^{p} \xi^{q} e^{c_{0} s \alpha} | \leq 1
        .
    \end{align}
\end{lemma}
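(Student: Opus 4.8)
The plan is to reduce \cref{eqcoreLemma} to the elementary fact that, for fixed exponents, a power of $s$ times a decaying exponential in $s$ tends to $0$. Two properties of the weights \cref{eq.alpha} make this work. First, since $\eta>0$ in $G$ by \cref{eq.eta} we have $|\eta|_{C(\overline{G})}>0$, and $|\eta|_{C(\overline{G})}\ge\eta(x)$ for all $x$, so $4|\eta|_{C(\overline{G})}\ge 3|\eta|_{C(\overline{G})}+\eta(x)$; hence, writing $\beta(t)=t^{1/2}(T-t)^{1/2}>0$,
\begin{equation*}
    -\alpha(x,t)=\frac{e^{4\lambda|\eta|_{C(\overline{G})}}-e^{\lambda(2|\eta|_{C(\overline{G})}+\eta(x))}}{\beta(t)}
    \ge\big(e^{\lambda|\eta|_{C(\overline{G})}}-1\big)\frac{e^{\lambda(2|\eta|_{C(\overline{G})}+\eta(x))}}{\beta(t)}=c_2\,\xi(x,t),
\end{equation*}
where $c_2:=e^{\lambda|\eta|_{C(\overline{G})}}-1>0$. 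Second, by \cref{eqEstimatesOrder} one has $\xi\ge 2/T=:\xi_{\min}>0$ on $Q$. Here $\lambda$ is treated as fixed, and $c_2,\xi_{\min},\tilde s_0$ are allowed to depend on it.

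Given these, I would first observe that the left-hand side of \cref{eqcoreLemma} is nonnegative, so the modulus is redundant, and then estimate, using $\alpha\le-c_2\xi$,
\begin{equation*}
    c_1 s^p\xi^q e^{c_0 s\alpha}\le c_1 s^p\xi^q e^{-c_0 c_2 s\xi}
    =c_1 s^p\Big(\xi^q e^{-\tfrac{1}{2} c_0 c_2 s\xi}\Big)e^{-\tfrac{1}{2} c_0 c_2 s\xi}.
\end{equation*}
The elementary maximization $\max_{y>0}y^q e^{-ay}=(q/(ea))^q$ for $a>0$ (understood as $1$ when $q=0$), with $a=\tfrac{1}{2} c_0 c_2 s$, gives $\xi^q e^{-\frac{1}{2} c_0 c_2 s\xi}\le C_3\,s^{-q}$ for a constant $C_3=C_3(q,c_0,c_2)$, while $e^{-\frac{1}{2} c_0 c_2 s\xi}\le e^{-\frac{1}{2} c_0 c_2 \xi_{\min} s}$ by the lower bound on $\xi$. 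Therefore
\begin{equation*}
    c_1 s^p\xi^q e^{c_0 s\alpha}\le c_1 C_3\,s^{p-q}\,e^{-\frac{1}{2} c_0 c_2 \xi_{\min}\,s},
\end{equation*}
and since exponential decay beats any power of $s$, the right-hand side tends to $0$ as $s\to\infty$. Choosing $\tilde s_0>0$ so that it is $\le 1$ for all $s\ge\tilde s_0$ proves \cref{eqcoreLemma}.

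I do not expect any genuine difficulty: the statement is essentially bookkeeping about the weight functions. The only points that warrant a moment's care are verifying the exponent inequality that makes $c_2$ \emph{strictly} positive (this is exactly where $\eta>0$ in $G$ enters) and splitting $e^{-c_0 c_2 s\xi}$ into two halves, one to absorb the possibly unbounded factor $\xi^q$ and one to retain decay in $s$.
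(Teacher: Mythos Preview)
Your proof is correct and follows essentially the same approach as the paper: both exploit that $-\alpha$ dominates a positive multiple of $\xi$ (equivalently, of $\gamma(t)=1/\beta(t)$) and that $\xi$ is bounded below by $2/T$, so the exponential decay in $s$ beats any polynomial factor. The only cosmetic difference is that the paper takes logarithms and bounds $\ln\gamma/\gamma$ to handle the unbounded factor $\gamma^q$, whereas you split $e^{-c_0c_2 s\xi}$ in half and use the explicit maximum $\max_{y>0}y^qe^{-ay}=(q/(ea))^q$; your route is slightly cleaner since it avoids the transcendental inequality $s\ge A+B\ln s$ and lands directly on an exponentially decaying bound.
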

\begin{proof}
    Letting $ \gamma(t) = \frac{1}{t^{1/2}(T-t)^{1/2}} $, for $ t \in (0, T) $, it clearly holds that
    \begin{align} \label{eqGamma}
        \frac{2}{T} \leq \gamma(t)  < \infty .
    \end{align}
    By means of  \cref{eq.alpha,eq.eta}, the inequality
    \begin{align*}
        c_{1} s^{p} \xi^{q} e^{c_{0} s \alpha}  \leq c_{1} s^{p} c_{2}(\lambda) \gamma^{q} e^{- c_{0} c_{3}(\lambda) s \gamma} \leq 1
    \end{align*}
    holds if  
    \begin{align*}
        s \geq 
        \frac{\ln (c_{1} c_{2}(\lambda))}{ c_{0} c_{3}(\lambda) \gamma}
        + \frac{q \ln \gamma}{ c_{0} c_{3}(\lambda) \gamma}
        + \frac{p \ln s}{ c_{0} c_{3}(\lambda) \gamma}
        .
    \end{align*}
    Thanks to \cref{eqGamma} and $ \frac{\ln \gamma}{\gamma} \leq c_{4} < \infty $, it is sufficient that 
    \begin{align*}
        s \geq 
        \frac{ T \ln (c_{1} c_{2}(\lambda))}{ 2 c_{0} c_{3}(\lambda) }
        + \frac{q  c_{4} }{ c_{0} c_{3}(\lambda)  }
        + \frac{T p \ln s}{2  c_{0} c_{3}(\lambda)  }
        .
    \end{align*}
    Hence, there exists $ \tilde{s}_{0}=\tilde{s}_{0}(p, q, c_{1}, c_{0}, \lambda, T) $ such that \cref{eqcoreLemma} holds.
\end{proof}

\begin{proposition} \label{propTwotoOne}
    Fix $ \lambda = \lambda_{0} $. Let $ (z_{1}, z_{2}) $ be a solution to the system \cref{eqEquationsAdj}. There exists constants $ C, \tilde{s}_{1}  > 0 $ such that  for all $ s \geq \tilde{s}_{1} $, it holds that 
    \begin{align*}
        \mathbb{E} \int_{Q}   \xi^{6} e^{2 s \alpha} (z_{1}^{2} + z_{2}^{2}) d x d t
        \leq 
        C \mathbb{E} \int_{Q_{0}}   e^{ \frac{3}{2} s  \alpha}   z_{2}^{2}  d x d t
        .
    \end{align*}
\end{proposition}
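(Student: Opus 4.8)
The plan is to reduce the statement to \cref{prop1}, the only new ingredient being the use of \cref{as1} to trade the local observation of $z_1$ for one of $z_2$ over $Q_0$. Fix $\lambda=\lambda_0$. Applying the Carleman estimate \cref{thm.carlemanForwardPre} to $\varphi=z_1$ (with $f=-(a_1z_1+b_1z_2)$, $g=-a_3z_1$) and to $\varphi=z_2$ (with $f=-(a_2z_1+b_2z_2)$, $g=-b_3z_2$), then absorbing the coupling terms and the $g$-terms on the right into the left for $s$ large — exactly as in the derivation of \cref{prop1}, but \emph{keeping} the gradient terms on the left — gives the strengthened estimate
\begin{equation}\label{eq.pp.star}
\mathbb{E}\int_Q e^{2s\alpha}\sum_{i=1}^2\big(s^6\lambda_0^8\xi^6 z_i^2+s^4\lambda_0^6\xi^4|\nabla z_i|^2+s^3\lambda_0^4\xi^3|\Delta z_i|^2+s^2\lambda_0^4\xi^2|\nabla^2 z_i|^2\big)\,dx\,dt\le C\,\mathbb{E}\int_{Q_2}s^7\lambda_0^8\xi^7 e^{2s\alpha}(z_1^2+z_2^2)\,dx\,dt.
\end{equation}
Since $\xi\ge 2/T$ and $\alpha<0$, the left-hand side of \cref{propTwotoOne} is dominated by the left-hand side of \cref{eq.pp.star}, so it suffices to estimate the right-hand side of \cref{eq.pp.star}. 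The $z_2$-contribution is immediate: $Q_2\subset Q_0$, and by \cref{lmS} (with $c_0=\tfrac12$) one has $s^7\lambda_0^8\xi^7 e^{\frac12 s\alpha}\le1$ for $s$ large, so $\mathbb{E}\int_{Q_2}s^7\lambda_0^8\xi^7 e^{2s\alpha}z_2^2\le\mathbb{E}\int_{Q_0}e^{\frac32 s\alpha}z_2^2$. It remains to bound $I:=\mathbb{E}\int_{Q_2}s^7\lambda_0^8\xi^7 e^{2s\alpha}z_1^2\,dx\,dt$.

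Here is where \cref{as1} enters. Assume, without loss of generality, $a_2\ge\sigma$ on $G_1\times(0,T)$, $\mathbb{P}$-a.s. (the case $a_2\le-\sigma$ is identical up to signs). Fix $\zeta\in C_0^\infty(G_1)$ with $0\le\zeta\le1$ and $\zeta\equiv1$ on $G_2$, and set $\omega:=\zeta\,s^7\lambda_0^8\xi^7 e^{2s\alpha}$, a smooth function whose $x$-support lies in $G_1$ and which vanishes as $t\to0^+$ and $t\to T^-$. Then $I\le\sigma^{-1}\mathbb{E}\int_{Q_1}\omega\,a_2z_1^2\,dx\,dt$, and we replace $a_2z_1\,dt$ by means of the second equation of \cref{eqEquationsAdj}: $a_2z_1\,dt=-dz_2-\Delta^2z_2\,dt-b_2z_2\,dt-b_3z_2\,dW(t)$. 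Applying Itô's formula to $\omega z_1z_2$ to rewrite $\int\omega z_1\,dz_2$ in terms of $\int\omega z_2\,dz_1$ (all boundary terms in $t$ vanish because $\omega\to0$ there, and all $dW$-integrals have zero expectation), inserting $dz_1=-(\Delta^2z_1+a_1z_1+b_1z_2)\,dt-a_3z_1\,dW(t)$ together with the covariation $d\langle z_1,z_2\rangle=a_3b_3z_1z_2\,dt$, and integrating by parts twice in $x$ in the two terms containing $\Delta^2z_2$ and $\Delta^2z_1$ (no boundary contributions, since $\omega z_1$ and $\omega z_2$ belong to $H^2_0(G_1)$ in $x$), one arrives at
\begin{equation}\label{eq.pp.expand}
I\le C\,\mathbb{E}\int_{Q_1}\Big(|\Delta(\omega z_1)|\,|\Delta z_2|+|\Delta(\omega z_2)|\,|\Delta z_1|+(|\omega_t|+\omega)\big(|z_1|\,|z_2|+z_2^2\big)\Big)dx\,dt,
\end{equation}
where $\Delta(\omega z_i)=\omega\Delta z_i+2\nabla\omega\cdot\nabla z_i+z_i\Delta\omega$; by the structure of $\omega$ and \cref{eqEstimatesOrder}, each of $\omega,\nabla\omega,\Delta\omega,\omega_t$ is bounded on $\operatorname{supp}\zeta$ by a polynomial in $s,\xi$ times $e^{2s\alpha}$.

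Now apply Young's inequality to every product in \cref{eq.pp.expand} \emph{asymmetrically}: every factor carrying a derivative of $z_1$, or $z_1$ itself, is given a small parameter $\delta$ times the matching left-hand weight of \cref{eq.pp.star} — namely $\delta s^6\lambda_0^8\xi^6 e^{2s\alpha}z_1^2$, $\delta s^4\lambda_0^6\xi^4 e^{2s\alpha}|\nabla z_1|^2$ or $\delta s^3\lambda_0^4\xi^3 e^{2s\alpha}|\Delta z_1|^2$, so that these terms are absorbed into the left side of \cref{eq.pp.star}; the complementary factors then carry weights that are polynomials in $s,\xi$ times $e^{2s\alpha}$. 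It then remains to remove the local derivatives of $z_2$ produced this way, which is done by a standard Caccioppoli-type (local energy) estimate for the fourth-order equation satisfied by $z_2$: multiply that equation by $\rho^4\cdot(\text{polynomial weight})\cdot e^{2s\alpha}z_2$ for a cut-off $\rho\in C_0^\infty(G_0)$ with $\rho\equiv1$ on $G_1$, apply Itô's formula and integrate by parts, treating the coupling term $a_2z_1$ once more by an \emph{asymmetric} Young's inequality (small weight $\sim\delta s^6\lambda_0^8\xi^6 e^{2s\alpha}$ on $z_1$, absorbed into \cref{eq.pp.star}; a large polynomial weight on $z_2$); this yields $\mathbb{E}\int_{Q_1}(\text{polynomial})e^{2s\alpha}\big(|\nabla z_2|^2+|\nabla^2 z_2|^2\big)\le\varepsilon\,\mathbb{E}\int_Q s^6\lambda_0^8\xi^6 e^{2s\alpha}z_1^2+C_\varepsilon\,\mathbb{E}\int_{Q_0}(\text{polynomial})e^{2s\alpha}z_2^2$. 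Substituting this back into \cref{eq.pp.expand}, choosing $\delta$ and $\varepsilon$ small and absorbing all $z_1$-terms via \cref{eq.pp.star}, one is left with $\tfrac12 I$ on the right, hence $I\le C\,\mathbb{E}\int_{Q_0}(\text{polynomial in }s,\xi)\,e^{2s\alpha}z_2^2\,dx\,dt$; one last application of \cref{lmS} (with $c_0=\tfrac12$) converts the polynomial weight into $e^{\frac32 s\alpha}$ and gives $I\le C\,\mathbb{E}\int_{Q_0}e^{\frac32 s\alpha}z_2^2$. Together with the first paragraph this proves \cref{propTwotoOne}.

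The main difficulty lies in the weight bookkeeping for the two rounds of asymmetric Young's inequality: one must verify that \emph{every} $z_1$-term generated — including the second-order ones coming from $\Delta(\omega z_1)$ and $\Delta(\omega z_2)$, and the one created by the coupling inside the Caccioppoli step — ends up with a weight no larger than the left-hand weights of \cref{eq.pp.star}, so that it can be absorbed, while every surviving term is a local $z_2$-integral over $Q_0$ carrying a weight of the form $(\text{polynomial in }s,\xi)\cdot e^{2s\alpha}$, which \cref{lmS} collapses onto $e^{\frac32 s\alpha}$. The fourth-order character of the system makes the integrations by parts, and especially the local energy estimate for $z_2$ (which in practice is run over a short nested chain of subdomains inside $G_0$), more intricate than in the second-order coupled case, but it introduces no new conceptual obstruction.
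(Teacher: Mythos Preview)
Your approach is correct but differs from the paper's in its execution.  You keep the gradient and Laplacian terms from the Carleman estimate and use them as a sink into which all $z_1$-derivatives generated by the substitution are absorbed via asymmetric Young inequalities, after which a separate Caccioppoli step removes the local $z_2$-derivatives.  The paper, by contrast, uses only the zero-order Carleman bound of \cref{prop1} and instead applies It\^o's formula once to a single auxiliary functional
\[
\mathcal{P}=e^{k\tau\alpha}\rho^{4/3}z_1^{2}+\beta\,e^{2\tau\alpha}\rho\,z_1z_2+e^{l\tau\alpha}\rho^{2/3}z_2^{2},
\]
with exponents chosen so that $k>2$, $19/10<l<2$, $k+l<4$.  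The cross piece produces $\beta\int e^{2\tau\alpha}\rho\,a_2 z_1^{2}$ directly, while the bi-Laplacian contributions of all three pieces combine, after two integrations by parts, into a term with a good sign plus remainders controlled by \cref{lmS}; choosing $\beta$ large and then $\tau=\tfrac{5}{6}s$, $r=\tfrac{9}{5}$ yields the result.  Your route is the natural transplant of the second-order coupled-system argument and has the virtue of modularity (each step is standard), at the price of heavier bookkeeping and the need for the full Carleman inequality; the paper's route is more compact, trading that bookkeeping for the discovery of the right exponents in~$\mathcal P$.
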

\begin{proof}
    From \cref{as1}, without loss of generality, we assume that $ a_{2}(x, t) \geq \sigma $, a.e. $ (x, t) \in G_{1} \times (0,T) $, $ \mathbb{P}\rm{-a.s.} $ 

    Thanks to \cref{prop1,lmS}, there exists a constant $ \tilde{s}_{2} > 0$ such that for all $ s \geq \tilde{s}_{2} $, we have 
    \begin{align} \label{eqObEstimatePf0} \notag
        \mathbb{E} \int_{Q} s^{6}  \xi^{6} e^{2 s \alpha} (z_{1}^{2} + z_{2}^{2}) d x d t
        & \leq 
        C \mathbb{E} \int_{Q_{2}} s^{7}  \xi^{7}  e^{\frac{1}{3} s \alpha} e^{\frac{5}{3} s \alpha} (z_{1}^{2} + z_{2}^{2}) d x d t
        \\
        & \leq 
        C \mathbb{E} \int_{Q_{2}}   e^{\frac{5}{3} s \alpha} (z_{1}^{2} + z_{2}^{2}) d x d t
        .
    \end{align}

Let $ \zeta \in C_{0}^{\infty}(G_{0}) $ be a cut-off function satisfying $ \zeta = 1 \text{ in } G_{2} $ and $ 0 \leq \zeta \leq 1 $ in $ G_{1} $.
Put $ \rho = \zeta^{12} $. 
Let $ \beta, k, l, r $ and $ \tau $ be positive numbers which will be specified later.

Let 
\begin{align*}
    \mathcal{P} \deq 
        e^{k \tau \alpha} \rho^{\frac{4}{3}} z_{1}^{2}
        + \beta e^{2 \tau \alpha} \rho  z_{1} z_{2}
        + e^{l \tau \alpha} \rho^{\frac{2}{3}} z_{2}^{2}
    .
\end{align*}
Using It\^o's formula,  we have 
\begin{align*}
    d \mathcal{P} & = 
    k \tau  \alpha_{t} e^{k \tau \alpha} \rho^{\frac{4}{3}} z_{1}^{2}
    + 2 e^{k \tau \alpha} \rho^{\frac{4}{3}} z_{1} d z_{1}
    + e^{k \tau \alpha} \rho^{\frac{4}{3}}  (d z_{1})^{2}
    \\
    & \quad  
    + 2 \tau  \beta \alpha_{t} e^{2 \tau \alpha} \rho  z_{1} z_{2}
    + \beta e^{2 \tau \alpha} \rho  (z_{1} d z_{2} + z_{2} d z_{1} + d z_{1} d z_{2})
    \\
    & \quad  
    + l \tau  \alpha_{t} e^{l \tau \alpha} \rho^{\frac{2}{3}} z_{2}^{2}
    + 2  e^{l \tau \alpha} \rho^{\frac{2}{3}} z_{2} d z_{2}
    +   e^{l \tau \alpha} \rho^{\frac{2}{3}} (d z_{2})^{2}
    .
\end{align*}
From \cref{eqEquationsAdj}, noting that $ \lim\limits_{t \rightarrow 0+} \theta(x, t) = \lim\limits_{t \rightarrow T-} \theta(x, t) = 0 $ for all $ x \in G $, we have 
\begin{align} \label{eqObEstimatePf1} \notag
    & \beta  \mathbb{E} \int_Q e^{2 \tau \alpha} \rho a_2 z_{1}^2 d x d t
    \\ \notag
    & =   
       \mathbb{E} \int_Q \big(
           k \tau e^{k \tau \alpha} \alpha_t \rho^{\frac{4}{3}} z_{1}^2
           -2 e^{k \tau \alpha} \rho^{\frac{4}{3}} a_1 z_{1}^2
           +e^{k \tau \alpha} \rho^{\frac{4}{3}} a_3^2 z_{1}^2
       \big) d x d t 
       \\ \notag
       & \quad 
       -\mathbb{E} \int_Q \big[
           2 e^{k \tau \alpha} \rho^{\frac{4}{3}} b_1 z_{1} z_{2}
           -2 \beta \tau e^{2 \tau \alpha} \alpha_t \rho z_{1} z_{2}
           +\beta e^{2 \tau \alpha} \rho (a_1+b_2-a_3 b_3 ) z_{1} z_{2}
           +2  e^{l \tau \alpha} \rho^{\frac{2}{3}} a_2 z_{1} z_{2} 
       \big] d x d t 
       \\ \notag
       & \quad 
       + \mathbb{E} \int_Q \big( 
           l \tau e^{l \tau \alpha} \alpha_t \rho^{\frac{2}{3}} z_{2}^2
           -\beta e^{2 \tau \alpha} \rho b_1 z_{2}^2
           -2  e^{l \tau \alpha} \rho^{\frac{2}{3}} b_2 z_{2}^2
           + e^{l \tau \alpha} \rho^{\frac{2}{3}} b_3^2 z_{2}^2 
       \big) d x d t 
       \\ \notag
       & \quad 
       - \mathbb{E} \int_Q \big(
           2 e^{k \tau \alpha} \rho^{\frac{4}{3}} z_{1} \Delta^{2} z_{1}
           +\beta e^{2 \tau \alpha} \rho(z_{2} \Delta^{2} z_{1} +z_{1} \Delta^{2} z_{2})
           +2  e^{l \tau \alpha} \rho^{\frac{2}{3}} z_{2} \Delta^{2} z_{2}
       \big) d x d t
       \\
       & \deq 
       \sum_{i=1}^{4} I_{i}
       .
\end{align}

Next, we estimate $ I_{i} $ for $ i=1,2,3,4 $.

Thanks to \cref{eqEstimatesOrder,eqObEstimatePf1}, assuming $ k > 2 $ and  noting  \cref{lmS}, there exists $ \tau_{1} > 0$ such that for all $ \tau \geq \tau_{1} $, it holds that 
\begin{align} 
    \label{eqObEstimatePfI1}
    \notag
    I_{1} 
    & = \mathbb{E} \int_Q \big(
        k \tau e^{k \tau \alpha} \alpha_t \rho^{\frac{4}{3}} z_{1}^2
        -2 e^{k \tau \alpha} \rho^{\frac{4}{3}} a_1 z_{1}^2
        +e^{k \tau \alpha} \rho^{\frac{4}{3}} a_3^2 z_{1}^2
    \big) d x d t 
    \\ \notag
    & =
    \mathbb{E} \int_Q \big(
        k \tau e^{ (k-2) \tau \alpha} \alpha_t \rho^{\frac{1}{3}} 
        -2 e^{(k-2) \tau \alpha} \rho^{\frac{1}{3}} a_1 
        +e^{(k-2) \tau \alpha} \rho^{\frac{1}{3}} a_3^2 
    \big) e^{2 \tau \alpha} \rho z_{1}^{2} d x d t 
    \\
    & \leq 
    \mathbb{E} \int_Q  e^{2 \tau \alpha} \rho z_{1}^{2} d x d t 
    .
\end{align}

Using \cref{eqEstimatesOrder,eqObEstimatePf1} again, assuming $ k > 2 $, $ 2>r $ and $ l > 1+ \frac{r}{2} $, noting  \cref{lmS}  and Cauchy-Schwarz inequality, there exists $ \tau_{2} > 0$ such that for all $ \tau \geq \tau_{2} $, it holds that 
\begin{align}
    \label{eqObEstimatePfI2} \notag
    I_{2} 
    & = 
    -\mathbb{E} \int_Q \big[
        2 e^{k \tau \alpha} \rho^{\frac{4}{3}} b_1 z_{1} z_{2}
        -2 \beta \tau e^{2 \tau \alpha} \alpha_t \rho z_{1} z_{2}
        +\beta e^{2 \tau \alpha} \rho (a_1+b_2-a_3 b_3 ) z_{1} z_{2}
        +2  e^{l \tau \alpha} \rho^{\frac{2}{3}} a_2 z_{1} z_{2} 
    \big] d x d t 
    \\
    & \leq 
     \mathbb{E} \int_Q  e^{2 \tau \alpha} \rho z_{1}^{2} d x d t 
    + (1+ \beta^{2}) \mathbb{E} \int_Q e^{r \tau \alpha} \rho^{\frac{1}{3}}  z_{2}^2  d x d t 
    .
\end{align}

Utilizing \cref{eqEstimatesOrder,eqObEstimatePf1}, assuming $ l > r $ and $ r < 2 $,   noting  \cref{lmS}, there exists $ \tau_{3} > 0$ such that for all $ \tau \geq \tau_{3} $, it holds that 
\begin{align}
    \label{eqObEstimatePfI3} 
    \notag
    I_{3}
    & =
    \mathbb{E} \int_Q \big( 
        l \tau e^{l \tau \alpha} \alpha_t \rho^{\frac{2}{3}} z_{2}^2
        -\beta e^{2 \tau \alpha} \rho b_1 z_{2}^2
        -2  e^{l \tau \alpha} \rho^{\frac{2}{3}} b_2 z_{2}^2
        + e^{l \tau \alpha} \rho^{\frac{2}{3}} b_3^2 z_{2}^2 
    \big) d x d t 
    \\ \notag
    & =
    \mathbb{E} \int_Q \big( 
        l \tau e^{(l-r) \tau \alpha} \alpha_t \rho^{\frac{1}{3}}  
        -\beta e^{(2-r) \tau \alpha} \rho^{\frac{2}{3}} b_1  
        -2  e^{(l-r) \tau \alpha} \rho^{\frac{1}{3}} b_2  
        + e^{(l-r) \tau \alpha} \rho^{\frac{1}{3}} b_3^2
    \big) e^{r \tau \alpha} \rho^{\frac{1}{3}}  z_{2}^2  d x d t 
    \\
    & \leq 
    (1+ \beta) \mathbb{E} \int_Q e^{r \tau \alpha} \rho^{\frac{1}{3}}  z_{2}^2  d x d t 
    .
\end{align}

In order to estimate $ I_{4} $, through the direct  calculation, for the $  C^{2}(G) $ function $ \mathcal{R}, u $ and $  v  $,   we have the following equality 
\begin{align} \label{eqObEstimatePf2} \notag
    & \sum_{i,j=1}^{n}  \mathcal{R}  u v_{x_{i}x_{i}x_{j}x_{j}}  
\\
& = \sum_{i, j=1}^n \big( \mathcal{R} u  v_{x_{i}x_{i}x_{j}}  
- \mathcal{R}_{x_{j}}  u v_{x_{i}x_{i}}
-\mathcal{R} u_{x_{j}} v_{x_{i}x_{i}} 
 \big ) _{x_{j}}
+  \Delta \mathcal{R}  u \Delta v 
+ 2 \nabla \mathcal{R} \cdot \nabla u \Delta v 
+  \mathcal{R}  \Delta u \Delta v 
.
\end{align}
From \cref{eqObEstimatePf2}, choosing $ u = v = z_{1} $ and $ \mathcal{R} = - 2 e^{k \tau \alpha} \rho^{\frac{4}{3}} $, utilizing the divergence theorem, we have 
\begin{align} \label{eqObEstimatePfI4eq1} \notag
    - \mathbb{E} \int_{Q} 2 e^{k \tau \alpha} \rho^{\frac{4}{3}} z_{1} \Delta^{2} z_{1} dxdt
     & =
    - 2 \mathbb{E} \int_{Q}  \Delta \big(e^{k \tau \alpha} \rho^{\frac{4}{3}}\big) z_{1} \Delta z_{1} dxdt
    - 4 \mathbb{E} \int_{Q}   \nabla \big(e^{k \tau \alpha} \rho^{\frac{4}{3}}\big) \nabla z_{1} \Delta z_{1} dxdt
    \\
    & \quad 
    - 2 \mathbb{E} \int_{Q}   e^{k \tau \alpha} \rho^{\frac{4}{3}}   |\Delta z_{1} |^{2} dxdt
    .
\end{align}
Thanks to Cauchy-Schwarz inequality,  \cref{eq.alpha,lmS}, recalling that $ k > 2 $, there exists $ \tau_{4} > 0$ such that for all $ \tau \geq \tau_{4} $, it holds that 
\begin{align}
    \label{eqObEstimatePfI4eq2}
    - 2 \mathbb{E} \int_{Q}  \Delta \big(e^{k \tau \alpha} \rho^{\frac{4}{3}}\big) z_{1} \Delta z_{1} dxdt
    & \leq 
    \frac{1}{8}\mathbb{E} \int_{Q}  e^{k \tau \alpha} \rho^{\frac{4}{3}}   |\Delta z_{1}|^{2} dxdt
    + \mathbb{E} \int_{Q}  e^{2 \tau \alpha} \rho    | z_{1}|^{2} dxdt
    ,
\end{align}
and
\begin{align}
    \label{eqObEstimatePfI4eq3}
    - 4 \mathbb{E} \int_{Q}   \nabla \big(e^{k \tau \alpha} \rho^{\frac{4}{3}}\big) \nabla z_{1} \Delta z_{1} dxdt
    & \leq 
    C \mathbb{E} \int_{Q}  e^{k \tau \alpha} \rho^{\frac{7}{6}}    | \nabla z_{1}|^{2} dxdt
    + \frac{1}{8}\mathbb{E} \int_{Q}  e^{k \tau \alpha} \rho^{\frac{4}{3}}   |\Delta z_{1}|^{2} dxdt
    .
\end{align}
Combining \cref{eqObEstimatePfI4eq1,eqObEstimatePfI4eq2,eqObEstimatePfI4eq3}, for $ \tau \geq \tau_{4} $, we have 
\begin{align} \label{eqObEstimatePf3} \notag
    - \mathbb{E} \int_{Q} 2 e^{k \tau \alpha} \rho^{\frac{4}{3}} z_{1} \Delta^{2} z_{1} dxdt
    & \leq 
     \mathbb{E} \int_{Q}  e^{2 \tau \alpha} \rho    | z_{1}|^{2} dxdt
    + C \mathbb{E} \int_{Q}  e^{k \tau \alpha} \rho^{\frac{7}{6}}    | \nabla z_{1}|^{2} dxdt
    \\
    & \quad 
    - \frac{7}{4} \mathbb{E} \int_{Q}   e^{k \tau \alpha} \rho^{\frac{4}{3}}   |\Delta z_{1} |^{2} dxdt
    .
\end{align}

From \cref{eqObEstimatePf2}, choosing $ u = v = z_{2} $ and $ \mathcal{R} = - 2 e^{l \tau \alpha} \rho^{\frac{2}{3}} $, recalling $ l > r $, similarly to \cref{eqObEstimatePf3}, there exists $ \tau_{5} > 0$ such that for all $ \tau \geq \tau_{5} $, we have 
\begin{align} \label{eqObEstimatePf4} \notag
    - \mathbb{E} \int_{Q} 2 e^{l \tau \alpha} \rho^{\frac{2}{3}} z_{2} \Delta^{2} z_{2} dxdt
    & \leq 
     \mathbb{E} \int_{Q}  e^{r \tau \alpha} \rho^{\frac{1}{3}}    | z_{2}|^{2} dxdt
    + C \mathbb{E} \int_{Q}  e^{l \tau \alpha} \rho^{\frac{1}{2}}    | \nabla z_{2}|^{2} dxdt
    \\
    & \quad 
    - \frac{7}{4} \mathbb{E} \int_{Q}   e^{l \tau \alpha} \rho^{\frac{2}{3}}   |\Delta z_{2} |^{2} dxdt
    .
\end{align}
From \cref{eqObEstimatePf2}, choosing  $ \mathcal{R} = - \beta e^{2 \tau \alpha} \rho $,  we have 
\begin{align} \label{eqObEstimatePfI4eq4} \notag
    & - \mathbb{E} \int_{Q} \beta e^{2 \tau \alpha} \rho(z_{2} \Delta^{2} z_{1} +z_{1} \Delta^{2} z_{2}) dxdt
    \\ \notag
    & =
    - \beta \mathbb{E} \int_{Q}  \Delta \big(e^{2 \tau \alpha} \rho \big) ( z_{1} \Delta z_{2} +z_{2} \Delta z_{1} ) dxdt
    - 2 \beta \mathbb{E} \int_{Q}   \nabla \big(e^{2 \tau \alpha} \rho\big) (\nabla z_{1} \Delta z_{2} +  \nabla z_{2} \Delta z_{1} ) dxdt
    \\
    & \quad 
    - \beta \mathbb{E} \int_{Q}   e^{2 \tau \alpha} \rho    \Delta z_{1} \Delta z_{2}  dxdt
    .
\end{align}
Thanks to Cauchy-Schwarz inequality,  \cref{eq.alpha,lmS}, assuming that $ k + l < 4 $, there exists $ \tau_{6} > 0$ such that for all $ \tau \geq \tau_{6} $, it holds that 
\begin{align}
    \label{eqObEstimatePfI4eq5} \notag
    & -   \beta \mathbb{E} \int_{Q}  \Delta \big(e^{2 \tau \alpha} \rho \big) ( z_{1} \Delta z_{2} +z_{2} \Delta z_{1} ) dxdt
    \\  \notag
    & \leq 
    \frac{1}{4}\mathbb{E} \int_{Q}  e^{k \tau \alpha} \rho^{\frac{4}{3}}   |\Delta z_{1}|^{2} dxdt
    + \frac{1}{4}\mathbb{E} \int_{Q}  e^{l \tau \alpha} \rho^{\frac{2}{3}}   |\Delta z_{2}|^{2} dxdt
    + \frac{1}{4} \mathbb{E} \int_{Q}  e^{k \tau \alpha} \rho    | z_{1}|^{2} dxdt
    \\
    & \quad 
    + \frac{1}{4} \mathbb{E} \int_{Q}  e^{l \tau \alpha}  \rho^{\frac{1}{3}} | z_{2}|^{2} dxdt
    ,
\end{align} 
\begin{align}
    \label{eqObEstimatePfI4eq6} \notag
    & - 2  \beta  \mathbb{E} \int_{Q}   \nabla \big(e^{2 \tau \alpha} \rho\big) (\nabla z_{1} \Delta z_{2} +  \nabla z_{2} \Delta z_{1} )  dxdt
    \\  \notag
    & \leq 
    \frac{1}{4}\mathbb{E} \int_{Q}  e^{k \tau \alpha} \rho^{\frac{4}{3}}   |\Delta z_{1}|^{2} dxdt
    + \frac{1}{4}\mathbb{E} \int_{Q}  e^{l \tau \alpha} \rho^{\frac{2}{3}}   |\Delta z_{2}|^{2} dxdt
    + \frac{1}{4} \mathbb{E} \int_{Q}  e^{k \tau \alpha}  \rho^{\frac{7}{6}}    | \nabla z_{1}|^{2} dxdt
    \\
    & \quad 
    + \frac{1}{4} \mathbb{E} \int_{Q}  e^{l \tau \alpha}  \rho^{\frac{1}{2}} | \nabla z_{2}|^{2} dxdt
    ,
\end{align}
and
\begin{align}
    \label{eqObEstimatePfI4eq7}  
     -  \beta \mathbb{E} \int_{Q}   e^{2 \tau \alpha} \rho    \Delta z_{1} \Delta z_{2}  dxdt
    & \leq 
    \frac{1}{4}\mathbb{E} \int_{Q}  e^{k \tau \alpha} \rho^{\frac{4}{3}}   |\Delta z_{1}|^{2} dxdt
    + \frac{1}{4}\mathbb{E} \int_{Q}  e^{l \tau \alpha} \rho^{\frac{2}{3}}   |\Delta z_{2}|^{2} dxdt
    .
\end{align}
Combining \cref{eqObEstimatePfI4eq4,eqObEstimatePfI4eq5,eqObEstimatePfI4eq6,eqObEstimatePfI4eq7}, for $ \tau  \geq \tau_{6} $, it holds that 
\begin{align} \label{eqObEstimatePf5} \notag
    & - \mathbb{E} \int_{Q} \beta  e^{2 \tau \alpha} \rho(z_{2} \Delta^{2} z_{1} +z_{1} \Delta^{2} z_{2}) dxdt
    \\  \notag
    & \leq 
    \frac{1}{2}\mathbb{E} \int_{Q}  e^{k \tau \alpha} \rho^{\frac{4}{3}}   |\Delta z_{1}|^{2} dxdt
    + \frac{1}{2}\mathbb{E} \int_{Q}  e^{l \tau \alpha} \rho^{\frac{2}{3}}   |\Delta z_{2}|^{2} dxdt
    + \frac{1}{4} \mathbb{E} \int_{Q}  e^{k \tau \alpha}  \rho^{\frac{7}{6}}    | \nabla z_{1}|^{2} dxdt
    \\
    & \quad 
    + \frac{1}{4} \mathbb{E} \int_{Q}  e^{l \tau \alpha}  \rho^{\frac{1}{2}} | \nabla z_{2}|^{2} dxdt
    .
\end{align}

Hence, from \cref{eqObEstimatePf1,eqObEstimatePf3,eqObEstimatePf4,eqObEstimatePf5}, for $ \tau \geq \tau_{7} = \max \{ \tau_{4}, \tau_{5}, \tau_{6} \} $, we have 
\begin{align}
    \label{eqObEstimatePf6} \notag
    I_{4}
    & = 
    - \mathbb{E} \int_Q \big(
        2 e^{k \tau \alpha} \rho^{\frac{4}{3}} z_{1} \Delta^{2} z_{1}
        + \beta e^{2 \tau \alpha} \rho(z_{2} \Delta^{2} z_{1} +z_{1} \Delta^{2} z_{2})
        +2  e^{l \tau \alpha} \rho^{\frac{2}{3}} z_{2} \Delta^{2} z_{2}
    \big) d x d t
    \\  \notag
    & \leq 
    - \frac{5}{4}\mathbb{E} \int_{Q}  e^{k \tau \alpha} \rho^{\frac{4}{3}}   |\Delta z_{1}|^{2} dxdt
    - \frac{5}{4}\mathbb{E} \int_{Q}  e^{l \tau \alpha} \rho^{\frac{2}{3}}   |\Delta z_{2}|^{2} dxdt
    + C \mathbb{E} \int_{Q}  e^{k \tau \alpha}  \rho^{\frac{7}{6}}    | \nabla z_{1}|^{2} dxdt
    \\
    & \quad 
    + C \mathbb{E} \int_{Q}  e^{l \tau \alpha}  \rho^{\frac{1}{2}} | \nabla z_{2}|^{2} dxdt
    + \mathbb{E} \int_{Q}  e^{2 \tau \alpha} \rho    | z_{1}|^{2} dxdt
    + \mathbb{E} \int_{Q}  e^{r \tau \alpha} \rho^{\frac{1}{3}}    | z_{2}|^{2} dxdt
    .
\end{align}

Through the   direct  calculation, for  $  C^{2}(G) $ functions $ \mathcal{R}  $ and $  u  $,   we have the following equality 
\begin{align} \notag
    \mathcal{R}  | \nabla u |^{2}   
    =
    \sum_{i=1}^{n} \bigg(\mathcal{R} u_{x_{i}} u - \frac{1}{2} \mathcal{R}_{x_{i}} u^{2}\bigg)_{x_{i}}
    + \frac{1}{2} \Delta \mathcal{R} u^{2} 
    - \mathcal{R} \Delta u u
    .
\end{align}
Choosing $ \mathcal{R} =  e^{k \tau \alpha}  \rho^{\frac{7}{6}}  $ and $ u = z_{1} $, from \cref{eqEquationsAdj,lmS},  there exists $ \tau_{8} > 0$ such that for all $ \tau \geq \tau_{8} $, it holds that 
\begin{align}
    \label{eqObEstimatePf7} \notag
    \mathbb{E} \int_{Q}  e^{k \tau \alpha}  \rho^{\frac{7}{6}}    | \nabla z_{1}|^{2} dxdt
    & =  
    \frac{1}{2} \mathbb{E} \int_{Q} \Delta \Bigl( e^{k \tau \alpha}  \rho^{\frac{7}{6}}  \Bigr) |z_{1}|^{2}  dxdt
    - \mathbb{E} \int_{Q}    e^{k \tau \alpha}  \rho^{\frac{7}{6}}  \Delta z_{1} z_{1}  
    dxdt
    \\
    & \leq 
      \mathbb{E} \int_{Q}  e^{2 \tau \alpha} \rho    | z_{1}|^{2} dxdt 
    + \frac{1}{4 C} \mathbb{E} \int_{Q}  e^{k \tau \alpha} \rho^{\frac{4}{3}}   |\Delta z_{1}|^{2} dxdt
    .
\end{align}
Similarly, there exists $ \tau_{9} > 0$ such that for all $ \tau \geq \tau_{9} $, we have 
\begin{align}
    \label{eqObEstimatePf8} \notag
    \mathbb{E} \int_{Q}  e^{l \tau \alpha}  \rho^{\frac{1}{2}} | \nabla z_{2}|^{2} dxdt
    & =  
    \frac{1}{2} \mathbb{E} \int_{Q} \Delta \Bigl(  e^{l \tau \alpha}  \rho^{\frac{1}{2}} \Bigr) |z_{2}|^{2}  dxdt
    - \mathbb{E} \int_{Q}     e^{l \tau \alpha}  \rho^{\frac{1}{2}}  \Delta z_{2} z_{2}  
    dxdt
    \\
    & \leq 
      \mathbb{E} \int_{Q}  e^{r \tau \alpha} \rho^{\frac{1}{3}}    | z_{2}|^{2} dxdt 
    + \frac{1}{4 C} \mathbb{E} \int_{Q}  e^{l \tau \alpha} \rho^{\frac{2}{3}}   |\Delta z_{2}|^{2} dxdt
    .
\end{align}
Combining \cref{eqObEstimatePf6,eqObEstimatePf7,eqObEstimatePf8},  for $ \tau \geq \tau_{10} = \max \{ \tau_{7}, \tau_{8}, \tau_{9} \} $, we obtain 
\begin{align}
    \label{eqObEstimatePf9}
    \notag
    I_{4}
    & \leq 
    -  \mathbb{E} \int_{Q}  e^{k \tau \alpha} \rho^{\frac{4}{3}}   |\Delta z_{1}|^{2} dxdt
    -  \mathbb{E} \int_{Q}  e^{l \tau \alpha} \rho^{\frac{2}{3}}   |\Delta z_{2}|^{2} dxdt
    + C \mathbb{E} \int_{Q}  e^{2 \tau \alpha} \rho    | z_{1}|^{2} dxdt
    \\
    & \quad 
    + C \mathbb{E} \int_{Q}  e^{r \tau \alpha} \rho^{\frac{1}{3}}    | z_{2}|^{2} dxdt
    .
\end{align}

From \cref{eqObEstimatePf9,eqObEstimatePfI3,eqObEstimatePfI2,eqObEstimatePfI1,eqObEstimatePf1}, for $ \tau \geq \tau_{11} = \max \{ \tau_{1}, \tau_{2}, \tau_{3}, \tau_{10} \} $, it holds that 
\begin{align*}
    \beta  \mathbb{E} \int_Q e^{2 \tau \alpha} \rho a_2 z_{1}^2 d x d t
    \leq 
    C \mathbb{E} \int_{Q}  e^{2 \tau \alpha} \rho    | z_{1}|^{2} dxdt
    + C (1+ \beta^{2}) \mathbb{E} \int_{Q}  e^{r \tau \alpha} \rho^{\frac{1}{3}}    | z_{2}|^{2} dxdt
    .
\end{align*}
Choosing $ \beta = 2 C $, recalling that $ a_{2}(x, t) \geq \sigma $, a.e. $ (x, t) \in G_{1} \times (0,T) $, $ \mathbb{P}\rm{-a.s.} $,  for $ \tau \geq \tau_{11} $,  we have 
\begin{align*}
    \mathbb{E} \int_Q e^{2 \tau \alpha} \rho  z_{1}^2 d x d t
    \leq 
    C  \mathbb{E} \int_{Q}  e^{r \tau \alpha} \rho^{\frac{1}{3}}    | z_{2}|^{2} dxdt
    .
\end{align*}
Choosing $ \tau = \frac{5}{6} s $ and $ r = \frac{9}{5} $, recalling that $ \zeta = 1 \text{ in } G_{2} $ and $ \zeta \in C_{0}^{\infty}(G_{0}) $, for $ s \geq \frac{6}{5} \tau_{11} $, it holds that 
\begin{align*}
    \mathbb{E} \int_{Q_{2}} e^{ \frac{5}{3} s \alpha}   z_{1}^2 d x d t
    \leq 
    C  \mathbb{E} \int_{Q_{0}}  e^{ \frac{3}{2} s  \alpha}     | z_{2}|^{2} dxdt
    .
\end{align*}
Combining it with \cref{eqObEstimatePf0}, letting $ \tilde{s}_{1} = \max \{\tilde{s}_{2}, \frac{6}{5} \tau_{11} \} $, we complete the proof.
\end{proof}

Now, we are in a position to prove \cref{thmObservabilityEstimateI}.

\begin{proof}[Proof of \cref{thmObservabilityEstimateI}]

Thanks to \cref{propTwotoOne}, fixing $ \lambda = \lambda_{0} $ and $ s = \tilde{s}_{1} $, for the solution $ (z_{1}, z_{2}) $ of the system \cref{eqEquationsAdj}, it holds that 
\begin{align}
    \label{eqProofOEI1}
    \mathbb{E} \int_{Q}   \xi^{6} e^{2 s \alpha} (z_{1}^{2} + z_{2}^{2}) d x d t
    \leq 
    C \mathbb{E} \int_{Q_{0}}   e^{ \frac{3}{2} s  \alpha}   z_{2}^{2}  d x d t
    .
\end{align}
Recalling \cref{eq.alpha}, we have 
\begin{align}
    \label{eqProofOEI2}
    \mathbb{E} \int_{Q}   \xi^{6} e^{2 s \alpha} (z_{1}^{2} + z_{2}^{2}) d x d t
    \geq 
    \min_{x \in \overline{G}}  \big ( \xi^{6}(x,T/2) + e^{2 s \alpha(x,T/4)}  \big ) 
    \mathbb{E} \int_{\frac{T}{4}}^{\frac{3T}{4}}  \int_{G}  (z_{1}^{2} + z_{2}^{2}) d x d t
    ,
\end{align}
and
\begin{align}
    \label{eqProofOEI3}
    \mathbb{E} \int_{Q_{0}}   e^{ \frac{3}{2} s  \alpha}   z_{2}^{2}  d x d t
    \leq 
    \max_{(t, x) \in \overline{Q}} e^{\frac{3}{2} s \alpha} 
    \mathbb{E} \int_{Q_{0}}     z_{2}^{2}  d x d t
    .
\end{align}
Combining \cref{eqProofOEI3,eqProofOEI2,eqProofOEI1}, we get 
\begin{align} 
    \label{eqProofOEI4} \notag
    \mathbb{E} \int_{\frac{T}{4}}^{\frac{3T}{4}}  \int_{G}  (z_{1}^{2} + z_{2}^{2}) d x d t
    & \leq 
    C \frac{\max_{(t, x) \in \overline{Q}} e^{\frac{3}{2} s \alpha} }{\min_{x \in \overline{G}}  \big ( \xi^{6}(x,T/2) + e^{2 s \alpha(x,T/4)}  \big )}
    \mathbb{E} \int_{Q_{0}}     z_{2}^{2}  d x d t
    \\
    & \leq 
    C  \mathbb{E} \int_{Q_{0}}     z_{2}^{2}  d x d t
    .
\end{align}

Thanks to  It\^o's formula, for $ 0 \leq t_{1}\leq t_{2} \leq T $, we have
\begin{align*}
    \notag
    & 
    \mathbb{E} \int_{G} ( z_{1}^{2}(t_{2}) + z_{2}^{2}(t_{2}) )dx 
    -\mathbb{E} \int_{G} ( z_{1}^{2}(t_{1}) + z_{2}^{2}(t_{1}) ) dx 
    =
    \mathbb{E} \int_{t_{1}}^{t_{2}} \int_{G} d( z_{1}^2 + z_{2}^{2})dx
    \\ \notag
    &
    =  
    - \mathbb{E} \int_{t_{1}}^{t_{2}} \int_{G} [ 
        2 z_{1} ( \Delta^{2} z_{1} +   a_{1}z_{1} + b_{1} z_{2})   
        + 2 z_{2} ( \Delta^{2} z_{2} +   a_{2}z_{1} + b_{2} z_{2})   
        - a_{3}^{2} z_{1}^{2}
        - b_{3}^{2} z_{2}^{2} 
    ] dxdt
    \\
    & \leq
    - C   \mathbb{E} \int_{t_{1}}^{t_{2}} \int_{G} ( z_{1}^{2} + z_{2}^{2} ) dxdt
        .
\end{align*}
Hence, using Gronwall inequality, for all $ t \in (0,T) $, we have 
\begin{align*}
    \mathbb{E} \int_{G} (z_{1}^{2}(T) + z_{2}^{2}(T)) d x 
    \leq 
    C \mathbb{E} \int_{G} (z_{1}^{2}(t) + z_{2}^{2}(t)) d x 
    .
\end{align*}
Combining it with \cref{eqProofOEI4}, we complete the proof.
\end{proof}

\section{Proof of the null controllability} \label{sc.5}

\begin{proof}[Proof of \cref{thmControlResult}]

We define the linear subspace $ \mathcal{Y} $ of $ L_{\mathbb{F}}^{2}(0, T ; L^{2}(G_{0}))  $  to be  
\begin{align*}
    \mathcal{Y} =\big\{ \chi_{G_{0}} z_{2}  \mid  (z_{1}, z_{2}) ~  \text{solves the system \cref{eqEquationsAdj}  with} (z_{1}^{0}, z_{2}^{0}) \in  L^{2}(G)  \times L^{2}(G)  \big \}
    ,
\end{align*}
and define a linear functional $F$ on $ \mathcal{Y}  $ as follows
\begin{align*}
    F(\chi_{G_{0}} z_{2} )= \mathbb{E}\int_{G} ( y_{1}^{T} z_{1}(T) + y_{2}^{T} z_{2}(T)) d x.
\end{align*}
Thanks to \cref{thmObservabilityEstimateI}, it holds that 
\begin{align}\label{eq.pfNullConS2-1}\notag
    |F(\chi_{G_{0}} z_{2} )| & =\Bigl| \mathbb{E}\int_{G} ( y_{1}^{T} z_{1}(T) + y_{2}^{T} z_{2}(T)) d x \Bigr|
    \\\notag
    &
    \leq 
    \Bigl( \mathbb{E}\int_{G} ( |y_{1}^{T}|^{2} + |y_{2}^{T}|^{2} ) d x \Bigr)^{\frac{1}{2}}
    \Bigl( \mathbb{E}\int_{G} ( | z_{1}(T)|^{2} + | z_{2}(T)|^{2} ) d x \Bigr)^{\frac{1}{2}}
    \\
    & \leq C  \Bigl( \mathbb{E}\int_{G} ( |y_{1}^{T}|^{2} + |y_{2}^{T}|^{2} ) d x \Bigr)^{\frac{1}{2}} |z_{2}|_{L^{2}_{\mathbb{F}}(0,T; L^{2}(G_{0}))}
    .
\end{align}
Hence, $ F $ is a bounded linear functional on $ \mathcal{Y} $. 
By Hahn-Banach theorem, $ F $ can be extended to a bounded linear functional on $L_{\mathbb{F}}^{2}(0, T ; L^{2}(G_{0})) $. 
For simplicity, we also denote $ F $ for this extension.
From Riesz representation theorem, there exists $u \in L_{\mathbb{F}}^{2}(0, T ; L^{2}(G_{0})) $ such that 
\begin{align}\label{eq.pfNullConS3-1}
    \mathbb{E}\int_{G} ( y_{1}^{T} z_{1}(T) + y_{2}^{T} z_{2}(T)) d x  =F( \chi_{G_{0}} z_{2} ) 
    = \mathbb{E} \int_{0}^{T} \int_{G_{0}} z_{2} u d x d t
    .
\end{align}

We claim that $ u $ is the desired control. 
In fact, thanks to It\^o's formula and \cref{eqEquations}, we know that 
\begin{align}
    \label{eq.pfNullConS4-1}
    \notag
    & \mathbb{E} \int_{G} y_{1}^{T} z_{1}(T) d x-\int_{G}  y_{1} (0) z_{1}^{0}  d x   
    = \mathbb{E} \int_{0}^{T} \int_{G} d(  y_{1} z_{1}) dx
    \\ \notag
    & 
    = \mathbb{E} \int_{0}^{T} \int_{G} (z_{1} d  y_{1} +  y_{1} d z_{1} + d  y_{1} d z_{1}) dx 
    \\ \notag
    & 
    = \mathbb{E} \int_{0}^{T} \int_{G} 
    [
        ( \Delta z_{1} \Delta  y_{1} + a_{1} z_{1}  y_{1} + a_{2} z_{1} y_{2} +   a_{3} Y_{1} z_{1}  )
        + ( -  \Delta z_{1} \Delta  y_{1} - a_{1} z_{1}  y_{1} - b_{1} z_{2} y_{1} )
        -  a_{3} z_{1} Y_{1}
    ]
    dx dt 
    \\ 
    & 
    =\mathbb{E} \int_{0}^{T} \int_{G}   (a_{2} z_{1} y_{2}  - b_{1} z_{2} y_{1} ) d x d t,
\end{align}
and
\begin{align}
    \label{eq.pfNullConS4-2}
    \notag
    & \mathbb{E} \int_{G} y_{2}^{T} z_{2}(T) d x-\int_{G}  y_{2} (0) z_{2}^{0}  d x   
    = \mathbb{E} \int_{0}^{T} \int_{G} d(  y_{2} z_{2}) dx
    \\ \notag
    & 
    = \mathbb{E} \int_{0}^{T} \int_{G} (z_{2} d  y_{2} +  y_{2} d z_{2} + d  y_{2} d z_{2}) dx 
    \\ \notag
    & 
    = \mathbb{E} \int_{0}^{T} \int_{G} 
    [
        ( \Delta z_{2} \Delta  y_{2} + b_{1} z_{2}  y_{2} + b_{2} z_{2} y_{2} +   b_{3} Y_{2} z_{2} + \chi_{G_{0}} u z_{2}  )
        + ( -  \Delta z_{2} \Delta  y_{2} - a_{2} z_{1}  y_{2} - b_{2} z_{2} y_{2} )
    \\ \notag
    & \qquad \qquad \quad  ~
        -  b_{3} z_{2} Y_{2}
    ]
    dx dt 
    \\ 
    & 
    =
    \mathbb{E} \int_{0}^{T} \int_{G}   (b_{1} z_{2} y_{2}   - a_{2} z_{2} y_{2} ) d x d t
    + \mathbb{E} \int_{0}^{T} \int_{G_{0}} u z_{2}  d x d t
    .
\end{align}
Combining \cref{eq.pfNullConS3-1,eq.pfNullConS4-1,eq.pfNullConS4-2}, for any $ (z_{1}^{0}, z_{2}^{0}) \in  L^{2}(G)  \times L^{2}(G)  $, we have 
\begin{align*}
    \int_{G}  y_{1} (0) z_{1}^{0}  d x + \int_{G}  y_{2} (0) z_{2}^{0}  d x   = 0.
\end{align*}
Hence,  this implies $ (y_{1}(0), y_{2}(0)) = 0 $ in $ G $, $ \mathbb{P}\rm{-a.s.} $, which completes the proof.
\end{proof}

\appendix

\section{Proof of Lemma 2.4}\label{secProofLemma}

We need the following regularity estimate.

\begin{lemma} \label{lemmaRegularityEstimate}
    There exists a constant $ C > 0 $, such that for all $ h \in L^{2}_{\mathbb{F}}(0, T; H^{4}(G)) $, $ f \in L^{2}_{\mathbb{F}}(0, T; L^{2}(G)) $ and $ g \in L^{2}_{\mathbb{F}}(0, T; H_{0}^{2}(G)) $ satisfying
    \begin{align*}
        \left \{
        \begin{alignedat}{2}
            & d h + \Delta^2 h d t =    f d t  + g d W(t) && \quad \text { in } Q, \\
            & h= \frac{\partial h}{\partial \nu} =0 && \quad \text { on } \Sigma,\\
            & h(0, \cdot)=0 && \quad \text { in } G,
        \end{alignedat}
        \right .
    \end{align*}
    we have 
    \begin{align*}
        |h|^{2}_{L^{2}_{\mathbb{F}}(0, T; H^{4}(G))} 
        \leq C ( 
            |f|^{2}_{L^{2}_{\mathbb{F}}(0, T; L^{2}(G))} 
            + |g|^{2}_{L^{2}_{\mathbb{F}}(0, T; H^{2}(G))} 
        )
        .
    \end{align*}
\end{lemma}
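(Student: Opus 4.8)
The plan is to diagonalize the clamped biharmonic operator and solve the resulting scalar stochastic ODEs explicitly. Let $A$ be the realization of $\Delta^2$ in $L^2(G)$ with domain $D(A)=\{u\in H^4(G): u=\partial_\nu u=0 \text{ on }\Gamma\}$. Since $\Gamma$ is $C^4$, $A$ is self-adjoint, positive and has compact resolvent, so there is an orthonormal basis $\{e_k\}_{k\ge1}$ of $L^2(G)$ with $Ae_k=\mu_k e_k$, $0<\mu_1\le\mu_2\le\cdots\to\infty$, and $e_k\in D(A)\subset H^4(G)$. By elliptic regularity for the clamped biharmonic problem on a $C^4$ domain, $\|u\|_{H^4(G)}^2\le C\bigl(\|\Delta^2 u\|_{L^2(G)}^2+\|u\|_{L^2(G)}^2\bigr)$ for $u\in D(A)$, i.e. $\|u\|_{H^4(G)}^2\le C\sum_{k\ge1}(1+\mu_k)^2 u_k^2$ with $u_k=\langle u,e_k\rangle$. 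Also, for $g\in H_0^2(G)$ one has $\sum_{k\ge1}\mu_k g_k^2=\|A^{1/2}g\|_{L^2(G)}^2=\int_G|\Delta g|^2\,dx\le\|g\|_{H^2(G)}^2$, using that the Dirichlet form of $A$ is $a(u,v)=\int_G\Delta u\,\Delta v\,dx$ on $H_0^2(G)$.

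Next I would expand the solution. Put $h_k(t)=\langle h(t),e_k\rangle$, $f_k=\langle f,e_k\rangle$, $g_k=\langle g,e_k\rangle$. Testing the weak formulation against $e_k$ and using $\langle\Delta^2 h,e_k\rangle=a(h,e_k)=a(e_k,h)=\mu_k h_k$ (legitimate since $h(t),e_k\in H_0^2(G)$) gives $dh_k+\mu_k h_k\,dt=f_k\,dt+g_k\,dW(t)$, $h_k(0)=0$, whence
\[
h_k(t)=\int_0^t e^{-\mu_k(t-s)}f_k(s)\,ds+\int_0^t e^{-\mu_k(t-s)}g_k(s)\,dW(s)=:p_k(t)+q_k(t).
\]
These manipulations are justified rigorously by a standard Galerkin truncation to $\mathrm{span}\{e_1,\dots,e_N\}$: the estimates below are uniform in $N$, and letting $N\to\infty$ both identifies the limit with the unique weak solution and shows it lies in $L^2_{\mathbb F}(0,T;H^4(G))$.

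Then I would estimate $p_k$ and $q_k$ separately. Since $p_k=(e^{-\mu_k\cdot}\mathbf 1_{[0,\infty)})*(f_k\mathbf 1_{[0,T]})$ and $\|e^{-\mu_k\cdot}\|_{L^1(0,\infty)}=\mu_k^{-1}$, Young's convolution inequality gives $\int_0^T p_k^2\,dt\le\mu_k^{-2}\int_0^T f_k^2\,dt$, hence $(1+\mu_k)^2\int_0^T p_k^2\,dt\le C\int_0^T f_k^2\,dt$ (using $\mu_k\ge\mu_1>0$). For the stochastic part, Itô's isometry and Fubini give
\[
\mathbb E\int_0^T q_k(t)^2\,dt=\mathbb E\int_0^T g_k(s)^2\Bigl(\int_s^T e^{-2\mu_k(t-s)}\,dt\Bigr)ds\le\frac{1}{2\mu_k}\,\mathbb E\int_0^T g_k(s)^2\,ds,
\]
so $(1+\mu_k)^2\,\mathbb E\int_0^T q_k^2\,dt\le C\mu_k\,\mathbb E\int_0^T g_k^2\,dt$. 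Summing over $k$ and invoking the two spectral inequalities from the first step,
\[
\mathbb E\int_0^T\|h(t)\|_{H^4(G)}^2\,dt\le C\sum_{k\ge1}(1+\mu_k)^2\,\mathbb E\int_0^T h_k^2\,dt\le C\Bigl(\mathbb E\int_0^T\|f\|_{L^2(G)}^2\,dt+\mathbb E\int_0^T\|g\|_{H^2(G)}^2\,dt\Bigr),
\]
which is the claim. The only delicate point is the stochastic term: the crude analytic‑semigroup bound $\|Ae^{-tA}\|_{\mathcal L(L^2(G))}\le Ct^{-1}$ is \emph{not} integrable in $t$ here, so one cannot argue by stochastic convolution estimates based on it; instead one must exploit the self‑adjointness of $A$ through the exact identity $\int_s^T e^{-2\mu_k(t-s)}\,dt\le(2\mu_k)^{-1}$, which is exactly what produces the ``half‑derivative'' gain $g\in H^2\Rightarrow h\in H^4$.
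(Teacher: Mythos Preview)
Your argument is correct and takes a genuinely different route from the paper. Both proofs start from the spectral decomposition of the clamped biharmonic operator $A$, but then diverge: the paper derives an energy identity by applying It\^o's formula to $\langle h^m,Ah^m\rangle$ on a Galerkin subspace (which requires observing that the eigenfunctions lie in $H^8(G)$ so that terms like $h^m\Delta^4 h^m$ make sense), and reads off the bound on $\|\Delta^2 h^m\|_{L^2}$ from the resulting inequality. You instead solve each scalar mode explicitly via Duhamel, bound the deterministic convolution by Young's inequality and the stochastic convolution by It\^o's isometry plus the exact integral $\int_s^T e^{-2\mu_k(t-s)}\,dt\le(2\mu_k)^{-1}$, and then sum using $\sum_k\mu_k g_k^2=\|\Delta g\|_{L^2}^2$ for $g\in H_0^2(G)$. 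Your approach is somewhat more transparent about \emph{why} only $H^2$ regularity of $g$ is needed (the ``half-derivative gain'' you highlight), and it sidesteps the higher eigenfunction regularity the paper invokes; on the other hand, the paper's energy method is closer in spirit to arguments that extend beyond the self-adjoint setting. Both are complete.
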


\begin{proof}

Let $ \mathsf{A} $ be an unbounded operator on $ L^{2}(G) $ as follows
\begin{equation*}
    \left\{\begin{aligned}
        &\mathcal{D}(\mathsf{A})= \bigg\{ \varphi \in H^{4}(G)  \mid  \varphi = \frac{\partial \varphi}{\partial \nu} = 0 \text{ on } \Gamma \bigg\},   \\
        &\mathsf{A} \varphi= \Delta^{2} \varphi,    \quad \forall \varphi \in \mathcal{D}(\mathsf{A}).
    \end{aligned}\right.
\end{equation*}
Denote by $0 <\mu_1<\mu_2\leq \cdots $ the eigenvalues of $\mathsf{A}$ and
by $\{\phi_{i}\}_{i=1}^{\infty} \subset \mathcal{D}(\mathsf{A}) $ the corresponding eigenfunctions, which serves as a  standard orthonormal basis of $ L^{2}(G) $.
By means of the regularity for solutions to the elliptic equations (see \cite[Chapter 2, Theorem 5.1]{Lions1972a}), we know that  $ \phi_{i} \in H^{8}(G) $, and
\begin{align*}
    \phi_{i} = |\nabla \phi_{i}|=|\Delta^{2} \phi_{i}| = |\nabla \Delta^{2} \phi_{i}| = 0
\end{align*}
on $ \Gamma $ for $ i=1,2,3, \cdots $

Consider the following stochastic differential equations 
\begin{equation}\label{eq.lemma41E1}
    \begin{cases}
        d c_{i}^{m} = \mu_{i} c_{i}^{m} dt + ( f, \phi_{i} )_{L^{2}(G)}  dt +  (g, \phi_{i})_{L^{2}(G)} dW(t),\\
        c_{i}^{m}(0)=0,
    \end{cases}
\end{equation}
where $ i =1,  \cdots, m $.
Clearly, we have $  ( f, \phi_{i} )_{L^{2}(G)}, ~ ( g, \phi_{i} )_{L^{2}(G)} \in L^{2}_{\mathbb{F}}(0,T;\mathbb{R}) $. 
Thanks to the classical theory of stochastic differential equations (see \cite[Theorem 3.2]{Lue2021a}), equation \cref{eq.lemma41E1}   exists a unique solution  $  c_{i}^{m} (\cdot)  \in L_{\mathbb{F}}^{2}(\Omega ; C([0, T] ; \mathbb{R})) $ for  $i=1, \ldots, m$.

Let $ h^{m} = \sum\limits_{i=1}^{m} c_{i}^{m}(t) \phi_{i} $. 
From \cref{eq.lemma41E1}, we have 
\begin{align}\label{eq.lemma41E3}
    d h^{m} = -\Delta^{2} h^{m} dt + f^{m} dt + g^{m} dW(t),
\end{align}
where $  f^{m} = \sum\limits_{i=1}^{m} ( f, \phi_{i} )_{L^{2}(G)} \phi_{i}  $ and $  g^{m} = \sum\limits_{i=1}^{m} ( g, \phi_{i} )_{L^{2}(G)} \phi_{i}  $. Note that $ h^{m} (0) \equiv 0 $.
Thanks to It\^o's formula, it holds that 
\begin{equation}\label{eq.lemma41E4}
    d( h^{m} \Delta^{2} h^{m} ) = d h^{m} \Delta^{2} h^{m} + h^{m} \Delta^{2} (d h^{m} ) + d h^{m} d (\Delta^{2} h^{m}). 
\end{equation}
Integrating it on $ Q $, taking expectation on both sides, we have 
\begin{align}\label{eq.lemma41E4_0}
    \mathbb{E} \int _{G} h^{m}(T) \Delta^{2} h^{m}(T) dx= \mathbb{E} \int_{Q} [ 
    d h^{m} \Delta^{2} h^{m} + h^{m} \Delta^{2} (d h^{m} ) + d h^{m} d (\Delta^{2} h^{m})  
    ] dx.
\end{align}
From the definition of $ h^{m} $ and $ \phi_{i} $, we obtain 
\begin{equation}\label{eq.lemma41E4_1}
    \mathbb{E} \int _{G} h^{m}(T) \Delta^{2} h^{m}(T) dx =   \sum_{i=1}^{m}  \mu_{i} \mathbb{E} ( c_{i}^{m}(T) )^{2} \geq 0.
\end{equation}
From \cref{eq.lemma41E3}, we get 
\begin{align}\label{eq.lemma41E5_1}
    \mathbb{E} \int_{Q} d h^{m} \Delta^{2} h^{m} dx 
     & \leq 
    -\frac{1}{2}  \mathbb{E} \int_{Q} |\Delta^{2} h^{m} |^{2} dxdt
    + 2 \mathbb{E} \int_{Q} |f|^{2} dxdt
    ,
\\ \notag
\label{eq.lemma41E5_2}
\mathbb{E} \int_{Q}  h^{m} \Delta^{2} (d h^{m} ) dx 
& =
\mathbb{E} \int_{Q} h^{m} ( - \Delta^{4} h^{m} ) dxdt
+ \mathbb{E} \int_{Q} h^{m} ( \Delta^{2} f^{m} ) dxdt
\\ 
& \leq
-\frac{1}{2}  \mathbb{E} \int_{Q} |\Delta^{2} h^{m} |^{2} dxdt
+ 2\mathbb{E} \int_{Q} |f|^{2} dxdt,
\end{align}
and
\begin{align}\label{eq.lemma41E5_3}
    \mathbb{E} \int_{Q}  d h^{m} d (\Delta^{2} h^{m}) dx 
        =   \mathbb{E} \int_{Q} g^{m} \Delta^{2} g^{m}   dxdt
        =   \mathbb{E} \int_{Q}  |\Delta  g^{m}|^{2}   dxdt
    .
\end{align}
Combining \cref{eq.lemma41E4_0,eq.lemma41E4_1,eq.lemma41E5_1,eq.lemma41E5_2,eq.lemma41E5_3}, we have 
\begin{equation}\label{eq.lemma41E6}
    |\Delta^{2} h^{m}|^{2}_{L^{2}_{\mathbb{F}}( 0,T; L^{2}(G) )} 
    \leq 
    4 |f^{m}|^{2} _{L^{2}_{\mathbb{F}}( 0,T; L^{2}(G) )}
    + |\Delta g^{m}|^{2} _{L^{2}_{\mathbb{F}}( 0,T; L^{2}(G) )}.
\end{equation}
From \cref{eq.lemma41E6}, thanks to elliptic regularity theory (see \cite[Chapter 4, Theorem 4.1]{Lions1972b}), we get
\begin{align*}
    |h^{m}|^{2}_{L^{2}_{\mathbb{F}}( 0,T; H^{4}(G) )} \leq C
    |f^{m}|^{2} _{L^{2}_{\mathbb{F}}( 0,T; L^{2}(G) )}
    + C |\Delta g^{m}|^{2} _{L^{2}_{\mathbb{F}}( 0,T; L^{2}(G) )}.
\end{align*}
Therefore $ \{h^{m}\}_{m=1}^{\infty} $  exists a subsequence, which is still denoted by $ \{h^{m}\}_{m=1}^{\infty} $,
weakly convergence to $ \tilde{h} \in L^{2}_{\mathbb{F} } ( 0,T; H^{4}(G) ) $ as $m$ tends to $\infty$. 
On the other hand, thanks to the well-posedness of the  stochastic evolution equations (see
\cite[Theorem 3.13]{Lue2021a}), we know that $\lim\limits_{m\to\infty} |h^{m}- h|_{L^{2}_{\mathbb{F}}( 0,T; L^{2}(G)) } = 0$. 
Hence, $ \tilde{h}=h $ for a.e. $(t,x,\omega)\in [0,T]\times G\times\Omega$ and
\begin{equation*}
    |h|_{L^{2}_{\mathbb{F} } ( 0,T; H^{4}(G) )} \leq \liminf_{m \rightarrow \infty} |h^{m}|_{L^{2}_{\mathbb{F} } ( 0,T; H^{4}(G) )} 
    \leq C (|f|_{L^{2}_{\mathbb{F} } ( 0,T; L^{2}(G) )} + |g|_{L^{2}_{\mathbb{F} } ( 0,T; H^{2}(G) )}).
\end{equation*}
\end{proof}

\begin{proof}[Proof of \cref{lemma.auxiliaryEstimates}]
Thanks to the trace theorem (see \cite[Theorem 9.4]{Lions1972a}) and interpolation inequalities for Sobolev spaces, (see \cite[Theorem 9.1]{Lions1972a}), it holds that 
\begin{equation}\label{eq.lemma2E1}
\begin{gathered}
|\nabla^{2}\varphi |^{2}_{L^{2}(\Gamma)}   \leq
C|\varphi |^{2}_{H^{\frac{5}{2}}(G)}\leq  C
|\varphi|_{H^{2}(G)} |\varphi|_{H^{3}(G)}, \\
    |\nabla^{3}\varphi |^{2}_{L^{2}(\Gamma)}  \leq C|\varphi
|^{2}_{H^{\frac{7}{2}}(G)}\leq C |\varphi|_{H^{3}(G)}
|\varphi|_{H^{4}(G)}.
\end{gathered}
\end{equation}
Hence, combining it with \cref{eq.alphastar}, we have 
\begin{align}\label{eq.lemma2E2}
    \notag
	&\mathbb{E} \int_{\Sigma}  s^{\frac{9}{4}} \lambda^{3} \xi^{\frac{9}{4}}  \theta^{2} |\nabla^{2} \varphi|^{2} d\Gamma dt 
    + \mathbb{E} \int_{\Sigma}  s^{\frac{3}{4}} \lambda \xi^{\frac{3}{4}}  \theta^{2} |\nabla^{3} \varphi|^{2} d\Gamma dt
    \\
    & 
    \leq C \mathbb{E} \int_{Q} s^{6} \lambda^{8} \xi^{6} \theta^{2} \varphi^{2} dxdt
    + C \mathbb{E} \int_{0}^{T} e^{2 s \alpha_{\star}} |\varphi|^{2} _{H^{4}(G)} dt.
\end{align}
Let $ \varphi_{\star}\deq e^{s \alpha_{\star}} \varphi$. 
From \cref{eqauxiliaryEstimatesEq1}, we get 
\begin{equation*}
	\left\{ 
		\begin{alignedat}{2}
			& d \varphi_{\star} +  \Delta^2 \varphi_{\star} d t =  ( e^{s\alpha_{\star}} f + ( e^{s\alpha_{\star}} )_{t} \varphi ) d t+ e^{s\alpha_{\star}} g d W(t) && \quad \text { in } Q, \\
			& \varphi_{\star}= \frac{\partial \varphi_{\star}}{\partial \nu}=0 && \quad \text { on } \Sigma,\\
			& \varphi_{\star}(T, \cdot)=0 && \quad \text { in } G.
		\end{alignedat}
	\right.
\end{equation*}
By means of \cref{lemmaRegularityEstimate} and \cref{eqEstimatesOrder}, for $ s \geq C T^{\frac{1}{2}} $, we have 
\begin{align}
    \label{eq.lemma2E5}
	\notag
	& \mathbb{E} \int_{0}^{T} e^{2 s \alpha_{\star}} |\varphi|^{2} _{H^{4}(G)} dt 
    = |\varphi_{\star}|^{2}_{ L_{\mathbb{F}}^{2}(0, T ; H^{4}(G))} \\\notag
	& \leq C |e^{s\alpha_{\star}} f + ( e^{s\alpha_{\star}} )_{t} \varphi|^{2}_{L_{\mathbb{F}}^{2}(0, T ; L^{2}(G)) } 
    + C |e^{s\alpha_{\star}} g|_{L^{2}_{\mathbb{F}}(0, T; H^{2}(G))}^{2}
    \\
	& \leq C  \mathbb{E} \int_{Q}   \theta^{2} (
        |f|^{2}
        + | \Delta g| ^{2}
        + s^{2} \lambda^{2} \xi^{2}|\nabla g|  ^{2}
        +  s^{4} \lambda^{4} \xi^{4} g ^{2}
     ) dxdt + C  \mathbb{E}\int_{Q}  s^{6} \xi^{6} \theta^{2} \varphi^{2} dxdt
    .
\end{align}
Combining \cref{eq.lemma2E2,eq.lemma2E5}, we complete the proof.
\end{proof}

\end{document}